%
%
%
%
%
%
\documentclass[12pt]{amsart}
\usepackage{amsmath}
\usepackage{amsfonts,amssymb,amsthm, txfonts, pxfonts,amscd}
\usepackage[dvips]{graphicx}
\usepackage{epsfig,subfigure}
\usepackage{geometry,psfrag}

\def\struckint{\mathop{%
\def\mathpalette##1##2{\mathchoice{##1\displaystyle##2}%
  {##1\textstyle##2}{##1\scriptstyle##2}{##1\scriptscriptstyle##2}}%
\mathpalette
{\vbox\bgroup\baselineskip0pt\lineskiplimit-1000pt\lineskip-1000pt
\halign\bgroup\hfill$}
{##$\hfill\cr{\intop}\cr\diagup\cr\egroup\egroup}%
}\limits}

\newcommand{\R}{\mathbb{R}}

\newcommand{\C}{\mathbb{C}}
\newcommand{\Q}{\mathbb{Q}}
\newcommand{\Z}{\mathbb{Z}}

\newcommand{\SU}{\mathbb{S}}

\newcommand{\QQQ}{\mathcal{Q}}

\newcommand{\MMM}{\mathcal{M}}

\newcommand{\OOO}{\mathcal{O}}

\newcommand{\HHH}{\mathcal{H}}

\newcommand{\LLL}{\mathcal{L}}


\newcommand{\SL}{{\rm SL}}
\newcommand{\GL}{{\rm GL}}
\newcommand{\Gal}{{\rm Gal}}

\newcommand{\ol}{\overline}
\newcommand{\tr}{{\rm tr}}
\newcommand{\fix}{{\rm fix}}
\newcommand{\odd}{{\rm odd}}


\newcommand\Sl{\textrm{SL}_2(\R)}

\newcommand{\Aff}{{\rm Aff}}

\newtheorem{Theorem}{Theorem}[section]
\newtheorem{Corollary}[Theorem]{Corollary}
\newtheorem{Proposition}[Theorem]{Proposition}
\newtheorem{Lemma}[Theorem]{Lemma}
\newtheorem{Claim}[Theorem]{Claim}

\theoremstyle{remark}
\newtheorem{Remark}{Remark}[Theorem]

\theoremstyle{definition}
\newtheorem*{Acknowledgments}{Acknowledgments}
\newtheorem*{Reader's guide}{Reader's guide}
\newtheorem{Definition}[Theorem]{Definition}

\begin{document}
\title[Teichm\"uller discs in Q(1,1,1,1)]
{Completely periodic directions and orbit closures of
many pseudo-Anosov Teichmueller discs in Q(1,1,1,1)}

\author{Pascal Hubert, Erwan Lanneau, Martin M\"oller}
\address{
Laboratoire d'Analyse, Topologie et Probabilit\'es (LATP) \newline
Case cour A Facult\'e de Saint J\'er\^ome Avenue Escadrille Normandie-Niemen \newline
13397, Marseille cedex 20, France
}

\email{hubert@cmi.univ-mrs.fr}

\address{
Centre de Physique Th\'eorique (CPT), UMR CNRS 6207 \newline
Universit\'e du Sud Toulon-Var and \newline
F\'ed\'eration de Recherches des Unit\'es de 
Math\'ematiques de Marseille \newline
Luminy, Case 907, F-13288 Marseille Cedex 9, France
}

\email{lanneau@cpt.univ-mrs.fr}

\address{
Max-Planck-Institut f\"ur Mathematik\newline
Postfach 7280\newline
53072 Bonn, Germany
}

\email{moeller@mpim-bonn.mpg.de}

\subjclass[2000]{Primary: 32G15. Secondary: 30F30, 57R30, 37D40}
\keywords{Abelian differential, Veech group, Pseudo-Anosov diffeomorphism,
Teich\-m\"uller disc, orbit closures}
\date{\today}
\begin{abstract}
In this paper, we investigate the closure of a large class of
Teichm\"uller discs in the stratum $\QQQ(1,1,1,1)$ or equivalently, 
in a $\GL^+_2(\R)$-invariant locus $\LLL$ of translation surfaces of
genus three. We describe a systematic way to prove that the 
$\GL^+_2(\R)$-orbit closure of a translation surface in $\LLL$ is the 
whole of $\LLL$. The strategy of the proof is an analysis of 
completely periodic directions on such a surface and an
iterated application of Ratner's theorem to unipotent subgroups acting
on an ``adequate'' splitting.
\par
This analysis applies for example to all Teichm\"uller discs stabilized 
obtained by Thurston's construction with a trace field of degree
three which moreover ``obviously not Veech''.
\par
We produce an infinite series of such examples and show moreover
that the favourable splitting situation does not arise
everywhere on $\LLL$, contrary to the situation in genus two.
\par
We also study completely periodic directions on translation surfaces
in $\LLL$. For instance, we prove that completely periodic directions
are dense on surfaces obtained by Thurston's construction.
\end{abstract}
\setcounter{tocdepth}{1}

\maketitle
\tableofcontents
%

\section*{Introduction}
For translation surfaces in genus two, $\GL_2^+(\R)$-orbit 
closures, completely periodic translation surfaces and many
more properties have been classified by Calta, McMullen and e.g.\ 
\cite{EMS}, \cite{HuLe}.
For half-translation surfaces, i.e.\ for pairs $(X,q)$ of a Riemann
surface and a quadratic differential, as well as for surfaces of genus
$g \geq 3$ the situation is much more complicated. At present, all
classification questions are open. In~\cite{HLM} and in the present paper,
we study translation surfaces belonging to $\LLL$, the hyperelliptic locus of the 
non hyperelliptic connected component of the stratum $\HHH(2,2)$ (the 
moduli space of Abelian differentials having two double zeroes). The locus
$\LLL$ is closed and $\GL_2^+(\R)$-invariant. It is more natural 
to study $\LLL$ than it sounds on a first reading since it is 
$\GL_2^+(\R)$-equivariantly isomorphic to $\QQQ(1,1,1,1)$, the principal
stratum of quadratic differentials in genus $2$. We are thus studying
the next simplest cases besides translation surfaces in genus two.

Projections of $\GL_2^+(\R)$-orbits of translation surfaces
to Teichm\"uller space give rise to 
Teichm\"uller discs. The setwise stabilizer of a Teichm\"uller disc 
in the mapping class group is a subgroup of $\SL_2(\R)$, that
reflects the geometry of the original translation surface. See
Section~\ref{background} for details on the basic notions and
references.
\par
We focus in this paper on Teichm\"uller discs that are stabilized by a
pseudo-Anosov diffeomorphism (pseudo-Anosov Teichm\"uller discs
or pseudo-Anosov translation surfaces, for short). The Arnoux-Yoccoz
example (\cite{Arnoux:Yoccoz}, \cite{Ar2}) is a pseudo-Anosov
Teichm\"uller disc with many very exotic properties.
In \cite{HLM}, we proved that the $\GL_2^+(\R)$-orbit closure of the
Arnoux-Yoccoz Teichm\"uller disc is the whole locus
$\LLL$. Rephrasing the previous result in the language of quadratic
differentials, we proved the existence of  
a pseudo-Anosov Teichm\"uller disc with a dense $\GL_2^+(\R)$-orbit in 
$\QQQ(1,1,1,1)$.  
This is very different from the behavior described by McMullen for
Abelian differentials in genus $2$:  every pseudo-Anosov disc is
contained in the eigenform locus over a Hilbert modular surface.  

In this paper,  we describe a systematic way to prove that 
the $\GL_2^+(\R)$-orbit
closure of a surface in $\LLL$ is the whole locus.  This analysis
applies to a large family of pseudo-Anosov Teichm\"uller discs obtained by
the Thurston (or Thurston-Veech) construction 
(see \cite{Thurston,Vee89}). Recall that a direction is 
completely periodic on a translation surface if the surface is
decomposed into a union of cylinders in this direction.  We have 

\begin{Theorem}
\label{closure} 
Let $(X, \omega)\in\LLL$ be a surface given by Thurston's construction with
trace field of degree $3$. Let us assume that $(X,\omega)$ is not a
Veech surface for the most obvious reason: there exists a completely
periodic direction which is not parabolic. Then
$$
\ol{\GL^+_2(\R)\cdot(X,\omega)} = \LLL.
$$
\end{Theorem}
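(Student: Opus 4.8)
The plan is to feed the hypothesis straight into an iterated unipotent-dynamics argument, in the spirit of the Arnoux--Yoccoz case of \cite{HLM}. By assumption $(X,\omega)$ carries a completely periodic direction $\theta$ that is not parabolic; rotating by an element of $\SL_2(\R)$ we may take $\theta$ horizontal, so $(X,\omega)=C_1\cup\cdots\cup C_k$ is a union of horizontal cylinders. Non-parabolicity says precisely that the moduli $\mu_1,\dots,\mu_k$ of the $C_i$ are not all commensurable --- this is also why the surface is ``obviously not Veech'', since the Veech dichotomy would make every completely periodic direction parabolic. Cutting along the core curves of the $C_i$ and grouping them into their maximal flat subsurfaces gives the ``adequate'' splitting attached to $\theta$. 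The surfaces of $\LLL$ carrying this splitting with fixed combinatorial data form a sub-locus coordinatized in relative periods by the widths $w_i$, the heights $h_i$ (subject to the linear relations from the gluings and from membership in $\LLL$) and the twists $t_i\in\R/w_i\Z$; the twists sweep out a torus $\mathbb T$ through $(X,\omega)$. On this sub-locus the upper triangular subgroup of $\GL^+_2(\R)$ acts: the diagonal part rescales the data, while the horocycle $h_u=\bigl(\begin{smallmatrix}1&u\\0&1\end{smallmatrix}\bigr)$ translates the twist vector by $(t_i)\mapsto(t_i+u\,h_i)$.

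I would first carry out one step of the Ratner argument. The $h_u$-orbit of $(X,\omega)$ is a line of slope $(h_1,\dots,h_k)$, equivalently (after the evident rescaling) of slope $(\mu_1,\dots,\mu_k)$, inside $\mathbb T$; since the $\mu_i$ are not all commensurable, and the splitting is adequate so that $\mathbb T$ has dimension at least $2$, this line is not periodic. Ratner's theorem applied to the unipotent flow $h_u$ acting on the homogeneous structure of the splitting then shows that its closure is a homogeneous sub-locus $\mathbb T'\subseteq\mathbb T$ strictly larger than the orbit. Hence $\ol{\GL^+_2(\R)\cdot(X,\omega)}$ contains $\mathbb T'$, and with it the $\GL^+_2(\R)$-saturation $\MMM_1$ of $\mathbb T'$: a closed $\GL^+_2(\R)$-invariant sub-locus of $\LLL$ strictly bigger than the Teichm\"uller disc we started from.

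Then comes the iteration, which is the real content. On surfaces of $\MMM_1$ one locates a new completely periodic direction transverse to $\theta$, using the density of completely periodic directions on surfaces from Thurston's construction (proved elsewhere in the paper) together with the parabolic directions already furnished by the Thurston data; this gives a new splitting, a new twist-torus and a new unipotent subgroup, and one repeats the previous step (the non-parabolic case verbatim, the parabolic case by combining its horocycle with the diagonal group and the directions already produced). Iterating builds an increasing chain $\MMM_1\subseteq\MMM_2\subseteq\cdots$ of $\GL^+_2(\R)$-invariant sub-loci inside $\ol{\GL^+_2(\R)\cdot(X,\omega)}$. The combinatorial work is to show that the twist-torus tangent spaces harvested along this chain eventually span the whole tangent bundle of $\LLL$; this uses the explicit classification of separatrix diagrams of periodic directions in $\LLL\cong\QQQ(1,1,1,1)$ and the constraints from the hyperelliptic involution, and it is where the degree-$3$ hypothesis is essential --- the non-parabolic periodic direction already excludes $(X,\omega)$ from every Teichm\"uller curve, and the degree-$3$ trace field rules out the remaining low-dimensional $\GL^+_2(\R)$-invariant sub-loci of $\LLL$, whose trace fields have degree at most two, in which the iteration might otherwise stall.

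Once some $\MMM_N$ is relatively open in $\LLL$ I would finish as follows: $\NNN:=\ol{\GL^+_2(\R)\cdot(X,\omega)}$ is closed, $\GL^+_2(\R)$-invariant and contains a nonempty open subset of $\LLL$; since $\GL^+_2(\R)$ acts ergodically on $\LLL\cong\QQQ(1,1,1,1)$ for the Masur--Veech measure, which has full support, that open set has positive measure, hence contains a surface with dense $\GL^+_2(\R)$-orbit, and therefore $\NNN=\LLL$. The main obstacle is this iteration: each Ratner/torus step is routine, but proving that finitely many twist subspaces coming from periodic decompositions span $T\LLL$, and that the process is never trapped in a proper invariant sub-locus, demands a hands-on analysis of cylinder decompositions in $\QQQ(1,1,1,1)$ --- precisely where the degree-$3$ and ``obviously not Veech'' hypotheses are used in an essential way.
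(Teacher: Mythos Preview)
Your argument has the right spirit but a genuine gap in its core mechanism. Applying Ratner to the horocycle flow on the twist torus $\mathbb T$ only gains you a subtorus of twist parameters; even iterated over many periodic directions, these twist subspaces need not span $T\LLL$ --- the widths and heights of cylinders are invariant under every horocycle in a fixed periodic direction, and nothing in your scheme changes them except the diagonal, which only rescales. Your ``iteration'' is therefore a chain of tori sitting inside various cylinder-decomposition loci, and you give no argument (nor is one apparent) that these loci nest and eventually fill $\LLL$. The appeal to a classification of proper $\GL^+_2(\R)$-invariant sub-loci of $\LLL$ to stop the iteration is not available: no such classification exists, and this is exactly the kind of statement one is trying to prove.

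The paper's proof rests on a different and much stronger homogeneous space. The key geometric input, absent from your sketch, is to pass from the given non-parabolic direction to a $2T_\fix 2C$-direction: four homologous saddle connections cutting the surface into two genuine \emph{tori} $T_1,T_2$ (not merely cylinders) and two exchanged cylinders. The unipotent $U$ in that direction then acts diagonally on $(\SL_2(\R)/\SL_2(\Z))^2\times U$, the space of pairs of normalized lattices times a cylinder parameter, and Ratner --- under an irrationality and strong non-commensurability hypothesis on $T_1,T_2$ --- yields the full $\SL_2(\R)^2\times U$-orbit, a $9$-dimensional set inside the $11$-dimensional $\LLL_1$. One explicit resplitting varies the two remaining area ratios, and ergodicity of the Teichm\"uller flow finishes. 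The reduction ``given non-parabolic configuration $\Rightarrow$ some surface in the orbit closure has an irrational $2T_\fix 2C$-direction'' is carried out case by case over the finite list of completely periodic configurations in $\LLL$, which the paper classifies; this is the ``iteration'', and it terminates in a bounded number of explicit steps, not an open-ended chain.

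Finally, the degree-$3$ hypothesis is not used to exclude invariant sub-loci. It enters through a Galois-theoretic computation (orthogonality of the Galois-conjugate eigenspaces of the pseudo-Anosov acting on $H^1$) showing that on such a Thurston surface a non-parabolic completely periodic direction automatically has \emph{pairwise} incommensurable moduli, $m_i/m_j\notin\Q$ for all $i\neq j$; this is precisely the input each case of the reduction needs.
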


Without the trace field condition the statement is false. Counterexamples
are given by translation surfaces that arise as coverings from genus
one or genus two.
\par
Of course, by the ergodicity of the geodesic flow on any stratum, the 
$\GL^+_2(\R)$-orbit closure of a generic surface in $\LLL$ equals $\LLL$.
While pseudo-Anosov Teichm\"uller discs as in the theorem above behave 
like generic surfaces, they have quite remarkable  topological properties.
Without restrictions on the trace field we show:

\begin{Theorem}
\label{complete-periodicity} 
Let $(X, \omega)\in\LLL$ be a surface stabilized by a pseudo-Anosov
diffeomorphism. Let us assume that there exists a completely periodic direction. 
Then the set of completely periodic directions on $(X, \omega)$
is a dense subset of the circle $\SU^1$.
\end{Theorem}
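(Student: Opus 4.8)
The plan is to turn the single completely periodic direction into a dense family, with the pseudo-Anosov serving as a renormalization device, in the spirit of McMullen's analysis of completely periodic surfaces in genus two.

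First I would normalize. After replacing $(X,\omega)$ by an element of its $\GL^+_2(\R)$-orbit (harmless, since the conclusion is $\GL^+_2(\R)$-equivariant), assume the stabilizing pseudo-Anosov $\phi$ has derivative $A=\mathrm{diag}(\lambda,\lambda^{-1})$ with $\lambda>1$, so that its unstable and stable directions are horizontal and vertical. Write $\mathrm{CP}\subset\SU^1$ for the set of completely periodic directions; it is invariant under $\mathrm{SL}(X,\omega)$, in particular under $\langle A\rangle$. I would record two elementary facts. First, the given $\theta_0\in\mathrm{CP}$ is neither horizontal nor vertical, because the invariant foliations of a pseudo-Anosov are minimal, hence have no closed leaf, whereas every leaf in a completely periodic direction is closed or is a saddle connection. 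Second, since $\theta_0$ is not $A$-fixed and $A$ acts on $\SU^1$ with north--south dynamics based at the horizontal and vertical directions, $\{A^n\theta_0:n\in\Z\}\subset\mathrm{CP}$ is infinite and accumulates at both of those directions, so they lie in $\overline{\mathrm{CP}}$. This last observation is \emph{not} density --- a single $A$-orbit, even a Cantor set, is already closed and $A$-invariant --- so it is only a first approximation.

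The heart of the proof is an arithmetic rigidity statement: the existence of $\phi$ should force the completely periodic decomposition in direction $\theta_0$ to be defined over the trace field. Concretely, rotate so $\theta_0$ is horizontal and write $X=C_1\cup\dots\cup C_k$, with cylinder heights $h_i$, circumferences $w_i$, and horizontal saddle-connection lengths $\ell_j$. The first-return map to the union of the horizontal saddle connections, in a direction of slope $\varepsilon$, is an interval exchange assembled from the $\ell_j$, the $w_i$ and the shears $h_i/\varepsilon$; this direction is completely periodic whenever these data are pairwise commensurable (then, up to rescaling, the interval exchange permutes equal subintervals, hence is periodic). Using that $\phi$ is affine and that $(X,\omega)\in\LLL\cong\QQQ(1,1,1,1)$, I would prove that the $w_i,h_i,\ell_j$ --- and more generally the periods of $\omega$ relative to the $\phi$-invariant splitting --- are pairwise commensurable over the trace field $K=\Q(\mathrm{tr}\,A)$; this is the $\LLL$-analogue of McMullen's statement that a periodic decomposition of a genus-two surface produces real multiplication. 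With this in hand one can solve the commensurability condition for a sequence $\varepsilon_m\to0$, so $\theta_0$ is not isolated in $\mathrm{CP}$; more than that, $\mathrm{CP}$ gets identified with the set of ``$K$-rational'' directions of the $\phi$-invariant structure, and since $K$ is dense in $\R$ this set is dense in $\SU^1$. (Transporting by $\langle A\rangle$ and by $\mathrm{SL}(X,\omega)$, and using the geometry of $\LLL$ to rule out a proper closed invariant subset, then confirms $\overline{\mathrm{CP}}=\SU^1$.)

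The main obstacle is precisely this rigidity step. One has to genuinely manufacture new completely periodic directions out of the cylinder data and, crucially, control where they accumulate, with nothing to work from but the bare existence of a pseudo-Anosov; the bookkeeping that produces the trace-field structure, and the verification that it actually characterizes $\mathrm{CP}$, is the delicate part. It is also where the genus-three setting is harder than genus two: as the introduction emphasizes, the favourable splitting need not be available everywhere on $\LLL$, and $K$ may have degree three rather than two, so the real-multiplication-type argument has to be run with more care.
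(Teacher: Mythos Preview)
Your approach has a genuine gap at the ``arithmetic rigidity'' step, and the gap cannot be filled because the conclusion you are aiming for is false in $\LLL$. You claim that once the cylinder data lie in the trace field $K$, ``$\mathrm{CP}$ gets identified with the set of $K$-rational directions''. But on the Arnoux--Yoccoz surface---which lies in $\LLL$, carries a pseudo-Anosov, and has cubic trace field---there is a holonomy-field direction in which the surface splits into two cylinders and two \emph{non-periodic} tori; that direction is not completely periodic. So $\mathrm{CP}$ is strictly smaller than the set of $K$-rational slopes, and no amount of extra care in the real-multiplication-type bookkeeping will rescue the identification. Concretely: knowing $w_i,h_i,\ell_j\in K$ with $[K:\Q]=3$ places the data in a three-dimensional $\Q$-vector space, and the requirement that $\ell_j,w_i,h_i/\varepsilon$ be pairwise $\Q$-commensurable is then wildly overdetermined; there is no mechanism producing solutions for a dense set of slopes $\varepsilon$.

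The paper's proof avoids this by exploiting the hyperelliptic structure of $\LLL$ rather than arithmetic alone. First (Theorem~\ref{periodic}, via Masur's density of periodic directions and a slit-torus lemma) one shows that the directions containing a cylinder \emph{fixed by the hyperelliptic involution} are dense in $\SU^1$. Second, any such direction is completely periodic: by Calta--Smillie the SAF-invariant vanishes in every holonomy direction, hence also on the complement $Y$ of the fixed cylinder; gluing the hyperelliptically paired slits of $Y$ yields a closed surface $\hat Y$ lying in $\HHH(2)$, in $\HHH(1,1)$, or in something even simpler (this uses the configuration list of Section~\ref{configs}); and in those genus-two strata, vanishing SAF together with the available saddle connections forces complete periodicity. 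The reduction to genus two is the idea you are missing: it sidesteps precisely the failure, in degree three, of the Calta--McMullen phenomenon that every holonomy direction is completely periodic.
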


Obviously, the conclusion fails for a generic surface in $\LLL$, see Proposition~\ref{noncpgeneric}.
Note that the last result applies to translation surfaces given by 
Thurston's construction. Combining these last two theorems, one
gets:

\begin{Theorem}
Let $(X, \omega)\in\LLL$ be a surface given by Thurston's construction with
trace field of degree $3$. Then at least one of the following holds:
\begin{enumerate}
\item The closure of the orbit $\GL^+_2(\R)\cdot(X,\omega)$
  is the whole locus $\LLL$.

\item The limit set of the Veech group $\SL(X,\omega)$ is the full circle.
\end{enumerate}
\end{Theorem}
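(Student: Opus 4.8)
The plan is to obtain this statement as a formal consequence of Theorems~\ref{closure} and~\ref{complete-periodicity}, organised around the dichotomy: either $(X,\omega)$ admits a completely periodic direction that is \emph{not} parabolic, or it does not.

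First I would isolate the structural features of a surface $(X,\omega)$ produced by Thurston's construction. It is glued from two transverse families of cylinders, so the two associated coordinate directions are completely periodic; by Thurston's theorem the group generated by the two corresponding multitwists contains a hyperbolic element, so $(X,\omega)$ is stabilized by a pseudo-Anosov diffeomorphism, and this hyperbolic element together with the two non-commuting parabolic multitwists shows that $\SL(X,\omega)$ is a non-elementary Fuchsian group. In particular $(X,\omega)$ satisfies the hypotheses of Theorem~\ref{complete-periodicity}, so the set $\mathcal P$ of completely periodic directions on $(X,\omega)$ is dense in $\SU^1$.

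Now suppose first that some direction in $\mathcal P$ is not parabolic. Then $(X,\omega)$ cannot be a Veech surface, since by the Veech dichotomy every completely periodic direction on a Veech surface is fixed by a parabolic element; thus $(X,\omega)$ is ``not Veech for the most obvious reason'' in the sense of Theorem~\ref{closure}, and since it is moreover given by Thurston's construction with trace field of degree $3$, that theorem yields $\ol{\GL^+_2(\R)\cdot(X,\omega)}=\LLL$, i.e.\ alternative~(1). Suppose instead that every direction in $\mathcal P$ is parabolic, that is, fixed by a parabolic element of $\SL(X,\omega)$. Parabolic fixed points always lie in the limit set $\Lambda$ of a Fuchsian group, so, under the usual identification of the circle of directions with the boundary circle on which $\SL(X,\omega)$ acts, $\mathcal P\subseteq\Lambda$. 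The limit set of a non-elementary Fuchsian group is closed (it is the unique minimal nonempty closed invariant subset of the boundary), and here it contains the dense set $\mathcal P$; hence $\Lambda$ is the whole circle, which is alternative~(2).

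The only inputs that remain to be pinned down are the two standard facts used above: that a surface coming from Thurston's construction really does carry a pseudo-Anosov and at least one completely periodic direction, so that Theorem~\ref{complete-periodicity} applies non-vacuously; and that ``not Veech for the most obvious reason'' in Theorem~\ref{closure} is precisely the existence of a non-parabolic completely periodic direction, which makes the two cases above exhaustive. Once these are in place the argument is the short dichotomy; the only mild point of care is the bookkeeping between the circle $\SU^1$ of directions and the boundary circle of the hyperbolic plane, but density in the former passes to density in the latter, so nothing is lost. I do not expect a genuine obstacle here: the substance of the result lies entirely in Theorems~\ref{closure} and~\ref{complete-periodicity}.
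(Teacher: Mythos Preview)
Your argument is correct and is exactly the dichotomy the paper has in mind when it writes ``Combining these last two theorems, one gets\ldots'': either some completely periodic direction is non-parabolic and Theorem~\ref{closure} gives alternative~(1), or all of the (dense, by Theorem~\ref{complete-periodicity}) completely periodic directions are parabolic fixed points and hence the closed limit set is the whole circle. Your verification that Thurston's construction supplies both a pseudo-Anosov and a completely periodic direction, and your remark on identifying $\SU^1$ with $\partial\mathbb{H}$, are precisely the details the paper leaves implicit.
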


\begin{Remark}
We do not know if there exists a surface which satisfies these two
properties at the same time.
\end{Remark}

Our results apply to an infinite family of surfaces, for instance a series of
surfaces arising from Thurston's construction. \medskip

\begin{Reader's guide}

We end this introduction by explaining the organization of the paper and by
sketching a proof of the main results.
\par
The strategy of the proof of Theorem~\ref{closure} is to use an
``adequate irrational'' splitting on a surface $(Y,\eta)$ belonging to the closure of the
$\Sl$-orbit of $(X,\omega)$. Such a splitting is given by four
homologous saddle connections which decompose the surface into two
fixed tori and two exchanged cylinders. We will call such a
decomposition a $2T_{fix}2C$ splitting. Following the strategy presented
in~\cite{Mc3} and~\cite{HLM}, we conclude by applying Ratner's theorem~\cite{Ra} to
a cyclic unipotent subgroup acting on the product of the space of pairs of area
$1$ lattices and the space of cylinders.
\par
We use the non parabolic completely periodic direction to obtain the
surface $(Y,\eta)$ by the following way. In Section~\ref{configs} we describe all completely
periodic configurations of surfaces in $\LLL$. For each of these
configurations, in Section~\ref{reductionsteps}, we apply Ratner's theorem to
$\left(\Sl/\textrm{SL}_2(\Z)\right)^3$ the space of triples of normalized lattices
in order to get a resplitting of $(X,\omega)$ into $(Y,\eta)$
with an irrational $2T_{fix}2C$. This can only work under some mild
irrationality hypothesis, which will be a consequence of the trace
field condition and the existence of a non-parabolic direction.
\par
We now give a sketch of the proof of
Theorem~\ref{complete-periodicity}. Following Calta-Smillie
\cite{CaSm}, the hypothesis implies that the Sah-Arnoux-Fathi
(SAF)-invariant \cite{Ar1} is zero for every direction of the holonomy field. Given a
direction $\theta$ containing a cylinder fixed by the hyperelliptic
involution, we remove this cylinder from the surface and  obtain a
genus $2$ translation surface $Y$ with boundary. Using the fact that
the SAF-invariant vanishes on $Y$ and the list of periodic configuration given
in Section~\ref{configs}, we prove that the flow is periodic in the direction
$\theta$ on $Y$. 
\par
In Section~\ref{sec:examples}, we check the hypothesis of 
Theorem~\ref{closure} for  an infinite family of examples  
arising from Thurston's construction.
\par
In the last section, we briefly discuss the existence of a
$2T_{fix}2C$ splitting on surfaces belonging to $\LLL$.
\end{Reader's guide}

\begin{Acknowledgments}
We thank C.~McMullen for comments on preliminary versions of this article
and E.~Nipper for a careful reading.
This work was partially supported by the ANR Teichm\"uller ``projet blanc'' ANR-06-BLAN-0038.
\end{Acknowledgments}

\section{Background}
\label{background}

In this section we review basic notions concerning translation
surfaces, trace fields, $J$-invariant and SAF-invariant.

A {\em translation surface} (or {\em flat surface}) is a
Riemann surface $X$ with finitely many {\em singularities} $P_i$, 
plus the choice of charts covering $X \setminus \cup \{P_i\}$, such
that the transition functions are translations. Equivalently, a translation
surface is given by a pair $(X,\omega)$ of a Riemann surface $X$ and 
a holomorphic one-form $\omega$. For {\em half-translation
surfaces} the conditions are relaxed to demanding the transition
functions to be $\pm {\rm id}$ composed with a translation. 
Equivalently, half-translation surfaces are given by a pair $(X,q)$ 
of a Riemann surface $X$ and a quadratic differential $q$.
\medskip

A half-translation surface admits a ramified double covering,
which is a translation surface and except for the introduction
we will work exclusively with translation surfaces.
Translation surfaces correspond bijectively to pairs $(X,\omega)$
of a Riemann surface $X$ and a holomorphic one-form. Similarly,
half-translation surfaces correspond bijectively to pairs $(X,q)$,
where $q$ is a quadratic differential. See e.g.\ \cite{MT} for
a survey on these notions. The singularities correspond to
the zeros of $\omega$ resp.\ of $q$ under this bijection. \medskip

There is a natural action of $\GL_2^+(\R)$ on (half-) translation
surfaces by post-composing the integration charts with the corresponding linear
map. This action respects the number and multiplicities of zeros,
called the signature,
of the one-from (resp.\ the quadratic differential). Consequently
the moduli space of pairs  $(X,\omega)$ (respectively of pairs $(X,q)$),
denoted by $\HHH$ or also by $\Omega M_g$ (respectively $\QQQ M_g$)
is stratified by the signature. The stratum
$\HHH(2,2)$ has two connected components (see \cite{KZ}). One
component is the hyperelliptic component. The locus $\LLL$ is a
codimension 1 subspace of the non hyperelliptic component of the
stratum $\HHH(2,2)$. \medskip

Straight lines in the translation charts are geodesics for
the metric $|\omega|$. A maximal subset of $X$ swept out by parallel 
geodesics is called an (open) {\em cylinder}. Its closure will
be bounded by a finite number of {\em saddle connections}, 
geodesics joining the singularities.
We will say that a cylinder is {\it simple} if each of its boundaries
consists of a unique saddle connection (joining possibly the two
zeroes). A geodesic has a well
defined direction in $\C \cong \R^2$ and the direction of a
cylinder is the direction of any of its geodesics. \medskip

Consider all geodesics in a fixed direction $\theta$. This
direction is called {\em periodic} if there is an open cylinder
in this direction. $\theta$ is called {\em completely
periodic}, if $X$ decomposes completely into cylinders and
saddle connections in this direction.
Note that there is also the notion (\cite{C}) of a {\em completely periodic
surface $(X,\omega)$}, a surface such that each direction that
has a cylinder is automatically completely periodic. This notion
will not be considered in the sequel of this paper. \medskip

A completely periodic direction is called {\em parabolic}, if
the moduli of the cylinders in this direction are commensurable.
Parabolic directions are important, since a composition of
suitable powers of Dehn twists along the cylinders produces
a diffeomorphism which is affine with respect to the charts
given by $\omega$ (\cite{Vee89}). We denote by $\Aff^+(X,\omega)$
the group of orientation-preserving affine diffeomorphisms and
by $\SL(X,\omega)$ its image under the natural map to $\SL_2(\R)$.  
This image is called {\em affine group} or {\em Veech group} of
$(X,\omega)$. It is well-known, that a diffeomorphism in 
$\Aff^+(X,\omega)$ is pseudo-Anosov, if and only if its
image in $\SL(X,\omega)$ is a hyperbolic element of $\SL_2(\R)$. 
The {\em trace field} of $(X,\omega)$ is the field generated over
$\Q$ by the traces of all elements in $\SL(X,\omega)$. One can also
define the trace field by the following way. One defines the {\it
holonomy vectors} to be the integrals of 
$\omega$ along the saddle connections. Let us denote by 
$\Lambda=\Lambda(\omega)$ the subgroup of $\mathbb R^2$ generated
by holonomy vectors $\Lambda = \int_{H_1(X,\mathbb Z)} \omega$. 
If $e_1,e_2 \in \Lambda$ are two nonparallel vectors in $\mathbb
R^2$, one defines the {\it holonomy field} $k$ to be the smallest
subfield of $\mathbb R$ such that every element of
$\Lambda$ may be written as $ae_1+be_2$ with $a,b \in k$. It is known
(\cite{KS} Theorem~28) that if $\SL(X,\omega)$ contains a
psuedo-Anosov diffeomorphism, then the trace field of $(X,\omega)$ 
coincides with $k$. In particular, any direction $\theta$ of saddle connection
belongs to the trace field. \medskip

If $P$ is a polygon in $\R^2$ with vertices $v_1,\dots,v_n$ in
counterclockwise order about the boundary of $P$, then the
$J$-invariant of $P$ is $J(P)=\sum_{i=1}^n v_i \wedge v_{i+1}$ (with
the dummy condition $v_{n+1}=v_1$). Here $\wedge$ is taken to mean
$\wedge_\Q$ and $\R^2$ is viewed as a $\Q$-vector space. $J(P)$ is a
translation invariant (e.g. $J(P+\overrightarrow{v})=J(P)$), thus this
permits to define $J(X,\omega)$ by $\sum_{i=1}^k J(P_i)$ where $P_1 \cup \dots
\cup P_k$ is a cellular decomposition of $(X,\omega)$ into planar
polygons (see~\cite{KS}).
\par
We will also make use of the SAF-invariant of an interval exchange
transformation $f$ as follows. We define a linear projection $J_{xx}:\R^2
\wedge_\Q \R^2 \rightarrow \R \wedge_\Q \R$ by
$$
J_{xx} \left( \left( \begin{smallmatrix} a \\ b\end{smallmatrix}\right)
  \wedge  \left( \begin{smallmatrix} c \\ d\end{smallmatrix}\right)
    \right) = a \wedge c.
$$
If $f$ is an interval exchange transformation induced by the first
return map of the vertical foliation on $(X,\omega)$ (on a transverse
interval $I$) then let us define the SAF-invariant of $f$ by $SAF(f) =
J_{xx}(X,\omega)$. Note that the 
definition does not depend of the choice of $I$ if the interval meets
every vertical leaf (see~\cite{Ar1}). We will also say that $SAF(f)$
is the SAF-invariant of $(X,\omega)$ in the vertical direction. We define in an
obvious manner the SAF-invariant of $(X,\omega)$ in a direction of the trace field.
\par 
If $f$ is a periodic interval exchange transformation then
$SAF(f)=0$. The converse is not true in general. But if $f$ is defined
over $2$ or $3$ intervals and $SAF(f)=0$ then $f$ is periodic.\medskip

We now sketch the Thurston construction (\cite{Thurston}, 
see also \cite{Vee89})
of surfaces\footnote{Also
known as {\em bouillabaisse surfaces}, after a talk by J.Hubbard, CIRM 2003}
with pseudo-Anosov diffeomorphisms. Choose a 
pair $(\{\gamma_i\}_{i\in I}, \{\gamma_j\}_{j\in J})$ 
of multi\-curves, i.e.\ of sets of simple closed curves on $X$ such that 
$X \setminus (\cup_{i\in I} \gamma_i \cup  \cup_{j \in J} \gamma_j)$ is
a union of simply connected regions. Let $M_{r,s} = (\gamma_r, \gamma_s)$
for $r,s \in I \cup J$ be the symmetric intersection matrix.
Choose integer weigts $m_r$ for $r \in I \cup J$ and let
$(h_r)$ be the unique (by Perron-Frobenius) positive eigenvector
satisfying
$$ \mu h_r = \sum_{s \in I \cup J} m_r M_{rs} h_s.$$
Now glue rectangles $[0,h_r] \times [0,h_s]$ according
to the intersection pattern of the multicurves $\gamma_i$, $i\in I$,
and $\gamma_j$, $j \in J$,  to
obtain a closed surface (with cone-type singularities arising
from the corners).
\par
Suitable powers of Dehn twist along the curves $\gamma_i$, $i\in I$ 
(resp.\ along the curves $\gamma_j$, $j \in J$) define two non-commuting 
parabolic elements
in $\SL(X,\omega)$. Suitable products of them give hyperbolic
elements in $\SL(X,\omega)$, thus the corresponding diffeomorphism
is a pseudo-Anosov diffeomorphism. One of the main results of \cite{HuLa} shows,
that not all pseudo-Anosov diffeomorphisms arise in this way.

\section{Configurations}
\label{configs}

In this section, we classify (topologically) the configurations of
completely periodic directions $\theta$ on a translation surface belonging to
the hyperelliptic locus~$\LLL$.

\subsection{Statement of the result}

Cut $X$ along the set of saddle connections in the direction $\theta$. 
The result is a set of cylinders. To reconstruct the surface $(X,\omega)$, 
we have to glue these cylinders according to a combinatorics which encode which
part of a cylinder is glued to another part of a cylinder. We will
call such a combinatorics by a {\it configuration}. For instance, in
Figure~\ref{cap:conf:list}a, the configuration presented has $3$ cylinders (for
the vertical direction $\theta=\pi/2$). One cylinder is fixed by the involution (we
have represented the Weierstrass points by small bullets) and the two
others are exchanged. We label the intervals representing the same
saddle connection on the surface by the same number. Horizontal saddle connections
are identified by vertical translation. A simple computation, using the 
Euler characteristic, shows that the number of cylinders is bounded
above by $4$. In this section we will prove the following result.

\begin{Theorem}
\label{cylclassH}
Let $(X,\omega)$ be a translation surface in the hyperelliptic locus
$\LLL$. Let us assume that the vertical direction is completely
periodic. Then all possible configurations of cylinders are prescribed by 
Figure~\ref{cap:conf:list}. More precisely:

\begin{enumerate}

\item There are four configurations with two cylinders:
\begin{itemize}
\item One configuration with two exchanged cylinders
corresponding to Figure~\ref{cap:conf:list}h.

\item Three configurations with two fixed cylinders
  corresponding to Figure~\ref{cap:conf:list}i, 
  Figure~\ref{cap:conf:list}j and Figure~\ref{cap:conf:list}k.

\end{itemize}

\item There are four configurations with three cylinders:
\begin{itemize}
\item Two configurations with one fixed cylinder
  corresponding to Figure~\ref{cap:conf:list}a and
  Figure~\ref{cap:conf:list}b.

\item Two configurations with three fixed cylinders
  corresponding to Figure~\ref{cap:conf:list}c and 
  Figure~\ref{cap:conf:list}d.

\end{itemize}

\item There are three configurations with four cylinders:
\begin{itemize}
\item Two configurations with two simple cylinders
corresponding to Figure~\ref{cap:conf:list}e and 
  Figure~\ref{cap:conf:list}f.

\item One configuration with one simple cylinder
  corresponding to Figure~\ref{cap:conf:list}g.

\end{itemize}
\end{enumerate}
\end{Theorem}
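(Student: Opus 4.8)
The plan is a finite combinatorial classification, carried out in three stages. The first stage pins down the saddle connections and cylinders. Each of the two zeros has order $2$, hence cone angle $6\pi$, so it is an endpoint of exactly six separatrices in the periodic direction $\theta$; since $\theta$ is completely periodic every separatrix is one half of a saddle connection, so there are exactly $s=6$ saddle connections in this direction. Their union $G$ is a graph with $r=2$ vertices (the two zeros) and $6$ edges, and $X\setminus G$ is a disjoint union of $n$ open cylinders. Thickening $G$ to a ribbon neighbourhood $N(G)$ and regluing the cylinders — each glued to exactly two boundary circles of $N(G)$, all of which get used — gives $n=(s-r)/2+\kappa-\sum_j g_j$, where $\kappa\le r=2$ is the number of connected components of $G$ and $g_j\ge 0$ the ribbon genus of the $j$-th component; in particular $n\le 4$, as noted above.

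The second stage exploits the hyperelliptic involution $\tau$. It satisfies $\tau^*\omega=-\omega$, has the eight Weierstrass points as fixed set, and — because $\LLL$ is the hyperelliptic locus of the non-hyperelliptic component, so the two zeros of $\omega$ are fixed by $\tau$ (were they exchanged we would be in the hyperelliptic component $\HHH(2,2)^{\mathrm{hyp}}$) — it maps each zero to itself. Hence $\tau$ permutes the $n$ cylinders preserving circumferences: a $\tau$-invariant cylinder carries exactly two Weierstrass points, the fixed points of the central symmetry $\tau|_C$, both lying on its core curve, and the rest come in exchanged pairs. Likewise $\tau$ permutes the six saddle connections; since it fixes both zeros it cannot exchange the two endpoints of a saddle connection joining distinct zeros, so a $\tau$-invariant saddle connection must be a loop at a single zero whose two prongs are opposite in the cone (here $\tau$ acts at each zero as rotation by $3\pi$), and it carries a single Weierstrass point, its midpoint; the others come in exchanged pairs. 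Writing $f$ for the number of $\tau$-invariant cylinders and $c$ for the number of exchanged pairs, counting the six non-zero Weierstrass points gives $6=2f+(\#\,\tau\text{-invariant saddle connections})$, and counting saddle connections gives $6=(\#\,\tau\text{-invariant})+2(\#\text{ exchanged pairs})$; subtracting, the number of exchanged pairs of saddle connections equals $f$, the number of $\tau$-invariant ones equals $6-2f$, so $0\le f\le 3$, while $n=f+2c\le 4$. Only the finitely many triples $(n,f,c)\in\{(1,1,0),(2,0,1),(2,2,0),(3,1,1),(3,3,0),(4,0,2),(4,2,1)\}$ remain.

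The third stage is the enumeration. For each surviving triple I would list all $\tau$-equivariant separatrix diagrams — the cyclic words in the six saddle connections bounding the $2n$ cylinder ends, together with the prescribed action of $\tau$ on prongs and on cylinders — up to relabelling and the obvious symmetries, and then test each candidate for realisability by a surface in $\LLL$: the diagram must glue to a closed genus-$3$ surface with its two double zeros at Weierstrass points, the induced involution must be the hyperelliptic involution, and the spin parity of the glued surface must be odd so that it lies in the non-hyperelliptic component. The diagrams that pass these tests should be exactly the eleven pictured in Figure~\ref{cap:conf:list}; in particular the one-cylinder triple $(1,1,0)$ and the triple $(4,0,2)$ produce no surface in $\LLL$.

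I expect the genuine work — and the step most prone to error — to be this last one: organising the enumeration so that no diagram is omitted and none is counted twice (two diagrams related by a relabelling, or by the reflection/rotation symmetries of the picture, give the same configuration), and running the hyperellipticity and spin-parity checks that weed out the spurious diagrams. The earlier steps (the saddle-connection count, the Euler characteristic bound, and the Weierstrass-point bookkeeping forcing $0\le f\le 3$) are routine.
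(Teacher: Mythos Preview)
Your plan is sound and follows essentially the same route as the paper: both arguments rest on the Weierstrass-point bookkeeping under the hyperelliptic involution $\tau$ (which fixes both zeros), the count of six saddle connections, and a case-by-case enumeration. Your stage~2 is in fact a cleaner packaging of what the paper records as its ``obvious'' Lemma~\ref{cor:topology}: your identity $6=2f+\#\{\tau\text{-invariant saddle connections}\}$ together with $\#\{\text{exchanged pairs}\}=f$ recovers the paper's constraints, and your verification that $\tau$ acts by rotation $3\pi$ at each cone point is correct and makes the ``$\tau$-invariant saddle connections are loops'' step transparent.

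Two remarks on organization. First, the paper structures its stage~3 around the number of \emph{simple} cylinders rather than abstract separatrix diagrams; e.g.\ in the four-cylinder case it argues directly that exactly two cylinders are fixed (your $(4,0,2)$ is eliminated there by observing that, with no fixed cylinder, all six Weierstrass points off the zeros would have to sit on saddle-connection midpoints, which forces a contradiction with simple-cylinder boundaries) and that there are one or two simple cylinders, then reads off Figures~\ref{cap:conf:list}e--g. Your separatrix-diagram enumeration would reach the same endpoint, but the paper's simple-cylinder bookkeeping gives shorter ad~hoc eliminations in each case. Second, your spin-parity test is redundant: once the diagram realises a genus-three hyperelliptic surface whose involution \emph{fixes} the two double zeros, the surface cannot lie in $\HHH(2,2)^{\rm hyp}$ (where the involution swaps them), hence lies in the unique other component and thus in $\LLL$. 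So you may drop that check.

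Both your proposal and the paper leave the actual enumeration at the level of ``now list and check''; the paper carries out the four- and three-cylinder cases with short Claims and leaves two cylinders as an exercise, while you defer everything to stage~3. There is no missing idea.
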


\begin{samepage}
\begin{figure}[htbp]
\begin{center}
\psfrag{1}{$\scriptstyle 1$} \psfrag{2}{$\scriptstyle 2$}
\psfrag{3}{$\scriptstyle 3$} \psfrag{4}{$\scriptstyle 4$}
\psfrag{5}{$\scriptstyle 5$} 
  \subfigure[]{\epsfig{figure=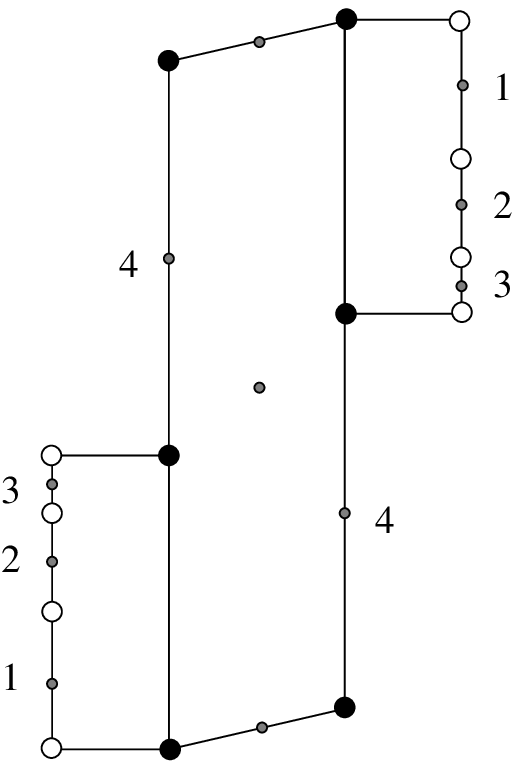,width=2.7cm}} \qquad \qquad
  \subfigure[]{\epsfig{figure=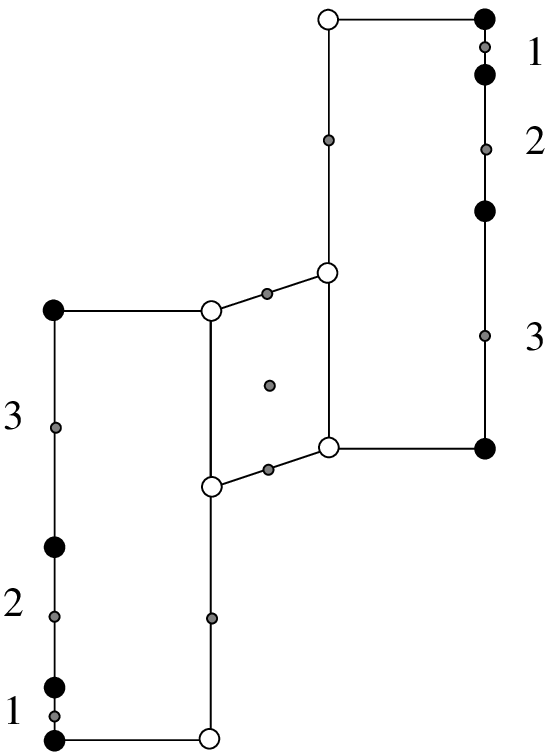,width=2.7cm}}\qquad \qquad
  \subfigure[$\ 3C_\fix$]{\epsfig{figure=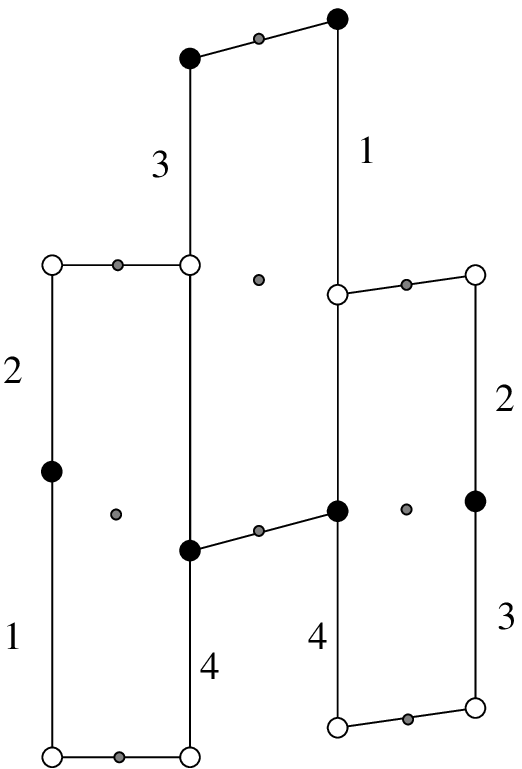,width=2.7cm}}\qquad \qquad
   \subfigure[\ $3C$]{\epsfig{figure=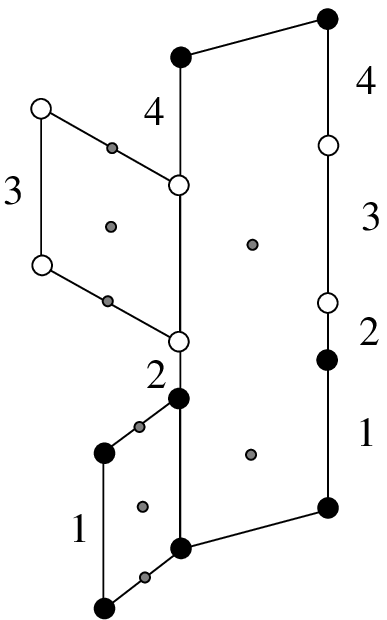,width=1.8cm}}

  \subfigure[$\ 2T_\fix2C$]{\epsfig{figure=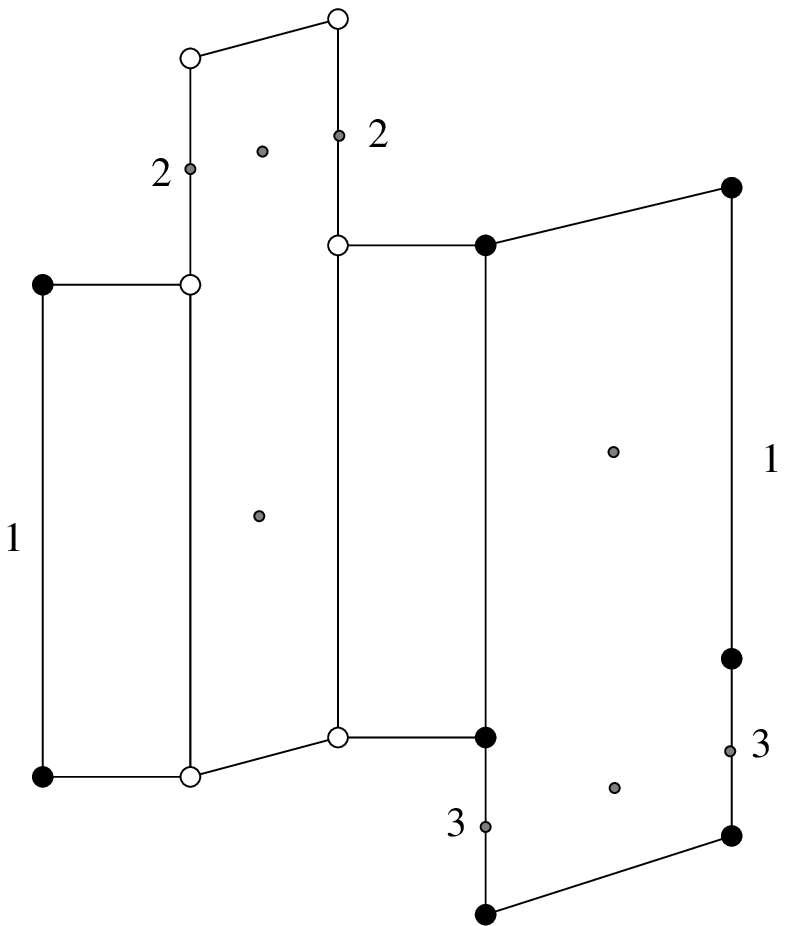,width=3.2cm}}\qquad \qquad
  \subfigure[$\ 2T2C_\fix$]{\epsfig{figure=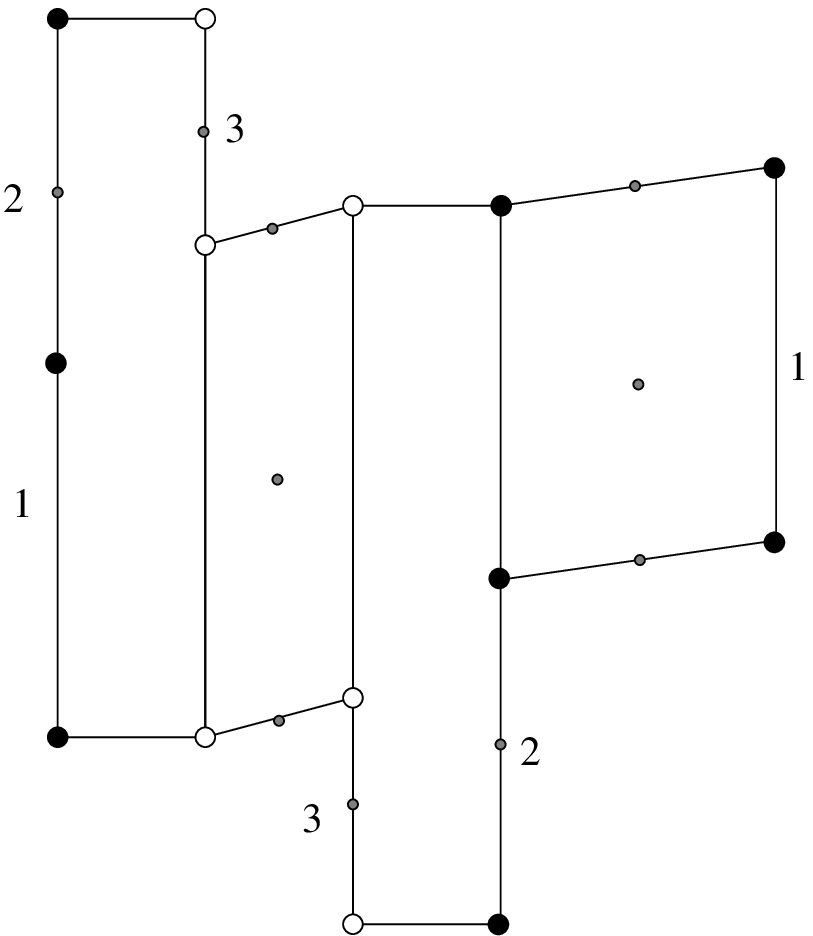,width=3cm}}\qquad \qquad
  \subfigure[]{\epsfig{figure=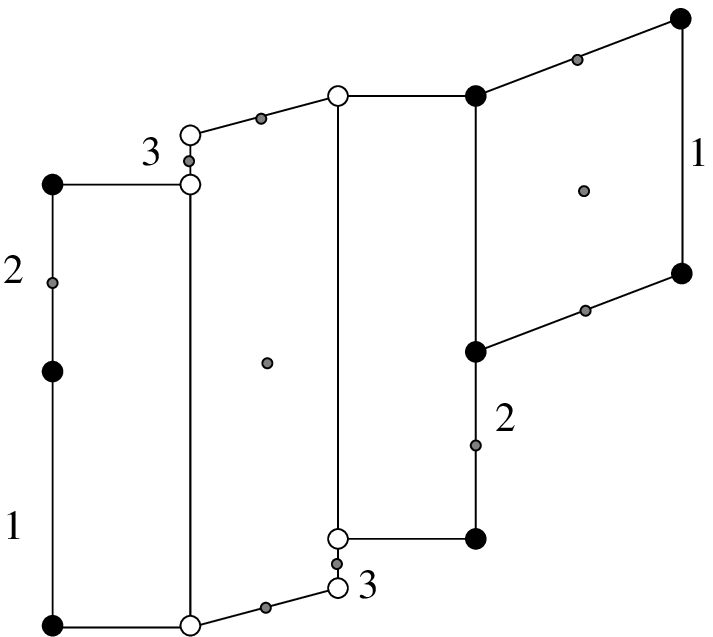,width=3cm}}

  \subfigure[]{\epsfig{figure=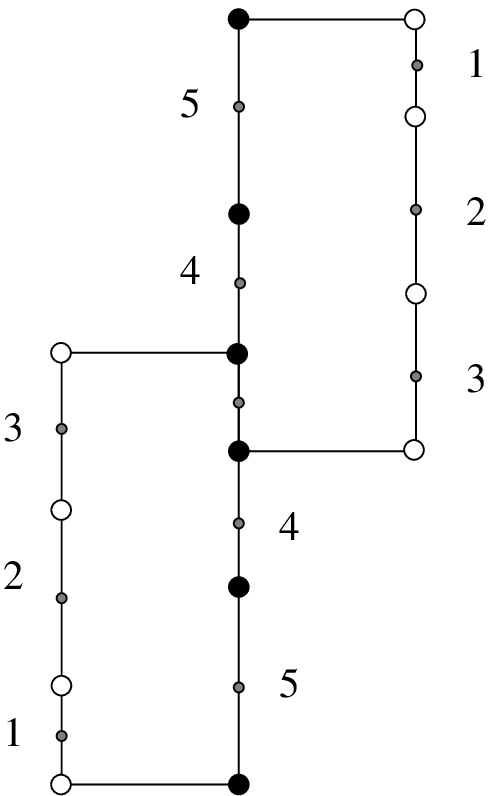,width=2.4cm}}\qquad \qquad
  \subfigure[]{\epsfig{figure=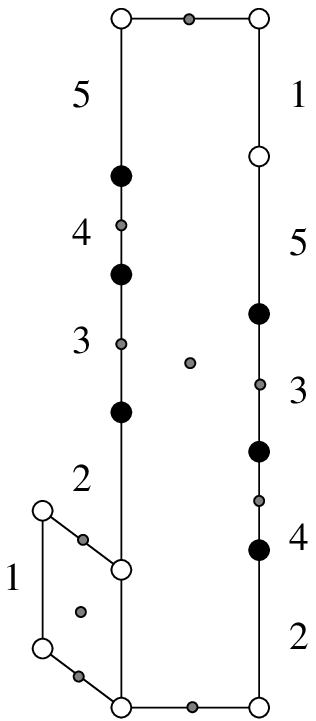,width=1.9cm}}\qquad \qquad
  \subfigure[]{\epsfig{figure=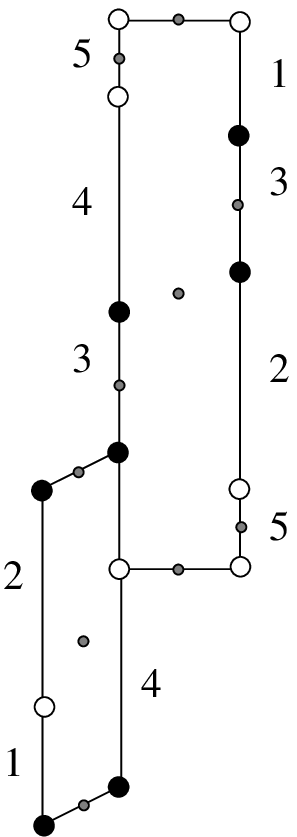,width=1.9cm}}\qquad \qquad
  \subfigure[]{\epsfig{figure=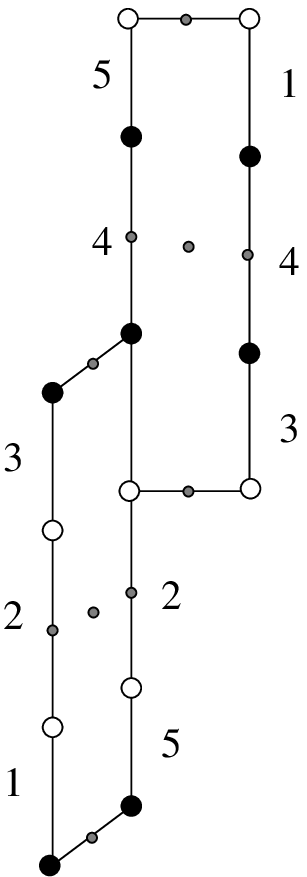,width=1.9cm}}
\end{center}
\caption{
List of (topological) configurations of completely periodic surfaces
in~$\LLL$ (for the vertical direction).
}\label{cap:conf:list}
\end{figure}
\end{samepage}

n\begin{Remark}
If $(X,\omega)$ admits a one cylinder decomposition then
Thurston's construction implies that either
$\textrm{SL}(X,\omega)$ is cyclic, generated by a parabolic element,
or $\textrm{SL}(X,\omega)$ is commensurable to $\textrm{SL}_2(\Z)$,
e.g. $(X,\omega)$ is a covering of at orus ramified at over point (see
Lemma~\ref{lm:bound:cylinders}).
\end{Remark}

We will use the following obvious lemma.

\begin{Lemma}
\label{cor:topology}
Let $(X,\omega)\in \LLL$ be a translation surface and let us assume
that the vertical direction is completely periodic direction. Then:

\begin{enumerate}

\item There exist $6$ saddle connections joining a zero to another
zero.

\item A Weierstrass point is located either on the middle of a saddle
joining a zero to itself or on the core of a cylinder. In the
last case there are exactly two Weierstrass points on the core curve.

\item
\label{cor:topology:item1}
If no cylinder is fixed by the involution then there is no saddle
connection connecting the two zeroes.

\item
\label{cor:topology:item2}
If exactly one cylinder is fixed by the involution then there is at most one
saddle connection connecting the two zeroes. 

\end{enumerate}
\end{Lemma}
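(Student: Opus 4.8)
The plan is to exploit the involution $\tau$ of $(X,\omega)\in\LLL$ with $\tau^{\ast}\omega=-\omega$. In the flat charts $\tau$ is affine with derivative $-\mathrm{id}$, so it preserves every direction; in particular it carries the cylinder decomposition in the (completely periodic) vertical direction to itself, permuting its cylinders and its vertical saddle connections. It fixes each of the two zeros $P_{1},P_{2}$, and its fixed-point set consists of $P_{1}$, $P_{2}$ and six further points (the Weierstrass points). Item $(1)$ is then immediate: a zero of order $2$ has cone angle $6\pi$, so $2(2+1)=6$ vertical separatrices issue from it; since the vertical direction is completely periodic no vertical leaf is recurrent, so each of these separatrix germs is one end of a saddle connection between zeros, and the $2\cdot 6$ germs at the two zeros are matched in pairs by the saddle connections, giving exactly $6$ of them.

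For item $(2)$, let $W$ be a Weierstrass point. It is a regular point of $\omega$, hence lies on a unique maximal vertical leaf $L$, which by complete periodicity is either a closed regular leaf or the interior of a saddle connection. If $L$ is closed, $W$ lies in the interior of a cylinder $C$; since $\tau(W)=W$ and $\tau$ respects the vertical foliation, $\tau(C)=C$, and in suitable cylinder coordinates the only affine involution of $C$ with derivative $-\mathrm{id}$ has the form $(x,y)\mapsto(w-x,b-y)$, whose fixed locus is exactly two points, both on the core curve $\{x=w/2\}$; thus $W$ lies on the core, and exactly two Weierstrass points lie there. If instead $L$ is the interior of a saddle connection $s$, then, since through the regular point $W$ there is only one vertical leaf, $\tau(s)=s$; hence $\tau|_{s}$ is the orientation-reversing isometry of the segment $s$, i.e.\ the reflection about its midpoint, so $W$ is that midpoint and $\tau$ exchanges the two endpoints of $s$. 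As $\tau$ fixes $P_{1}$ and $P_{2}$, these endpoints coincide, so $s$ joins a zero to itself.

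For items $(3)$ and $(4)$ I would first observe that no saddle connection $d$ joining $P_{1}$ and $P_{2}$ can be $\tau$-invariant, for then $\tau|_{d}$ would fix both endpoints and hence be the identity, contradicting $d\tau=-\mathrm{id}$; such saddle connections therefore come in $\tau$-orbits of size two, so their number is even. By item $(2)$, each Weierstrass point lies either on the core of a $\tau$-invariant cylinder (exactly two per such cylinder) or at the midpoint of a $\tau$-invariant loop (exactly one per such loop), so that $2a+b=6$, where $a$ and $b$ denote the numbers of $\tau$-invariant cylinders and of $\tau$-invariant loops. If $a=0$ then $b=6$, so all six saddle connections are $\tau$-invariant loops and none joins the two zeros: this is $(3)$. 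If $a=1$ then $b=4$, so four saddle connections are $\tau$-invariant loops and the remaining two form a single $\tau$-orbit; being a $\tau$-orbit of size two they are either a pair of loops --- in which case no saddle connection joins $P_{1}$ and $P_{2}$ --- or a pair of saddle connections joining $P_{1}$ and $P_{2}$; excluding the latter gives $(4)$, the bound ``at most one'' being automatic since the number of such saddle connections is even.

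I expect everything above to be routine except the exclusion of the last case in $(4)$. There one must bring in the cyclic (ribbon) structure of the vertical separatrix diagram: the six prongs around each of $P_{1},P_{2}$ in their cyclic order on the cone, the half-turn action of $\tau$ on them (which interchanges the ``upward'' and ``downward'' prongs and, on a $\tau$-invariant loop, pairs its two ends into a $\tau$-orbit), the way the four $\tau$-invariant loops and the boundary of the unique $\tau$-invariant cylinder use up these prongs, and the constraint that each complementary region be an annulus. Running through the finitely many resulting diagrams, one checks that a $\tau$-orbit of saddle connections joining $P_{1}$ and $P_{2}$ cannot be inserted --- equivalently, that such an orbit would cut off a second $\tau$-invariant cylinder, contradicting $a=1$. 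This ribbon-graph bookkeeping is the only real work.
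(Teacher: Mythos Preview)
The paper offers no proof of this lemma; it is introduced with ``We will use the following obvious lemma'' and nothing more. Your write-up is therefore already more than the paper supplies. For items (1)--(3) it is correct: the separatrix count at a double zero, the location of fixed points of $\tau$ on a vertical leaf, and the bookkeeping identity $2a+b=6$ (with $a$ the number of $\tau$-fixed cylinders and $b$ the number of $\tau$-invariant saddle connections, each carrying exactly one non-singular Weierstrass point at its midpoint) are all sound, and item (3) drops out from $a=0\Rightarrow b=6$.

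For item (4) there is a subtlety. Your parity observation --- a vertical saddle connection joining the two distinct fixed zeros cannot be $\tau$-invariant --- is correct, so such connections come in $\tau$-pairs. Combined with $a=1\Rightarrow b=4$, this already yields ``at most two'' and shows that ``at most one'' is equivalent to ``none''. You then assert that the residual case of exactly two is excluded by a ribbon-graph enumeration (cyclic orders of the six prongs at each zero, $\tau$ acting as the half-turn, two invariant loops at each zero plus one $P_1$--$P_2$ pair), and that such a pair would force a second $\tau$-invariant cylinder; but you do not carry this out. That is the one genuine gap: the check is finite and your outline is reasonable, but as written item (4) is not proved precisely at the step you yourself call ``the only real work''. (It is also possible that the authors' ``at most one'' is a slip for ``at most two'' --- the lemma is invoked only informally in the subsequent case analysis --- in which case your counting argument alone already suffices and the ribbon-graph step is unnecessary.)
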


We will now give a proof of the main theorem of this section. 

\subsection{Proof of Theorem~\ref{cylclassH} in case of four cylinders}

Let $(X,\omega)$ be a translation surface and let us assume that the
vertical direction is completely periodic and decomposes the surface
in four metric cylinders. Clearly the number of fixed
cylinders (with respect to the hyperelliptic involution) is $0$ or
$2$. Indeed if all the cylinders are fixed then each cylinder will 
possess two fixed points (for the involution). Recall that the two
zeroes are fixed, thus the hyperelliptic involution should possess
$4\times 2+2=10$ fixed points which is impossible. \medskip

\begin{Claim}
There are exactly two fixed cylinders.
\end{Claim}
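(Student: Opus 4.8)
The plan is to show that a four-cylinder completely periodic decomposition of a surface in $\LLL$ must have exactly two cylinders fixed by the hyperelliptic involution $\sigma$, ruling out the only other numerically conceivable case, namely that \emph{no} cylinder is fixed. We already know from the preceding paragraph that the number of fixed cylinders is $0$ or $2$: if all four were fixed we would need $4\times 2+2=10$ Weierstrass points, which is too many (a genus three hyperelliptic surface has $8$), and if exactly two cylinders were swapped while two were fixed we would be in the claimed case. So the whole content of the claim is to eliminate the "$0$ fixed cylinders" configuration.

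Assume toward a contradiction that $\sigma$ fixes no cylinder. Then $\sigma$ permutes the four cylinders, and since an involution has no $3$- or $4$-cycles and no fixed points among the cylinders, it must act as a product of two transpositions: the cylinders come in two swapped pairs $\{C_1,C_1'\}$ and $\{C_2,C_2'\}$. The key structural input is part~\eqref{cor:topology:item1} of Lemma~\ref{cor:topology}: with no fixed cylinder there is no saddle connection joining the two zeroes, so every one of the $6$ saddle connections in the vertical direction (Lemma~\ref{cor:topology}(1) gives exactly $6$ such saddle connections joining a zero to another zero — read in context as the saddle connections of the decomposition) joins a zero to itself. Next I would locate the $8$ Weierstrass points: by Lemma~\ref{cor:topology}(2) each is either the midpoint of a saddle connection joining a zero to itself, or lies on a core curve of a cylinder (two per such cylinder). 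Since $\sigma$ swaps $C_1\leftrightarrow C_1'$ and $C_2\leftrightarrow C_2'$, no cylinder's core curve is invariant, so \emph{no} Weierstrass point lies on a core curve; hence all $8$ Weierstrass points are midpoints of saddle connections joining a zero to itself. But there are only $6$ such saddle connections, giving at most $6$ Weierstrass points — a contradiction.

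The step I expect to be the main obstacle is pinning down precisely which saddle connections $\sigma$ can fix and hence which can carry a Weierstrass point. One must be careful that $\sigma$ acts on the vertical saddle connections and that a saddle connection joining a zero to itself carries a Weierstrass point at its midpoint exactly when it is $\sigma$-invariant (reversing its orientation); a $\sigma$-invariant cylinder likewise accounts for two Weierstrass points on its core. Since in the no-fixed-cylinder case the two swapped pairs of cylinders force $\sigma$ to act without fixing core curves, and the count of zero-to-itself saddle connections is bounded by $6$ while we need to house $8$ Weierstrass points, the arithmetic closes. I would phrase this as: the $8=2g+2$ Weierstrass points must all sit at midpoints of the $\leq 6$ available self-saddle-connections, which is impossible, so at least two cylinders are fixed; combined with the parity argument already given this forces exactly two fixed cylinders.

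\begin{proof}
By the discussion above, the number of $\sigma$-fixed cylinders among the four is $0$ or $2$, and it remains to exclude $0$. Suppose no cylinder is fixed by $\sigma$. Then $\sigma$ acts on the four cylinders as a fixed-point-free involution, hence as two transpositions; in particular no core curve of a cylinder is $\sigma$-invariant, so by Lemma~\ref{cor:topology}(2) no Weierstrass point lies on a core curve. Therefore each of the $8$ Weierstrass points of $(X,\omega)$ is the midpoint of a saddle connection (in the vertical direction) joining a zero to itself. By Lemma~\ref{cor:topology}(1) there are exactly $6$ saddle connections joining a zero to another zero, and by Lemma~\ref{cor:topology}\eqref{cor:topology:item1}, since no cylinder is fixed, none of them joins the two distinct zeroes; so at most $6$ saddle connections join a zero to itself, whence at most $6$ Weierstrass points. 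This contradicts $8=2g+2$, so at least two cylinders are fixed, and hence exactly two.
\end{proof}
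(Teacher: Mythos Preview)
Your counting argument has a gap. You claim that all $8$ Weierstrass points must lie at midpoints of vertical saddle connections, but this overlooks that the two zeroes themselves are Weierstrass points (this is stated explicitly in the paragraph preceding the Claim: ``Recall that the two zeroes are fixed''). Hence only $6$ Weierstrass points need to be housed at midpoints of self-saddle-connections. Since there are exactly $6$ such saddle connections, the inequality you obtain is $6 \le 6$, which is no contradiction.

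The paper's proof therefore proceeds differently. After observing (as you do) that the $6$ non-zero Weierstrass points must sit at midpoints of saddle connections joining a zero to itself, it makes the additional observation that a Weierstrass point cannot lie on the boundary of a \emph{simple} cylinder (if the midpoint of the unique boundary saddle connection were fixed, $\sigma$ would act as rotation by $\pi$ there and would have to carry the simple cylinder to the cylinder on the other side, forcing constraints that are incompatible with the configuration). This rules out simple cylinders entirely, leaving only the configuration of four cylinders each bounded by three saddle connections; a short combinatorial check then shows that no such gluing pattern admits an involution with the required $8$ fixed points. To repair your argument you would need to show that strictly fewer than $6$ of the saddle connections can be $\sigma$-invariant when no cylinder is fixed, which is essentially what the paper's case analysis accomplishes.
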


\begin{proof}[Proof of the Claim]
Assume not. Then from above discussion, there is no fixed
cylinder. Thus all the Weierstrass points (except for the two
singularities) are located on the middle of the saddle connection. In
particular, there is no saddle connection connecting the two zeroes.

Note that a Weierstrass point cannot be located on the boundary of a
simple cylinder. Therefore there is no simple cylinder on such a
configuration. Thus the only possible configuration is the following
one: four cylinders, each of them having three saddle connections on
its boundaries. It is then not hard to check that there is no possible
configuration with $8$ fixed points for the involution. The claim is
proven.
\end{proof}

Thus, a configuration with four cylinders must have exactly two fixed
cylinders.

\begin{Claim}
The number of simple cylinders is $1$ or $2$.
\end{Claim}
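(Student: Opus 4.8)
The plan is to combine the two preceding claims of this subsection with the topological constraints from Lemma~\ref{cor:topology} and a direct enumeration. We are in the situation of four metric cylinders in the vertical direction, we already know that exactly two of them are fixed by the hyperelliptic involution, and we want to show that the number of \emph{simple} cylinders (those whose each boundary is a single saddle connection) is either $1$ or $2$, and moreover that only the configurations \ref{cap:conf:list}e, \ref{cap:conf:list}f, \ref{cap:conf:list}g occur. First I would recall the global count: by Lemma~\ref{cor:topology}(1) there are exactly six saddle connections joining a zero to another zero, and these six saddle connections are precisely the pieces of the horizontal boundaries of the four cylinders; since each cylinder contributes at least one saddle connection on the top and at least one on the bottom, each cylinder boundary uses between $1$ and a small bounded number of these six. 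The total boundary length count is $\sum_{k=1}^{4}(t_k + b_k) = 12$ (top plus bottom multiplicities summing over cylinders, each saddle connection appearing twice), where $t_k, b_k \ge 1$; equivalently $\sum (t_k+b_k) = 12$ with each summand $\ge 2$, so the excess over the minimal value $8$ is $4$, to be distributed among the eight numbers $t_k, b_k$.

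Next I would locate the Weierstrass points. There are $8$ Weierstrass points other than the two zeros. On a fixed cylinder the involution acts as a central symmetry, contributing exactly two Weierstrass points on its core curve (Lemma~\ref{cor:topology}(2)); the two fixed cylinders thus account for $4$ of the $8$ points. The remaining $4$ Weierstrass points must sit at midpoints of saddle connections joining a zero to itself. But a saddle connection that appears on the boundary of a simple cylinder joins a zero to itself only in degenerate ways — more to the point, as already observed in the proof of the first Claim, \emph{a Weierstrass point cannot lie on the boundary of a simple cylinder}, because the involution would then have to fix that boundary saddle connection setwise while also exchanging the two sides of the simple cylinder, which is incompatible. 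So the simple cylinders, being necessarily among the two exchanged (non-fixed) cylinders, impose constraints: if a simple cylinder is one of the two exchanged cylinders, its partner is simple too, so simple cylinders come in the exchanged pair or are among the fixed ones — and a fixed simple cylinder is impossible since its core carries two Weierstrass points but a simple cylinder cannot carry Weierstrass data consistently with the central symmetry flipping its single top and single bottom saddle connection. I would make this dichotomy precise and conclude that the simple cylinders, if any, are exactly the pair of exchanged cylinders (giving $2$) or, if the exchanged cylinders are not simple, there can be at most one simple cylinder among — no: I would instead argue directly from the saddle-connection count that at least one cylinder must be simple and at most two can be.

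The enumeration step then proceeds by casework on how the four remaining midpoint-Weierstrass points are distributed among the at most six saddle connections, combined with the planarity/gluing pattern forcing the cylinder adjacency graph. The case of two exchanged simple cylinders plus two fixed cylinders yields the two configurations \ref{cap:conf:list}e and \ref{cap:conf:list}f (they differ in how the two fixed cylinders are cyclically arranged relative to the exchanged pair), and the case of exactly one simple cylinder yields \ref{cap:conf:list}g; the case of zero simple cylinders is excluded exactly as in the first Claim by the $10$-fixed-points contradiction, and the case of more than two simple cylinders overshoots the saddle-connection budget. The main obstacle I anticipate is the bookkeeping of the last step: making the passage from "the numerical data $(t_k,b_k)$ and the Weierstrass point placements are such-and-such" to "the gluing combinatorics is uniquely one of the listed figures" genuinely rigorous, rather than hand-waving "it is not hard to check." This requires carefully using the hyperelliptic symmetry to pair up cylinders and saddle connections, and using that the quotient $X/\iota$ is a sphere with the six zero-to-zero saddle connections projecting to a trivalent-type graph, to rule out the spurious combinatorial possibilities. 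I would organize that verification by the cyclic order of cylinders around the (unique) zero-to-zero saddle connection(s) permitted by Lemma~\ref{cor:topology}(3)--(4), which cuts the number of cases down to a manageable, explicitly checkable list.
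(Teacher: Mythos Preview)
Your proposal contains a genuine error that breaks the argument. You assert that ``a fixed simple cylinder is impossible since its core carries two Weierstrass points but a simple cylinder cannot carry Weierstrass data consistently with the central symmetry flipping its single top and single bottom saddle connection.'' This is false: a simple cylinder fixed by the hyperelliptic involution is perfectly consistent---the involution swaps its two boundary saddle connections and fixes two points on the core curve. Indeed, configurations~\ref{cap:conf:list}f and~\ref{cap:conf:list}g, which you yourself list as the outcomes, contain fixed simple cylinders. Once this assertion fails, your dichotomy ``simple cylinders are exactly the exchanged pair or there are none'' collapses, and with it the structure of the rest of the argument.

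There are two further problems. First, you write that ``the case of zero simple cylinders is excluded exactly as in the first Claim by the $10$-fixed-points contradiction.'' That earlier argument ruled out the case of \emph{four fixed cylinders}, not zero simple cylinders; it has nothing to do with simplicity. Second, you say ``the case of more than two simple cylinders overshoots the saddle-connection budget,'' which is backwards: simple cylinders use \emph{fewer} boundary saddle connections, not more, so three or four simple cylinders comfortably fit inside the budget of $12$.

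The paper's proof is much shorter and uses your saddle-connection count in the right direction. If no cylinder is simple, then each of the two \emph{fixed} cylinders, being invariant under the central symmetry, has the same number of saddle connections on top and bottom, hence at least $2+2=4$; each of the two exchanged cylinders has at least $3$; this gives $\sum(t_k+b_k)\geq 8+6=14>12$, a contradiction. If all four are simple, the surface is a torus. (The case of three simple cylinders is not addressed explicitly in the paper's proof either; it can be excluded because the exchanged pair is either both simple or both non-simple, and if both exchanged cylinders are simple together with one fixed simple cylinder, one checks directly that no gluing in $\LLL$ results.) The key idea you were missing is that the hyperelliptic symmetry forces fixed non-simple cylinders to have at least four boundary saddle connections, which is what makes the count work.
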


\begin{proof}[Proof of the Claim]
Assume not. If there are four simple cylinders then obviously
$(X,\omega)$ is a torus, which is impossible. \\
If there is no simple cylinder then the number of saddle connections
is at least $8$ for the two fixed cylinders and $6$ for the other two.
Finally the number of saddle connections is at least $8+6=14>12$, which
is a also a contradiction.
\end{proof}

Hence there is either $1$ (necessarily fixed) simple cylinder, or $2$ (fixed
or exchanged) simple cylinders. These three possibilities lead to
three configurations. It is not hard to show that the three possible
cases are respectively given by
Figure~\ref{cap:conf:list}g-\ref{cap:conf:list}f-\ref{cap:conf:list}e.
The last two configurations will be respectively called $2T 2C_\fix$
and $2T_\fix 2C$.

\subsection{Proof of Theorem~\ref{cylclassH} in case of three cylinders}

Let $(X,\omega)$ be a translation surface and let us assume that the
vertical direction is completely periodic and decomposes the surface
in three metric cylinders. Clearly the number of fixed
cylinders (with respect to the hyperelliptic involution) is $1$ or $3$.

\begin{Claim}
Let us assume that only one cylinder is fixed by the involution. Then 
the number of simple cylinders is $0$ or $1$.
\end{Claim}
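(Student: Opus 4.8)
The plan is to count saddle connections and Weierstrass points, exactly as in the four-cylinder case. Suppose the vertical direction decomposes $(X,\omega)\in\LLL$ into three metric cylinders with precisely one of them fixed by the hyperelliptic involution $\tau$. By Lemma~\ref{cor:topology}, there are exactly $6$ saddle connections joining a zero to another zero, hence $12$ identifications of horizontal sides in total, and at most one saddle connection joining the two zeroes (item~\ref{cor:topology:item2}). Since only one cylinder is fixed, the two non-fixed cylinders are exchanged by $\tau$, so they are isometric; the fixed cylinder carries exactly two Weierstrass points on its core curve, and the remaining four (non-singular) Weierstrass points must lie at midpoints of saddle connections joining a zero to itself. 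That already forces at least four of the six inter-zero saddle connections (counting the $12$ sides) to be arranged so that $\tau$ fixes their midpoints.

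First I would bound the number of simple cylinders from above. A simple cylinder has only one saddle connection on each of its two boundary components, so it contributes $2$ to the count of boundary sides; a non-simple cylinder contributes at least $3$ to each boundary, hence at least $4$ in our normalization but in fact at least $3+1=4$ sides (with multiplicity). More usefully: the total number of horizontal sides, counted with the identification, is $12$. If there were two simple cylinders among the three, I would check case by case whether they can be the two exchanged ones or one fixed and one exchanged, and in each situation show the Weierstrass-point bookkeeping fails — a simple cylinder cannot carry a Weierstrass point on its boundary, and the fixed simple cylinder would have to account for its two Weierstrass points on its core, leaving only two non-simple cylinders to host the four midpoint Weierstrass points among too few self-saddle-connections. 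This is the same style of counting argument as in the previous two claims, so it should go through after enumerating the few gluing patterns.

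Concretely I would argue: each of the three cylinders has a top and a bottom boundary, so six boundary circles, and the $6$ inter-zero saddle connections are distributed among them (each appearing on two boundary circles, possibly the same one). If two cylinders are simple, at least $4$ of the $6$ boundary circles are single saddle connections, leaving the remaining structure too rigid. One then checks that the only surviving patterns with exactly one fixed cylinder and at most one saddle connection between the two zeroes are the two configurations in Figure~\ref{cap:conf:list}a and Figure~\ref{cap:conf:list}b, which indeed have respectively one and zero simple cylinders; in particular the number of simple cylinders is $0$ or $1$, as claimed.

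The main obstacle I anticipate is not any single estimate but the finite-but-fiddly enumeration: one must be careful that ``simple cylinder'' is used consistently (each boundary a single saddle connection, which may join a zero to itself or the two distinct zeroes), and that the interplay between the $\tau$-action on cylinders, the location of the six Weierstrass points, and the at-most-one zero-to-zero saddle connection is handled without omitting a gluing. Organizing the argument as ``(i) the two exchanged cylinders are isometric and $\tau$-swapped, (ii) the fixed cylinder's two Weierstrass points are on its core, (iii) the remaining four Weierstrass points are midpoints of self-saddle-connections, (iv) hence the number of self-saddle-connections among the $6$ is at least four, (v) a simple cylinder forces one of its boundaries to be a self-saddle-connection or a zero-to-zero connection, and counting shows at most one simple cylinder fits'' keeps the casework minimal.
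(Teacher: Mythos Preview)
Your approach is the same as the paper's---a Weierstrass-point count---but your execution is more tangled than it needs to be, and one step of your case analysis is off.

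The case ``one fixed and one exchanged cylinder simple'' that you propose to check separately does not occur: if one of the exchanged cylinders is simple, so is its image under $\tau$, hence both exchanged cylinders are simple; together with the fixed one that makes three simple cylinders, which is the torus case you already excluded. So with exactly two simple cylinders they are \emph{necessarily} the exchanged pair, and the fixed cylinder is the unique non-simple one. Your sentence about ``the fixed simple cylinder \ldots\ leaving only two non-simple cylinders'' is therefore describing a configuration that cannot arise, and the bookkeeping that follows it does not match the actual situation.

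Once you know the two simple cylinders are the exchanged pair, the paper's count is immediate: the $12$ boundary sides distribute as $2+2$ on the simple pair and $4+4$ on the fixed cylinder. No Weierstrass point sits on the boundary of a simple cylinder, so the eight Weierstrass points must be the two zeroes, the two points on the core of the fixed cylinder, and midpoints of $\tau$-invariant saddle connections on the fixed cylinder's boundary. Since $\tau$ exchanges the top and bottom of the fixed cylinder, at most two of those four top saddle connections can be $\tau$-invariant, giving at most $2+2+2=6<8$ Weierstrass points---contradiction. Your organization (i)--(v) can be replaced by this single paragraph.
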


\begin{proof}[Proof of the Claim]
Suppose not. Then the configuration for the vertical direction must
contain two or three simple cylinders. In the last case $X$ is a
torus, which is impossible.

If the configuration contains only two simple cylinders, they are
necessarily exchanged. Thus each boundary of the fixed cylinder must
possess exactly four saddle connections. Note that a Weierstrass
point can not be located on the boundary of a simple cylinder. Let us
count the number of Weierstrass points:
\begin{enumerate}
\item the two zeroes.
\item two Weierstrass points on the core curve of the fixed cylinder.
\item at most two Weierstrass points on the middle of the saddle
  connections of the fixed cylinder.
\end{enumerate}

Thus there are at most $6<8$ Weierstrass points which is a contradiction.
\end{proof}

Hence for such a configuration, there is no simple cylinder or one
fixed simple cylinder. It is then not difficult to check that theses
two cases lead respectively to the configurations presented in
Figure~\ref{cap:conf:list}a and in Figure~\ref{cap:conf:list}b. 

\begin{Claim}
Let us assume that the three cylinders are fixed by the involution. 
Then the number of simple cylinders is $0$ or $2$.
\end{Claim}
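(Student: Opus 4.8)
The strategy mirrors the two preceding claims: a Weierstrass-point count combined with a saddle-connection count, using Lemma~\ref{cor:topology}. Suppose the three cylinders $C_1, C_2, C_3$ are all fixed by the hyperelliptic involution. First I would record that each fixed cylinder carries exactly two Weierstrass points on its core curve, by Lemma~\ref{cor:topology}(2); together with the two zeroes that accounts for $3\times 2+2 = 8$ Weierstrass points, which is already the full count. Consequently \emph{no} Weierstrass point can lie on the middle of a saddle connection, so by Lemma~\ref{cor:topology}(2) again there is no saddle connection joining a zero to itself, and all six saddle connections (Lemma~\ref{cor:topology}(1)) join the two distinct zeroes.

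\textbf{Counting simple cylinders.} Now suppose for contradiction that the number of simple cylinders is $1$ or $3$. If all three are simple, then the surface is glued from three cylinders each of whose two boundary components is a single saddle connection; this uses at most $3$ distinct saddle connections, not $6$, and moreover such a surface is easily seen to have too small genus — contradiction. So assume exactly one cylinder, say $C_1$, is simple and $C_2, C_3$ are not. Then $C_1$ contributes (at most) $2$ saddle connections to its boundary, while each of $C_2$ and $C_3$, having a non-simple boundary, contributes at least $3$ on one side, hence at least $4$ saddle connections each when both boundary circles are counted — here I would argue via the Euler-characteristic/combinatorics bookkeeping already used in the four-cylinder case that a non-simple fixed cylinder in this locus borders at least four saddle-connection segments. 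Summing the contributions around all cylinder boundaries and dividing by $2$ (each saddle connection appears on two cylinder sides) gives a lower bound on the number of distinct saddle connections strictly exceeding $6$, contradicting Lemma~\ref{cor:topology}(1). Therefore the number of simple cylinders is $0$ or $2$.

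\textbf{Main obstacle.} The delicate point is the lower bound ``a non-simple fixed cylinder borders at least four saddle connections,'' together with making the summation-over-boundaries count honest: one must be careful that a single saddle connection can appear twice on the boundary of the \emph{same} cylinder, and that the hyperelliptic symmetry forces the two boundary circles of a fixed cylinder to carry the same number of saddle-connection segments. I would handle this exactly as in the four-cylinder argument above, by combining the constraint $\sum_i(\#\text{segments on }\partial C_i) = 2\cdot 6 = 12$ with the structural restrictions coming from the fixed points of the involution, and checking the few residual combinatorial possibilities by hand — noting in passing that the two surviving cases ($0$ or $2$ simple cylinders) are realized precisely by the configurations in Figure~\ref{cap:conf:list}c and Figure~\ref{cap:conf:list}d, which will be identified explicitly after the claim is established.
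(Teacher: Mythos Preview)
Your overall strategy (Weierstrass-point bookkeeping plus a boundary-segment count) is the right flavour, and it matches the spirit of the paper's argument, which simply says that with one simple fixed cylinder ``a straightforward calculation'' shows no configuration exists in $\LLL$. However, the specific contradiction you claim does not go through.

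First, the deduction in your opening paragraph is too strong. From the fact that all eight Weierstrass points are already accounted for (two zeroes plus two on each of the three core curves), Lemma~\ref{cor:topology}(2) only tells you that no saddle connection \emph{fixed} by the involution joins a zero to itself. It does not exclude a pair of self-saddle-connections exchanged by the involution, so you cannot conclude that all six saddle connections join the two distinct zeroes. This is not fatal to the plan, but it is an incorrect step as written.

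The real gap is in the counting. With exactly one simple fixed cylinder $C_1$ and two non-simple fixed cylinders $C_2,C_3$, the minimum number of boundary segments is $2+4+4=10$, and dividing by $2$ gives $5$, which does \emph{not} exceed $6$. In fact the constraint $\sum_i (\#\text{segments on }\partial C_i)=12$ forces the two non-simple cylinders to have $2$ and $3$ saddle connections on each boundary circle respectively, so there is a perfectly consistent segment count. The contradiction you announce (``a lower bound strictly exceeding $6$'') is simply false, and your parallel with the four-cylinder case breaks down because there the minimum really was $14>12$. What remains is exactly the ``few residual combinatorial possibilities'' you allude to in your obstacle paragraph: one must actually enumerate the gluings of a $(1,1)$-cylinder, a $(2,2)$-cylinder and a $(3,3)$-cylinder compatible with the hyperelliptic involution and check that none yields a surface in $\LLL$. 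That case-check \emph{is} the proof; the counting alone does not do it.
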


\begin{proof}[Proof of the Claim]
Assume not. Then the configuration contains one fixed simple
cylinder. By a straightforward calculation, one proves that no
possible configuration can occur in the hyperelliptic locus $\LLL$.
\end{proof}

Thus in that case, the configuration possesses either three (non
simple) fixed cylinders or two simples and one other (all fixed)
cylinders. It is then easy to see that these two cases lead
respectively to configurations presented in
Figure~\ref{cap:conf:list}c and in
Figure~\ref{cap:conf:list}d. \medskip

The proof of the theorem in case of two
cylinders is left  to the reader as an exercise.

\section{Complete periodicity}
\label{sec:completely:periodic}

We recall the following result by H. Masur (see \cite{Ma,MT}):

\begin{Theorem} [Masur]
Let $(X,\omega)$ be a translation surface.  There is a dense set of periodic directions on $(X, \omega)$.
\end{Theorem}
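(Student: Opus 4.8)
The plan is to show that every open arc of $\SU^1$ contains a periodic direction, i.e.\ a direction containing a cylinder. If $(X,\omega)$ is a flat torus every rational direction is periodic and there is nothing to prove, so assume $(X,\omega)$ has a singularity. I would first record the elementary fact that directions of saddle connections are dense: fixing a direction $\theta$, rotate it to the vertical and apply $g_t=\mathrm{diag}(e^{t},e^{-t})$, which contracts the vertical; by the standard systolic estimate every area-one surface carries a saddle connection of length at most a constant $C$ depending only on the stratum, so $g_t(X,\omega)$ has a saddle connection $\gamma_t$ with $g_t$-length $\le C$. Pulling back, $\gamma_t$ is a saddle connection on $(X,\omega)$ whose vertical component is bounded below by $\mathrm{sys}(X,\omega)/2$ and whose horizontal component is at most $Ce^{-t}$, so the angle between the direction of $\gamma_t$ on $(X,\omega)$ and $\theta$ tends to $0$ as $t\to\infty$; since $\theta$ was arbitrary, saddle-connection directions are dense.

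The main step is to upgrade a short saddle connection to a cylinder in a nearby direction. Given an arc $(\theta_0-\delta,\theta_0+\delta)$, choose by the previous paragraph a saddle connection $\gamma$ whose direction lies in this arc, and rotate so that $\gamma$ is vertical. Under $g_t=\mathrm{diag}(e^{t},e^{-t})$ the saddle connection $\gamma$ has length $e^{-t}|\gamma|\to0$, so the forward Teichm\"uller ray $t\mapsto g_t(X,\omega)$ eventually leaves every compact subset of the ambient stratum. Here I would invoke Masur's key lemma (\cite{Ma}): if the forward Teichm\"uller ray through a surface leaves every compact set, then the surface admits cylinders in directions tending to the contracted direction. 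Applied to our situation this produces cylinders on $(X,\omega)$ in directions converging to that of $\gamma$; since the direction of $\gamma$ lies in the \emph{open} arc, all of these directions eventually lie in $(\theta_0-\delta,\theta_0+\delta)$, which therefore contains a periodic direction. As the arc was arbitrary, periodic directions are dense.

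The part I expect to be the real obstacle is exactly Masur's lemma used in the second paragraph, i.e.\ turning the qualitative statement ``shorter and shorter saddle connections appear on $g_t(X,\omega)$'' into ``a genuine cylinder appears, in a controlled direction''. The difficulty is twofold: one must rule out the scenario in which the degenerating family only ever produces further short saddle connections rather than an embedded cylinder — handled by a collar/compactness argument for flat metrics together with the finiteness of the area — and one must keep the direction of the resulting cylinder close to that of the saddle connection that forced the divergence. Both points are carried out by Masur in \cite{Ma}; see also the exposition in \cite{MT}. Everything else in the argument is elementary.
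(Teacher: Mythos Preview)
The paper does not prove this theorem: it is simply \emph{recalled} with a citation to \cite{Ma,MT} and then used as input for Theorem~\ref{periodic}. So there is no paper proof to compare your attempt against.

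Your sketch is a reasonable outline of how the result is actually obtained, but note that it is not really an independent proof: the entire content lies in what you call ``Masur's key lemma'', and that lemma \emph{is} the theorem --- it is precisely the main result of \cite{Ma}. Your first paragraph (density of saddle-connection directions) is correct up to a small imprecision: what is bounded below by the systole is the \emph{total length} of the pulled-back saddle connection on $(X,\omega)$, not its vertical component; combined with the bound $e^{-t}C$ on the horizontal component this still gives that the angle to the vertical tends to zero, so the conclusion stands. Your second paragraph correctly identifies where the real work is (producing a genuine cylinder, not just a short saddle connection, with controlled direction), but since you defer that work to \cite{Ma}, what you have written is best read as an explanation of the architecture of Masur's argument rather than a proof in its own right. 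For the purposes of this paper that is exactly what is needed: the theorem is quoted, not reproved.
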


In this section, we prove the following strengthenings for translation
surfaces in the locus $\LLL$:

\begin{Theorem} 
\label{periodic}
Let $(X,\omega)$ be a translation surface that belongs to the
hyperelliptic locus $\LLL$. There is a dense set of periodic
directions $\theta \in \SU^1$ such that, in the direction $\theta$,
$\omega$ has a metric cylinder containing a Weierstrass point.
\end{Theorem}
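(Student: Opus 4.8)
The plan is to start from the dense set of periodic directions provided by Masur's theorem and then, for a given periodic direction $\theta$, perform a controlled modification to produce a nearby periodic direction whose cylinder decomposition contains a Weierstrass point on the core of a cylinder. Concretely, fix a periodic direction $\theta_0$ on $(X,\omega)\in\LLL$, so there is at least one cylinder $C$ in direction $\theta_0$. By the classification of completely periodic configurations in Theorem~\ref{cylclassH} (and the weaker fact recorded in Lemma~\ref{cor:topology} that a Weierstrass point lies either at the midpoint of a saddle connection joining a zero to itself or on the core of a cylinder), on each completely periodic surface in $\LLL$ there is \emph{some} cylinder whose core carries two Weierstrass points in almost every configuration; the issue is that an arbitrary periodic direction need not be completely periodic, and even a completely periodic one might a priori realize one of the few configurations with no such cylinder. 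So the first step is to argue that after a small perturbation of the direction inside the (open, dense) set of periodic directions, we may assume $\theta$ is completely periodic: this follows from the standard fact that the set of completely periodic directions is dense (indeed, one can use that any cylinder in direction $\theta_0$ can be ``completed'' by iterating Masur/Veech-type arguments, or simply that directions of eigendirections of affine pseudo-Anosov/parabolic maps constructed from any cylinder are dense).

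The heart of the argument is then a normal-form/surgery step. Given a completely periodic direction $\theta$, invoke Theorem~\ref{cylclassH}: the configuration is one of the finitely many pictures in Figure~\ref{cap:conf:list}. Inspecting that list, every configuration contains at least one cylinder whose core curve passes through a Weierstrass point (the fixed cylinders in configurations a--d, h--k literally have two Weierstrass points on their core; the configurations e, f, g with simple cylinders also contain non-simple fixed cylinders, and a simple cylinder is never fixed, so the remaining fixed cylinders carry the Weierstrass points there too). Hence in \emph{any} completely periodic direction the desired property already holds, and one does not even need a further perturbation — the density of completely periodic directions plus the classification gives the theorem directly. I would therefore structure the proof as: (1) recall/establish that completely periodic directions are dense in $\SU^1$ for surfaces in $\LLL$; (2) quote Theorem~\ref{cylclassH} and check, configuration by configuration from Figure~\ref{cap:conf:list}, that each admits a cylinder with a Weierstrass point on its core; (3) conclude.

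The main obstacle I anticipate is step (1): justifying that completely periodic directions — not merely periodic ones — are dense on every $(X,\omega)\in\LLL$. Masur's theorem only gives a cylinder in a dense set of directions. To upgrade, I would argue as follows: given a cylinder $C$ in a periodic direction $\theta_0$, consider the complement $X\setminus C$, a (possibly disconnected) translation surface with boundary of smaller complexity; apply Masur's theorem there to find a direction arbitrarily close to $\theta_0$ that is periodic on the complement, so that the whole surface decomposes into cylinders — this is a routine induction on the number of cylinders, using that $\LLL$ has genus $3$ so the complexity is bounded (the number of cylinders is at most $4$ by the Euler characteristic count already used in Section~\ref{configs}). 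Alternatively, and perhaps more cleanly, I can use that the affine Dehn twist along $C$ (even if $\theta_0$ is not parabolic, a single cylinder always yields a parabolic-like shear preserving $C$) together with Masur's theorem on the complement produces the completion; one must be slightly careful that the perturbation stays inside $\LLL$, but $\LLL$ is closed and $\GL_2^+(\R)$-invariant and the construction is internal to the surface, so this causes no trouble. Once density of complete periodicity is in hand, step (2) is a finite, purely combinatorial verification from the figure, and the theorem follows.
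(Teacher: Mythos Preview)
Your proof has a fatal gap in step (1): the claim that completely periodic directions are dense on every $(X,\omega)\in\LLL$ is false. Proposition~\ref{noncpgeneric} in this very paper shows that almost every surface in any stratum admits \emph{no} completely periodic direction whatsoever. Your proposed ``upgrade'' argument does not salvage this: if you change the direction so that the complement $X\setminus C$ is periodic, there is no reason the original cylinder $C$ persists in the new direction, and conversely a direction periodic on $C$ imposes a rationality condition on the complement that generically fails. The induction you sketch would need a common direction that is simultaneously periodic on each piece, which is exactly the complete-periodicity condition you are trying to obtain; there is no mechanism producing it.

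Step (2) also breaks: configuration~(h) in Figure~\ref{cap:conf:list} consists of two cylinders \emph{exchanged} by the hyperelliptic involution, so neither core curve carries a Weierstrass point (by Lemma~\ref{cor:topology} the Weierstrass points sit on saddle-connection midpoints in that case). Hence even on a surface where completely periodic directions are dense, some of them may fail to have the desired cylinder.

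The paper's argument avoids complete periodicity entirely. Starting from a periodic direction (Masur), if no cylinder is fixed one has a pair of exchanged cylinders $C_1,C_2$; the complement $X\setminus(C_1\cup C_2)$ is analyzed case by case and the relevant pieces are slit tori fixed by the involution. The key geometric step (Lemma~\ref{slit-torus} and its Claim) is that on a slit torus whose slit is not in a lattice direction, one can always find a cylinder avoiding the slit and fixed by the elliptic involution, in a direction whose angle with the slit is controlled by $1/L^2$ where $L$ is the slit length. A quantitative version of Masur's theorem (long saddle connections in the chosen periodic direction) then makes this angular deviation as small as desired, giving density.
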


\begin{Theorem} 
\label{completely-periodic}
Let $(X,\omega)$ be a translation surface in the hyperelliptic locus
$\LLL$. Let us assume that the Teichm\"uller disc of  $(X,\omega)$ is
stabilized by a pseudo-Anosov diffeomorphism and that there exists a completely periodic 
direction. Then every periodic
direction that contains a cylinder invariant by the hyperelliptic
involution is completely periodic. Moreover the set of completely
periodic directions is dense in $\SU^1$.
\end{Theorem}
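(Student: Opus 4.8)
The plan is to combine the configuration list of Section~\ref{configs} with the vanishing of the SAF-invariant coming from the hypotheses. First I would recall, following Calta--Smillie \cite{CaSm}, that a pseudo-Anosov-stabilized surface in $\LLL$ which has one completely periodic direction has SAF-invariant equal to zero \emph{in every direction of the trace (= holonomy) field}: the hyperbolic element of $\SL(X,\omega)$ together with the completely periodic direction forces the $J$-invariant to lie in a one-dimensional subspace over the trace field, and $J_{xx}$ then vanishes on the whole orbit. By the last remark of Section~\ref{background}, any interval exchange defined on $2$ or $3$ intervals with zero SAF-invariant is periodic; the whole game is therefore to cut the surface cleverly so that the first-return maps one meets have at most three intervals.

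The key step is the reduction move sketched in the Reader's guide. Let $\theta$ be a periodic direction containing a cylinder $C$ fixed by the hyperelliptic involution; such directions are dense by Theorem~\ref{periodic}. Remove (the interior of) $C$ from $X$. Since $C$ carries two of the six Weierstrass points and is glued to the rest of the surface along saddle connections in the direction $\theta$, the complement $Y$ is a genus-two translation surface with boundary (two boundary components, each a union of $\theta$-saddle connections), still carrying the induced involution and the SAF-vanishing property in the $\theta$-direction. I would then take a transverse interval $I$ to the $\theta$-flow on $Y$ and consider the first-return interval exchange $f$; because $Y$ has genus two, small complexity, and because the only obstruction to complete periodicity of $\theta$ on $Y$ is a minimal or uniquely-ergodic-with-saddle-connections component, I would argue using the configuration list of Theorem~\ref{cylclassH} that any \emph{partial} cylinder decomposition of $Y$ in the direction $\theta$ leaves a complementary piece on which the return map is an interval exchange on at most three intervals. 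Here the genus-two and boundary constraints are what keep the interval count down: an aperiodic component of a genus-two surface with this combinatorial type cannot require more than three intervals. Applying $SAF(f)=0$ to that piece, it is periodic, hence the whole of $Y$, and therefore $X$, is completely periodic in direction $\theta$.

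Concretely I would organize it as: (i) enumerate, from the list in Section~\ref{configs}, the possible shapes of a maximal $\theta$-cylinder decomposition of $Y$ together with the remaining minimal component(s); (ii) in each case identify the induced interval exchange on the minimal part and check it has $\le 3$ intervals; (iii) invoke zero SAF to conclude periodicity; (iv) glue the cylinder $C$ back in to get complete periodicity of $\theta$ on $X$, landing in one of the fixed-cylinder configurations of Figure~\ref{cap:conf:list}. Once we know that \emph{every} periodic direction containing an involution-invariant cylinder is completely periodic, density of such directions (Theorem~\ref{periodic}) immediately gives density of completely periodic directions in $\SU^1$, which is the ``moreover'' clause and hence also Theorem~\ref{complete-periodicity}.

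The main obstacle, I expect, is step (ii): showing that the aperiodic part of $Y$ really is governed by an interval exchange on at most three intervals in \emph{all} configurations. This requires a careful bookkeeping of saddle connections — using Lemma~\ref{cor:topology} (six zero-to-zero saddle connections, Weierstrass points only at saddle-connection midpoints or on cylinder cores) to bound how many saddle connections the minimal component can be bounded by, and hence how many intervals the return map has. A naive minimal component could a priori carry too many intervals; the content is that the hyperelliptic symmetry plus the removal of a fixed cylinder collapses the possibilities, exactly as the three- and four-cylinder analyses in Section~\ref{configs} did. If some configuration stubbornly produced a four-interval return map, one would need the sharper fact that a genus-two IET of that specific combinatorial type with zero SAF is still periodic, or else rule out that configuration by a Weierstrass-point count as in the Claims above.
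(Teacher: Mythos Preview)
Your outline follows the paper's strategy closely: invoke Calta--Smillie to get SAF${}=0$ in every holonomy-field direction, remove a hyperelliptic-invariant cylinder $C$, reduce to a genus-two object, and use the fact that a $\le 3$-interval IET with vanishing SAF is periodic. The density clause via Theorem~\ref{periodic} is also exactly right.

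The one genuine gap is your step~(ii), and you have correctly identified it as the obstacle. The paper resolves it not by enumerating minimal components on the slit surface $Y$, but by a clean compactification trick: since $C$ is fixed by the hyperelliptic involution, the boundary saddle connections of $Y$ come in pairs exchanged by the involution, and gluing each pair produces a \emph{closed} translation surface $\hat{Y}$ of genus two. One then reads off from the configuration list that $\hat{Y}$ lies in $\HHH(2)$, in $\HHH(1,1)$, or is already a union of cylinders. In $\HHH(2)$ the first-return IET has four intervals; SAF${}=0$ forces a saddle connection, dropping to three intervals and hence periodicity. In $\HHH(1,1)$ the IET has five intervals, but the gluing construction automatically supplies two non-homologous vertical saddle connections (coming from the former boundary), dropping again to three. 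This is the content of Lemmas~\ref{H(2)} and~\ref{H(1,1)}.

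So you do not need to analyze ``the aperiodic part of $Y$'' directly or worry about a stubborn four-interval case: pass to the closed surface $\hat{Y}$ and the interval count is forced by the stratum. Once you add this gluing step, your proof becomes the paper's proof.
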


\begin{Remark}
Theorem~\ref{completely-periodic} immediately
implies Theorem~\ref{complete-periodicity}.

 The statement of Theorem \ref{completely-periodic}
 is
a weak form of the completely periodicity studied by Calta in genus 2
(see~\cite{C}). \\
Nevertheless, it is not true that the hypothesis of the theorem
implies that the surface is completely periodic in the sense of Calta.
A direction with a splitting of the surface  into  two cylinders and
two non periodic tori was exhibited on the Arnoux-Yoccoz example see
\cite{HLM}. Similar constructions can be done for surfaces
arising from Thurston's construction.
\end{Remark}
\par
The following obvious remark shows that the behaviour stated
in Theorem~\ref{completely-periodic} is indeed special.
\par
\begin{Proposition} \label{noncpgeneric}
Almost every surface in any stratum does not admit a completely
periodic direction.
\end{Proposition}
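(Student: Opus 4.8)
The plan is to prove Proposition~\ref{noncpgeneric} by a standard measure-theoretic argument: the set of translation surfaces admitting a completely periodic direction is a countable union of positive-codimension subvarieties, hence has zero measure with respect to the $\SL_2(\R)$-invariant (Masur--Veech) measure on any stratum. First I would fix a stratum $\HHH(\kappa)$ and recall that having a completely periodic direction $\theta$ means the surface decomposes into finitely many cylinders glued along saddle connections parallel to $\theta$; the combinatorial data of such a decomposition (number of cylinders, gluing pattern, which saddle connections bound which cylinders) ranges over a \emph{finite} set once the stratum is fixed, since the number of cylinders and of saddle connections is bounded by the genus and the number of zeros via Euler characteristic, exactly as invoked in Section~\ref{configs}.

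Next, for each fixed combinatorial type $\mathcal C$ I would describe the locus $Z_{\mathcal C}\subset\HHH(\kappa)$ of surfaces that admit a completely periodic decomposition of type $\mathcal C$. In the period-coordinate chart, a cylinder decomposition imposes that a collection of saddle connections be mutually parallel; each such parallelism condition says that two period vectors $v,w\in\C$ satisfy $\mathrm{Im}(v\bar w)=0$, which is one real-analytic equation. A completely periodic decomposition forces at least one genuinely nontrivial such relation among the periods (one cannot have \emph{all} saddle connections parallel unless the surface degenerates), so $Z_{\mathcal C}$ is contained in a finite union of real-codimension-$\geq 1$ real-analytic subsets of $\HHH(\kappa)$, and in particular has measure zero. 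Taking the union over the finitely many types $\mathcal C$ in a fixed stratum, and then the countable union over all strata, shows that the bad set is a null set; its complement is the announced full-measure set of surfaces with no completely periodic direction.

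The one point that needs slightly more care — and which I expect to be the main obstacle to a fully rigorous write-up — is verifying that $Z_{\mathcal C}$ really is \emph{proper} (codimension $\geq 1$), i.e.\ that for each admissible combinatorial type there genuinely exists a surface in the stratum with that cylinder decomposition but whose periods do \emph{not} all satisfy the extra collinearity relations forced by the type. Equivalently, one must rule out the degenerate possibility that the collinearity conditions defining $Z_{\mathcal C}$ are implied by the period relations already holding on all of $\HHH(\kappa)$ (there are none, since period coordinates are local coordinates, so this is automatic, but it should be stated). Since period coordinates give an honest chart and the defining equations $\mathrm{Im}(v_i\bar v_j)=0$ are non-constant there, each $Z_{\mathcal C}$ is a proper analytic subset and the argument closes. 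I would phrase the whole proof in two or three sentences in the paper, citing the boundedness of the number of cylinders (Euler characteristic) and the fact that period coordinates are local coordinates, without belaboring the routine measure-zero estimate.
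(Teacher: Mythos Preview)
Your proposal is correct and follows essentially the same route as the paper's three-sentence proof: complete periodicity forces a proportionality relation among relative periods, which cuts out a positive-codimension locus in period coordinates, and a countable union of such loci has measure zero. One small refinement: your finitely many combinatorial types $\mathcal C$ each give a $Z_{\mathcal C}$ that is itself only a \emph{countable} union of analytic subsets (one per choice of relative homology classes realizing the saddle connections), and this countability is exactly what the paper invokes when it speaks of ``countably many choices of such a basis.''
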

\par
\begin{proof}
Complete periodicity can be expressed via proportionality of
a non-empty set of relative periods. Since any stratum admits
a coordinate system given by integration of a basis of relative periods,
surfaces with a completely periodic direction are 
given by a linear subspace of positive codimension in this coordinate
system.
Since there are only countably many choices of such a basis,
the set in question is of Lebesque measure zero.  
\end{proof}
\subsection{Proof of Theorem \ref{periodic}}

\begin{Definition} 

A {\em slit} is a geodesic segment embedded on a
translation surface (in particular, it has no self-intersection and
its end points differ). A {\em slit torus} is a flat torus with a marked
point where a slit starting from the marked point is removed. 

\end{Definition}

The key ingredient in the proof is the following lemma.

\begin{Lemma} 
\label{slit-torus}
Let $(X,\omega)$ be a translation surface of area 1 in the
hyperelliptic locus $\LLL$. Let us assume that the vertical
direction is  periodic, and, let $L$ be the length of the shortest
vertical saddle connection. There is a direction $\theta$ with
$\vert \cos(\theta) \vert \leq 1/L^2$ such that, in the direction
$\theta$, the surface $(X,\omega)$ has a metric cylinder containing a Weierstrass
point.
\end{Lemma}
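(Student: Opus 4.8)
The plan is to use the complete periodicity of the vertical direction together with the classification of configurations (Theorem~\ref{cylclassH}) to produce, on $(X,\omega)$, a subsurface which is essentially a slit torus carrying a Weierstrass point on its slit, and then to apply the classical estimate that a slit torus of unit area always has a short cylinder making a controlled angle with a given direction. More precisely, the first step is to run through the list in Figure~\ref{cap:conf:list}. By Lemma~\ref{cor:topology}(2), every configuration (except possibly the degenerate one-cylinder case, which by the Remark is a torus cover and handled separately) contains either a cylinder fixed by the hyperelliptic involution (whose core curve carries two Weierstrass points) or a vertical saddle connection joining a zero to itself with a Weierstrass point at its midpoint. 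In either case one isolates, inside $(X,\omega)$, a flat torus-with-slit: take a cylinder $C$ of the vertical decomposition whose closure contains the relevant Weierstrass point, together with the adjacent identifications, so that after collapsing the rest of the surface one sees a slit torus $T$ of area $\le 1$ with a marked Weierstrass point lying on (the midpoint of) the slit, and with the slit vertical of length $\ge L$.

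The second step is the geometric heart of the argument: on a flat torus $T$ of area $\le 1$ with a vertical slit of length $\ell \ge L$, there is a direction $\theta$ with $|\cos\theta| \le 1/L^2$ in which $T$ (minus the slit) decomposes into a cylinder whose core curve passes through the midpoint of the slit. This follows from the lattice geometry of the torus: the slit determines a vector in the period lattice $\Lambda_T$ of covolume $\le 1$; since this vector has length $\ge L$ and lies in the vertical direction, Minkowski's theorem (or an explicit reduction of the basis) forces the existence of a second lattice vector $v$ with $|\langle v, \text{vertical}\rangle| \le 1/L$, hence making an angle $\theta$ with the vertical satisfying $|\cos\theta| = |\sin(\text{angle to horizontal})| \le |v_{\text{hor}}|/|v| \le 1/L^2$ after normalizing. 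The direction of $v$ is then completely periodic on $T$, giving a single cylinder, and because the slit is vertical while $v$ is transverse to it, the Weierstrass point at the midpoint of the slit lies in the interior of that cylinder — in fact on its core curve, since on a torus the hyperelliptic (here elliptic) involution fixes the core of every cylinder through a fixed point.

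The third step is to check that this cylinder on $T$ extends to an honest metric cylinder on $(X,\omega)$ in the direction $\theta$ containing the same Weierstrass point; this is where the configuration analysis is used a second time, to ensure that the gluings outside $T$ do not obstruct the extension of the cylinder, and it is here I expect the main difficulty. One must argue configuration by configuration that the direction $\theta$, chosen transverse to the vertical slit, either already yields on all of $(X,\omega)$ a cylinder through the Weierstrass point, or can be perturbed within the allowed cone $|\cos\theta|\le 1/L^2$ to do so — using that the closed-up geodesic through a Weierstrass point, being invariant under the hyperelliptic involution, cannot hit a zero and hence sweeps out a genuine cylinder. Finally, applying this construction to $(X,\omega)$ rotated so that any given periodic direction plays the role of the vertical — together with Masur's theorem giving a dense set of periodic directions, each with some shortest saddle connection of length $L$ bounded below along a subsequence — and letting $L \to \infty$ as the direction varies over a dense set, one concludes that the directions $\theta$ produced in Lemma~\ref{slit-torus} accumulate everywhere, which yields Theorem~\ref{periodic}.
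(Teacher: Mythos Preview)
Your proposal has the right overall shape --- isolate a slit torus, produce a short-direction cylinder on it, and recover a cylinder on $X$ --- but the construction of the slit torus (step~1) and the lifting step (step~3) contain a genuine gap that the paper's argument sidesteps entirely.

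The paper does not ``collapse'' anything. If some vertical cylinder is already fixed by the hyperelliptic involution, one takes $\theta$ vertical and is done. Otherwise there is a pair $C_1,C_2$ of vertical cylinders \emph{exchanged} by the involution, and the paper takes $T$ to be a component of the complement $X\setminus(C_1\cup C_2)$. The configuration list (extended from the completely periodic case to the merely periodic case by approximating with a nearby square-tiled surface) shows that each such component is either a cylinder or a slit torus, and is \emph{fixed} by the involution --- hence carries Weierstrass points in its interior. The key point is that $T$ is then an honest embedded flat subsurface of $X$ with vertical slit boundary, so a cylinder in $T$ whose leaves miss the slit is automatically a metric cylinder in $X$: there is no extension problem whatsoever. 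The slit-torus claim becomes purely local: writing $T=\C/\Lambda$ with $\Lambda=\langle(1,0),(x,y)\rangle$ and slit vertical, the explicit direction $(x,y)$ gives a cylinder disjoint from the slit and containing the Weierstrass points of $T$, and the bound $|\cos\theta|\le 1/L^2$ follows from ${\rm area}(T)\le 1$ and slit length $\ge L$.

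By contrast, your step~1 takes a vertical cylinder $C$ together with ``adjacent identifications'' and ``collapses the rest'', which does not produce a flat subsurface of $X$; your Weierstrass point then sits on the slit rather than in the interior of $T$, and step~3 must undo the collapsing, which you flag as the main difficulty but do not resolve. The suggested fix --- that a geodesic through a non-singular Weierstrass point is involution-invariant and therefore ``cannot hit a zero'' --- is false: the zeroes in $\LLL$ are themselves Weierstrass points, and nothing prevents such a geodesic from terminating at one. Two further points you would need: the lemma assumes only that the vertical direction is \emph{periodic}, not completely periodic, so Theorem~\ref{cylclassH} is not directly available and one must argue (as the paper does) via a deformation to a square-tiled surface; and the case where $X\setminus(C_1\cup C_2)$ is empty (configuration~(h)) requires a separate ad hoc construction of the closed geodesic through a Weierstrass point.
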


\begin{proof}
If a vertical cylinder is invariant by the hyperelliptic involution,
there is nothing to prove. Otherwise, the vertical direction
contains two cylinders $C_1$ and $C_2$ exchanged by the involution.
\par
Section~\ref{configs} describes the completely periodic vertical
directions. In fact, it also gives all the types of periodic
directions. By rescaling, we may suppose that the integral along
the core curve of $C_1$ (and hence also of $C_2$) is in $\Q\cdot i$,
while the width is in $\Q$. Hence there is a surface $(Y,\eta)$
arbitrarily close to $(X,\omega)$ with cylinders $C_i$ of the same
size as those on $X$ and with relative periods in $\Q[i]$. The
surface $(Y,\eta)$ is square-tiled by construction. Hence its
vertical direction is completely periodic. A path between 
$(X,\omega)$ and $(Y,\eta)$ fixing $C_1$ and $C_2$ gives a way
of describing $(X,\omega)$ as a deformation of a
completely periodic surface, which leaves the cylinders $C_1$
and $C_2$ untouched.

The periodic configurations with two cylinders exchanged by the hyperelliptic
involution are listed in Figure~\ref{cap:conf:list}.

Using this list, we remark that $X \setminus (C_1 \cup C_2)$ falls
into the following list:
\begin{enumerate}
\item a torus cut along a slit  (Figure \ref{cap:conf:list}a)
\item a periodic cylinder (Figure \ref{cap:conf:list}b)
\item a union of two tori $T_1$ and $T_2$ cut along vertical slits
(Figure \ref{cap:conf:list}e)
\item a union of two periodic cylinders (Figure \ref{cap:conf:list}f)
\item a slit torus and a periodic cylinder (Figure \ref{cap:conf:list}g)
\item empty (Figure \ref{cap:conf:list}h).
\end{enumerate}

All the connected components of $X \setminus (C_1 \cup C_2)$ are
invariant under the hyperelliptic involution. Thus the vertical
direction contains a periodic cylinder fixed by the involution when
$X \setminus (C_1 \cup C_2)$ is a cylinder (case~(2)) or a union of a
cylinder and a torus (case~(5)).

Consequently, we assume that $X \setminus (C_1 \cup C_2)$ is the union
of one or two slit tori fixed by the hyperelliptic involution. Let
$T$ be one of these tori. We call {\em the elliptic involution} 
of a torus $T$ embedded into a flat surface of higher genus $X$ an 
elliptic involution that fixes one of the singularities of $X$ that
lies on $T$.

\begin{Claim}
Suppose that $T$ is a slit torus and that the direction of the slit
does not contain a lattice vector of $\Lambda$ where $T \cong \C/\Lambda$.
Then $T$ contains a metric cylinder that does not intersect
the slit and that is invariant under the hyperelliptic involution.
\end{Claim}

\begin{proof}[Proof of the Claim]
Up to normalization, the lattice is
$$\Lambda = \left\langle 
\begin{pmatrix}
 1 \\
 0
 \end{pmatrix},
\begin{pmatrix}
 x \\
 y
 \end{pmatrix} \right\rangle$$
and slit is vertical. By our hypothesis, we may assume $x \in (0,1)$.
As shown in Figure~\ref{cap:slit-torus}, the direction $\begin{pmatrix}
 x \\
 y
 \end{pmatrix}$
 is periodic and the cylinder contains a Weierstrass point. 
Moreover, the periodic leaves of the cylinder do not intersect the slit.
\begin{figure}[htbp]
\label{fig:slit-torus}
\begin{center}
\psfrag{s}{\textrm{slit}}
\psfrag{xy}{$\left(\begin{smallmatrix}x\\y \end{smallmatrix}\right)$}
\psfrag{1}{$\left(\begin{smallmatrix}1\\0 \end{smallmatrix}\right)$}
\psfrag{0}{$\left(\begin{smallmatrix}0\\0 \end{smallmatrix}\right)$}
\includegraphics[width=2.5cm]{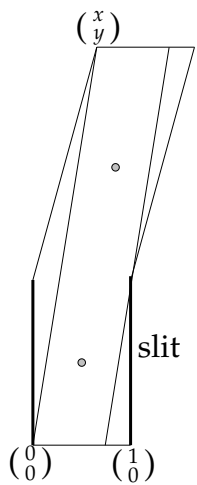}
\end{center}
\caption{
A cylinder fixed by the elliptic involution on a slit
torus}\label{cap:slit-torus}
\end{figure}
\end{proof}

The lengths of each slit is at least $L$ and the area of $T$ is less than 1.
Thus the distance between the left and right boundaries of $T$ is at
most $1/L$ and the length of the cylinder is at least $L$. This yields
$\vert \cos(\theta) \vert \leq 1/L^2$.

It remains to consider the case described in
Figure~\ref{cap:conf:list}h when $X \setminus (C_1 \cup C_2)$ is
empty. There is a segment starting from the Weierstrass point which is
in the middle of the saddle connection labelled by 3 to the middle of
the saddle connection labelled by 5. This segment and its image under
the hyperelliptic involution form a closed geodesic $\gamma$ in a
direction $\theta$. The periodic geodesic $\gamma$ is the core of a
cylinder (containing a Weierstrass point) and the angle $\theta$
satisfies $\vert \cos(\theta) \vert \leq 1/L^2$. This ends the proof of
Lemma~\ref{slit-torus}.
\end{proof}

\begin{proof}[Proof of Theorem~\ref{periodic}]
We assume that the area of $(X, \omega)$ is equal to one and we fix
$\varepsilon >0$ and $\phi \in \SU^1$. Applying Masur's theorem, there
exists a periodic direction $\psi$ with
$\vert \psi - \phi \vert < \varepsilon/2$.
On the other hand, for each $\delta$, there is only
finite number of
directions with a
saddle connection of length less than $\delta$. We may thus assume,
without loss of
generality, that the length of the shortest saddle connection in the
direction $\psi$ is at least 
$\frac{1}{\sqrt{\sin(\varepsilon/2)}}$. By
Lemma~\ref{slit-torus}, there is a periodic direction $\theta$ with a
cylinder containing a Weierstrass point and $\vert \sin(\theta - \psi)
\vert < \sin(\varepsilon/2)$ for $\varepsilon$ small enough. Thus $\vert \phi
- \theta \vert < \varepsilon$. Theorem~\ref{periodic} is proven.
\end{proof}

\subsection{Proof of Theorem \ref{completely-periodic}}

We first recall some facts concerning the $J$-invariant of Kenyon-Smillie~\cite{KS}
and the SAF-invariant.

\begin{Theorem}[Calta-Smillie \cite{CaSm} Theorem~1.5]
\label{theo:CaSm}
Assume that $(X,\omega)$ is stabilized by a pseudo-Anosov
diffeomorphism and that there exists a completely periodic direction. 
Then the SAF-invariant vanishes for all directions of the trace field.
\end{Theorem}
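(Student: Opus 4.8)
The plan is to use the pseudo-Anosov element to spread the vanishing of the SAF-invariant from one completely periodic direction to every direction of the trace field~$k$. First a soft observation: a completely periodic direction has vanishing SAF-invariant, because the first-return interval exchange in such a direction is periodic (a leaf through a point of a transversal goes once around its cylinder and returns to that point), so its SAF-invariant is zero. Now let $\phi\in\Aff^+(X,\omega)$ be pseudo-Anosov with derivative $M\in\SL(X,\omega)$, a hyperbolic matrix, and let $\theta_0$ be the completely periodic direction supplied by hypothesis. Since $\phi$ is affine, $\phi^n$ carries the cylinder decomposition in the direction $\theta_0$ to a cylinder decomposition in the direction $M^n\theta_0$; hence each $M^n\theta_0$ is completely periodic and $SAF_{M^n\theta_0}(X,\omega)=0$. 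As $\theta_0$ is not an eigendirection of $M$ (those carry the minimal stable/unstable foliations of $\phi$, which are not periodic), the directions $M^n\theta_0$, $n\in\Z$, are pairwise distinct and accumulate exactly at the two eigendirections of $M$; moreover they all lie in $k$, since saddle-connection directions belong to the trace field. So we have produced infinitely many, pairwise non-parallel, $k$-rational directions in which the SAF-invariant vanishes.

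The real work is to upgrade this to: $SAF_\theta(X,\omega)=0$ for \emph{every} $\theta$ in $k$. Normalise by a suitable element of $\GL^+_2(\R)$ --- this replaces $SAF$ in each direction by its image under a fixed $\Q$-linear automorphism of $\R\wedge_\Q\R$, hence does not affect vanishing, while preserving $k$-rational directions --- so that $\Lambda\subset k^2$. Writing the holonomy vectors of a polygonal decomposition as $v_i=(a_i,b_i)\in k^2$, one gets, for a direction of slope $t\in k$ and up to a nonzero factor in $k^\times$,
$$
SAF_t(X,\omega)=\sum_i(b_i-ta_i)\wedge_\Q(b_{i+1}-ta_{i+1})\in k\wedge_\Q k .
$$
Expanding in a fixed $\Q$-basis of $k$ exhibits $t\mapsto SAF_t(X,\omega)$ as a polynomial map of degree~$2$, with coefficients in the finite-dimensional $\Q$-vector space $k\wedge_\Q k$, in the Galois conjugates $(t,t^{\sigma_2},\dots,t^{\sigma_d})$ of~$t$. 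Since $M\in\SL_2(k)$ and its M\"obius action on slopes commutes with the Galois action, this polynomial vanishes along the orbit of $(t_0,\dots,t_0^{\sigma_d})$ under the commuting tuple $(M,M^{\sigma_2},\dots,M^{\sigma_d})$ acting coordinatewise on $(\P^1)^d$. The claim is that this orbit is Zariski-dense enough in $(\P^1)^d$ to force the polynomial to be identically zero: the first coordinate tends, as $n\to\pm\infty$, to the irrational fixed points of $M$, while the remaining coordinates equidistribute on circles because the Galois conjugates $M^{\sigma_j}$ ($j\ge 2$) are elliptic; a degree-$2$ polynomial over $\Q$ vanishing on such a set is zero. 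This yields $SAF_t(X,\omega)=0$ for all $t\in k$.

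The soft steps above are routine, and the whole weight of the argument is in the upgrade step, where I expect two points to be the main obstacle. One must (a) justify that $\theta\mapsto SAF_\theta(X,\omega)$ is genuinely polynomial of bounded degree in the Galois conjugates of the slope --- this needs care, because $\wedge_\Q$ does not commute with multiplication by irrational elements of $k$ --- and (b) establish that the Galois conjugates $M^{\sigma_j}$, $j\ge 2$, are elliptic, i.e.\ the ``parallelogram'' constraint satisfied by surfaces carrying a pseudo-Anosov; this is what makes the orbit of the commuting tuple fill up enough of $(\P^1)^d$, and full generality here may require working with a pseudo-Anosov whose trace generates~$k$ or replacing $\phi$ by a power. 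A variant of (b): exploit instead that $J(X,\omega)$ is fixed by $M\otimes M$; an eigenvalue analysis of $M\otimes M$ acting on $(\Lambda\otimes\Q)\wedge_\Q(\Lambda\otimes\Q)$ confines $J(X,\omega)$ to a $d$-dimensional $\Q$-subspace, on which finitely many of the projections $\pi_{M^n\theta_0}$ already span all the projections $\pi_\theta$, $\theta\in\P^1(k)$, reducing matters to a finite linear-algebra computation in $\R^2\wedge_\Q\R^2$ together with the same polynomial-interpolation input.
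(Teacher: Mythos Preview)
First, a framing remark: the paper does not itself prove this statement; it is quoted as Theorem~1.5 of Calta--Smillie \cite{CaSm} and used as a black box, so there is no in-paper proof to compare against.

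Your overall architecture is right --- completely periodic directions have vanishing SAF, the pseudo-Anosov $M$ transports this along its orbit, and one then needs algebraic input to pass from the orbit $\{M^n\theta_0\}$ to all of $\P^1(k)$ --- and the polynomiality claim (a) is essentially correct. The gap is in (b). Your Zariski-density argument hinges on the conjugates $M^{\sigma_j}$, $j\ge 2$, being elliptic, and this is not merely hard to check but \emph{false} in general; in fact it fails for the very surfaces $(X_n,\omega_n)$ of Section~\ref{sec:examples}. There the hyperbolic element $T_\alpha U_{1/2}$ has trace $(4+\alpha)/2$, and by the root estimates in Lemma~\ref{irreducibility} the Galois conjugates of $\alpha$ lie in $(0,1)$ and $(2,3)$, so all three Galois conjugates of the trace exceed $2$ and every $M^{\sigma_j}$ is hyperbolic. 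The diagonal orbit of $(M,M^{\sigma_2},\ldots,M^{\sigma_d})$ on $(\P^1)^d$ then accumulates only on the two points given by the tuples of attracting and repelling fixed points; a degree-two polynomial can certainly vanish on such an orbit without vanishing identically. Replacing $\phi$ by a power does not help, since Galois conjugates of powers are powers of Galois conjugates, hence still hyperbolic.

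Your variant does not close the gap either, at least not as you state it. The identity $\pi_{M\theta}=\pi_\theta\circ(M^{-1}\wedge M^{-1})$ shows that on the $(M\wedge M)$-fixed subspace all the functionals $\pi_{M^n\theta_0}$ restrict to the \emph{same} map as $\pi_{\theta_0}$; they span nothing beyond the single condition you started with. The Calta--Smillie argument does indeed pass through the $M$-invariance of $J$, but the substance lies elsewhere: one has to exploit the $\Q$-rational (indeed $k$-module) structure of the fixed space --- compare the decomposition $H^1(X,\R)=S\oplus S'\oplus S''$ via $\psi=\varphi^*+(\varphi^*)^{-1}$ used in Section~\ref{bouilla} --- together with the relations among $J_{xx}$, $J_{xy}$, $J_{yy}$ to show that a single $k$-rational SAF-vanishing propagates to all of them. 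Your sketch locates the right invariant but not the mechanism that makes one vanishing imply the rest.
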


The following facts concerning the SAF-invariant  are essentially
contained in Arnoux's thesis  \cite{Ar1} and  were used by Calta
\cite{C} and also by McMullen \cite{Mc2} (who works with the flux
instead of the SAF-invariant).

\begin{Lemma} \label{H(2)}
Let $(Y,\alpha)$ be a translation surface in the stratum $\HHH(2)$
such that the $SAF$-invariant vanishes in the vertical direction.
Then the vertical direction is completely periodic.
\end{Lemma}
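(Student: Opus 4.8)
The plan is to reduce the statement to a finite combinatorial check, exploiting the fact that $\HHH(2)$ is a very small stratum. First I would invoke Masur's theorem (quoted above) together with the general structure theory of flat surfaces: in any direction, $(Y,\alpha)$ decomposes into cylinders and minimal components, and when the vertical direction is \emph{not} completely periodic there is a nontrivial minimal component $Z \subset Y$. The first return map of the vertical flow to a suitable transverse interval is an interval exchange transformation $f$, and the hypothesis $SAF(f)=0$ must be combined with the fact that in $\HHH(2)$ the number of intervals involved is small — this is exactly the setting where the sentence in the Background section applies: \textit{if $f$ is defined over $2$ or $3$ intervals and $SAF(f)=0$ then $f$ is periodic}. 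So the core of the argument is to show that a vertical interval exchange on a surface in $\HHH(2)$ can always be taken to have at most $3$ intervals (or, if a minimal component is present, that the induced i.e.t.\ on that component has at most $3$ intervals).

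The key steps, in order, would be: (i) if the vertical direction already has a cylinder, remove it and induct on the complexity of $Y \setminus (\text{cylinder})$, noting that removing a cylinder in $\HHH(2)$ yields either a smaller translation surface (a torus, or a torus with a slit) or reduces to the $\HHH(0)$/torus case where $SAF=0$ forces rational slope and hence periodicity; (ii) if the vertical direction has no cylinder at all, then it is minimal on all of $Y$, and the Rauzy--Veech induction / Keane theory tells us the associated i.e.t.\ has at most $d = 2g + s - 1 = 4$ intervals in general, but the hyperelliptic/genus-two structure and the single zero of order $2$ cut this down; the decisive point is that a minimal i.e.t.\ coming from $\HHH(2)$ on a well-chosen interval has exactly $3$ (or fewer) intervals, so $SAF=0 \Rightarrow f$ periodic $\Rightarrow$ contradiction with minimality; (iii) conclude that the vertical direction cannot be minimal, then bootstrap: any non-periodic cylinder-containing direction would leave a minimal subsurface to which the same analysis applies, again giving a contradiction. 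Hence the direction is completely periodic.

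The main obstacle I expect is step (ii): controlling the number of intervals of the vertical i.e.t.\ and making sure it genuinely falls into the ``$2$ or $3$ intervals'' regime where the $SAF=0 \Rightarrow$ periodic implication is available. In full generality a minimal direction on a genus-two one-form can produce a $4$-interval i.e.t., so one must use the specific combinatorics of $\HHH(2)$ — e.g.\ that the Rauzy class of $\HHH(2)$ is the hyperelliptic one and that, after passing to a first-return map on a carefully chosen cross-section (for instance a separatrix segment adapted to the zero of order $2$), the relevant i.e.t.\ has at most three exchanged intervals — together with the additivity of the $SAF$-invariant under cutting out cylinders. Once the number of intervals is pinned down, the rest is the induction on cylinder removal, which is routine and mirrors the configuration analysis already carried out in Section \ref{configs}.
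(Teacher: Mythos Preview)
Your overall architecture is close to the paper's, but there is a genuine gap in step (ii), and it is precisely the obstacle you flagged yourself. A minimal vertical direction on a surface in $\HHH(2)$ really does give a $4$-interval i.e.t.\ (the hyperelliptic Rauzy class on four letters), and there is no ``well-chosen cross-section'' that brings this down to three intervals while keeping the same vertical flow. So the implication ``$SAF=0$ on $\leq 3$ intervals $\Rightarrow$ periodic'' from the Background section is simply not available in the minimal case as you have set it up, and your step (ii) does not close.

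What the paper does instead is insert one extra input that you are missing: a result from Arnoux's thesis that, in genus two, vanishing SAF-invariant forces the vertical flow to be \emph{non}-minimal. This immediately produces a vertical saddle connection, and that saddle connection is what drops the $4$-interval i.e.t.\ to a $3$-interval one; then the ``$\leq 3$ intervals and $SAF=0$ $\Rightarrow$ periodic'' fact finishes the proof in one line. In other words, your dichotomy (cylinder present vs.\ minimal) and the induction in step (i) are unnecessary detours: once you know non-minimality, you go straight from $4$ intervals to $3$ and are done. The missing idea is exactly this genus-two ``$SAF=0 \Rightarrow$ non-minimal'' step; without it, the $4$-interval barrier cannot be crossed by combinatorics of cross-sections alone.
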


\begin{Lemma} \label{H(1,1)}
Let $(Y,\alpha)$ be a translation surface in the stratum $\HHH(1,1)$
such that the $SAF$-invariant vanishes in the vertical direction. If
there are two non homologous vertical saddle connections, then the
vertical direction is completely periodic.
\end{Lemma}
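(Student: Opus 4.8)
The plan is to analyze the decomposition of the vertical foliation of $(Y,\alpha)$ into periodic and minimal pieces and to combine additivity of the $SAF$-invariant with the ``$2$ or $3$ intervals'' criterion recalled in Section~\ref{background}. I would argue by contradiction, assuming that the vertical direction is not completely periodic.

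First I would invoke the classical structure theory of measured foliations: cutting $Y$ along the union $\Gamma$ of all its vertical saddle connections, each complementary component is either a periodic cylinder or a \emph{minimal component}, a subsurface $Z$ of genus at least one on whose interior the vertical foliation is minimal (a genus-zero surface carries no minimal foliation). On a periodic cylinder the first return map of the vertical flow to a horizontal transversal is a periodic interval exchange, and on a minimal component $Z$ it is a minimal interval exchange $f_Z$. Taking a full transversal that is a disjoint union of one transversal in each piece, and using that periodic interval exchanges have vanishing $SAF$, additivity of the $SAF$-invariant yields
$$ 0 \;=\; SAF(Y,\alpha) \;=\; \sum_{Z} SAF(f_Z), $$
the sum running over the minimal components, which by assumption is nonempty.

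The heart of the matter is that the hypotheses force a very rigid picture. Since $(Y,\alpha)\in\HHH(1,1)$ has genus two with exactly two four-pronged vertical singularities, there are at most four vertical saddle connections, and at most three once a minimal component is present; by the non-homology hypothesis there are then exactly two or three, and each minimal component has genus one. Arguing case by case on the number of vertical saddle connections and on how the eight vertical separatrices are shared between $\Gamma$ and the minimal components — an Euler-characteristic and separatrix bookkeeping entirely parallel to the configuration analysis of Section~\ref{configs} — I would show that the only surviving possibility is a \emph{single} minimal component $Z$, a one-holed torus, so that $f_Z$ is an interval exchange on $2$ or $3$ intervals. The alternatives (two one-holed tori glued along a common boundary, or a single genus-two minimal component obtained by slitting $Y$) are excluded because in them every vertical saddle connection lies on one and the same separating curve, respectively reduces to a single saddle connection, so the saddle connections are pairwise homologous — this is precisely where the two non-homologous saddle connections enter. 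With $Z$ the unique minimal component we get $SAF(f_Z) = SAF(Y,\alpha) = 0$; since $f_Z$ has at most $3$ intervals, the criterion of Section~\ref{background} forces $f_Z$ to be periodic, contradicting the minimality of $Z$. Hence no minimal component exists and the vertical direction is completely periodic.

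I expect the case analysis of the third paragraph to be the only real obstacle: one has to enumerate how the two four-pronged singularities and their vertical separatrices can be organized once a minimal component is present, and to check in each case both that there is a single minimal component and that the resulting interval exchange has at most three intervals. This is finite and elementary, but somewhat tedious; I would organize it using the same pictures and notation as in Section~\ref{configs}.
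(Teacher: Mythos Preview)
Your strategy is valid and reaches the same endpoint as the paper---an interval exchange on at most three intervals to which the $SAF=0$ criterion of Section~\ref{background} applies---but the paper gets there in two lines without any decomposition or case analysis. Instead of cutting along all vertical saddle connections and arguing about the topology of the pieces, the paper simply takes a full transversal $\tau$ on all of $Y$: since $(Y,\alpha)\in\HHH(1,1)$, the first return map of the vertical flow is an interval exchange on $5$ intervals; the non-homology hypothesis is invoked only to say that $Y\setminus(\gamma_1\cup\gamma_2)$ is connected, and each of the two saddle connections then suppresses one discontinuity of this global exchange, leaving a $3$-interval exchange with $SAF=0$, hence periodic. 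Your decomposition into minimal and periodic pieces is the natural general-purpose tool, and the Euler-characteristic and separatrix bookkeeping you outline can certainly be carried out, but here the global interval count is already small enough that the extra structure theory is doing no work. (One small correction to your sketch: a genus-two minimal piece is not excluded by the non-homology hypothesis in the way you suggest---cutting a closed genus-two surface along a single arc between distinct points still leaves the genus at two---but rather by the $\chi$-drop that occurs only after the \emph{second}, non-homologous, cut.)
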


\begin{proof}[Proof of Lemmas~\ref{H(2)},~\ref{H(1,1)}]

For the sake of the completeness, we give a sketch of proof of Lemmas
\ref{H(2)}, \ref{H(1,1)}. We recall that, in genus 2, if the
SAF-invariant vanishes then the flow in the vertical direction is
not minimal [Ar]. Therefore, there is a vertical saddle connection.

If $(Y,\alpha)$ belongs to $\HHH(2)$,  the induced map of the vertical flow on a suitable
transversal is an interval exchange
transformation on 4 intervals. Since there is a saddle connection, it
reduces to an interval exchange transformation on 3 intervals.  An
interval exchange on 3 intervals with SAF-invariant equals to 0 is
periodic (see \cite{Ar1}). Thus, the vertical flow on $(Y,\alpha)$
is periodic.

If $(Y,\alpha)$ belongs to $\HHH(1,1)$,  the induced map of the vertical flow on a
suitable transversal $\tau$ is an interval exchange
transformation on 5 intervals. By hypothesis, there are two vertical
saddle connections  $\gamma_1$ and $\gamma_2$ such that $Y \setminus
(\gamma_1 \cup \gamma_2)$ is connected. Thus, the induced map on
$\tau$ reduces to an interval exchange transformation on 3 intervals with
SAF-invariant equals to 0. This interval exchange transformation is
periodic and thus vertical flow is periodic.

\end{proof}

\begin{proof}[Proof of Theorem~\ref{completely-periodic}]

Let us consider a cylinder $C$ fixed by the hyperelliptic involution
on $(X,\omega)$. Let $\theta$ be the direction of $C$. It is a
direction of the holonomy field (see background) thus, by
Calta-Smillie's result, the SAF-invariant vanishes in the direction
$\theta$. Without loss of 
generality we may assume that the direction $\theta$ is the vertical
direction. The complement of $C$ is a slit surface $Y$ with vertical
boundaries. We have to prove that the vertical flow is completely
periodic on $Y$. The SAF-invariant equals to 0 on $C$, thus it
vanishes on $Y$. The topological type of $Y$ can be deduced from the
description of  the completely periodic directions (see
Figure~\ref{cap:conf:list}) by the same argument as in Lemma~\ref{slit-torus}.

There is a canonical way to associate to $Y$ a compact surface $\hat
Y$ without boundary. $Y$ is obtained from $X$ by removing a cylinder $C$
invariant under the hyperelliptic involution. A slit on $Y$
corresponds to a saddle connection  on the boundary of $C$ and that is
not invariant under the hyperelliptic involution. Since $C$ is fixed by
the hyperelliptic involution, these slits come in pairs. $\hat{Y}$
is obtained from $Y$ by gluing the pairs of slits exchanged by the
hyperelliptic involution. The vertical flow on $Y$ is periodic if
and only if it is periodic on $\hat{Y}$.

Referring to the Figure~\ref{cap:conf:list}, we denote the cylinders by I, II,
III, etc from the left to the right.

\begin{itemize}
\item $\hat{Y}$ belongs to $\HHH(2)$, if II is removed in Figure~\ref{cap:conf:list}a.
\item $\hat{Y}$ belongs to $\HHH(2)$, if II is removed in Figure~\ref{cap:conf:list}b.
\item  $\hat{Y}$ belongs to $\HHH(1,1)$, if I is removed in Figure~\ref{cap:conf:list}c.
\item  $\hat{Y}$ belongs to $\HHH(1,1)$, if II is removed in Figure~\ref{cap:conf:list}c.
\item  $\hat{Y}$ belongs to $\HHH(1,1)$, if III is removed in Figure~\ref{cap:conf:list}c.
\item $\hat{Y}$ belongs to $\HHH(2)$, if I is removed in Figure~\ref{cap:conf:list}d.
\item $\hat{Y}$ belongs to $\HHH(2)$, if II is removed in Figure~\ref{cap:conf:list}d.
\item  $\hat{Y}$ is the union of two cylinders,  if III is removed in 
Figure~\ref{cap:conf:list}d.
\item  $\hat{Y}$ belongs to $\HHH(2)$, if II is removed in Figure~\ref{cap:conf:list}e.
\item $\hat{Y}$ belongs to $\HHH(2)$, if IV is removed in Figure~\ref{cap:conf:list}e.
\item $\hat{Y}$ belongs to $\HHH(2)$, if II is removed in Figure~\ref{cap:conf:list}f.
\item $\hat{Y}$ belongs to $\HHH(2)$, if IV is removed in Figure~\ref{cap:conf:list}f.
\item $\hat{Y}$ belongs to $\HHH(2)$, if IV is removed in Figure~\ref{cap:conf:list}g.
\item $\hat{Y}$ belongs to $\HHH(2)$, if II is removed in Figure~\ref{cap:conf:list}g.
\item $\hat{Y}$ belongs to $\HHH(2)$, if I is removed in Figure~\ref{cap:conf:list}i.
\item $\hat{Y}$ is a cylinder, if II is removed in Figure~\ref{cap:conf:list}i
\item $\hat{Y}$ belongs to $\HHH(1,1)$ if I is removed in Figure~\ref{cap:conf:list}j.
\item $\hat{Y}$ is a cylinder, if II is removed in Figure~\ref{cap:conf:list}j
\item $\hat{Y}$ belongs to $\HHH(2)$, if I is removed in Figure~\ref{cap:conf:list}k.
\item $\hat{Y}$ belongs to $\HHH(2)$, if II is removed in Figure~\ref{cap:conf:list}k.
\end{itemize}

By Lemma \ref{H(2)}, the vertical direction is completely periodic
when $\hat{Y} \in \HHH(2)$. We have to check the hypothesis of Lemma
\ref{H(1,1)} when $\hat{Y}$ belongs to $\HHH(1,1)$. We treat the case where
the cylinder I is removed in Figure~\ref{cap:conf:list}c. The analysis
in the other cases is more or less the same. The surface $Y$ contains
two vertical saddle connections isometric to the saddle connections
labelled by 1 and 2 in Figure~\ref{cap:conf:list}c. These connections don't disconnect
the surface thus, by Lemma~\ref{H(1,1)}, the flow in the vertical
direction is completely periodic on $\hat{Y}$. Therefore the vertical
direction is completely periodic on $(X,\omega)$.

Combining  Theorem \ref{periodic} and the previous argument, we
immediately deduce that there is a dense set of completely periodic
directions on $(X,\omega)$. The theorem is proven.
\end{proof}

\section{Detecting the closure: 
reduction steps} \label{reductionsteps}

In this section we show in several instances that a completely periodic direction 
on a surface $(X,\omega)$ plus some irrationality condition implies the
existence of a more useful completely periodic direction on a surface $(Y,\eta)$
in the $\GL^+_2(\R)$-orbit closure of $(X,\omega)$. Moreover,  $(Y,\eta)$
can be chosen such that the new completely periodic direction still 
satisfies some irrationality condition. The underlying strategy of all lemmas in this 
section is roughly the same. Finally, we arrive in a situation where we can show
that the $\GL^+_2(\R)$-orbit closure of $(Y,\eta)$ is all of $\LLL$. 

A bit more precisely, we start with a direction $p$ on $(X,\omega)$ for which there exists a 
splitting of the surface into some cylinders and/or tori. We apply
cyclic unipotent group $U$ (with eigenvector $p$) on
$(X,\omega)$. This action stabilizes globally the splitting and acts
on the product of tori and/or cylinders. We then apply Ratner's
theorem on orbit closures of the cyclic unipotent group $U$ inside
$U^k$ or $\Sl^k$ for $k=2,3$. The orbit closure is of the form $H\cdot
(X,\omega)$, where $H$ is a closed subgroup of $\Sl^k$ for $k=2,3$, in
the closure of the $\Sl$-orbit of $(X,\omega)$. Then with ``generic''
conditions, we get that $H$ is as big as possible use this to choose $(Y.\eta)$.

Two types of splitting directions will be particularly useful. The first one is a 
{\em $3C$-direction}; this is a completely periodic direction with
three cylinders and a configuration of saddle connections as the vertical direction
of Figure~\ref{cap:conf:list}d. Of course, this is not the only 
direction with three cylinders, but we stick to this short notation also used
in \cite{HLM}. The second one is defined as follows.

\begin{Definition}
A {\em $2T_\fix 2C$-direction} is a direction
with two simple cylinders exchanged by the hyperelliptic involution and
such that the complement decomposes into two tori. The vertical
direction in Figure~\ref{cap:conf:list}e is an example, which is in
fact completely periodic. A {\em $2T_\fix 2C$-direction} is not always
completely periodic. In fact if a {\em $2T_\fix 2C$-direction} is
not completely periodic, the flow, in that direction, contains one or 
two minimal component(s), corresponding to the two tori. We will say that a {\em
  $2T_\fix 2C$-direction} is  {\it irrational} if it is not completely
periodic.   
\end{Definition}

\begin{Remark}
We alert a reader who has also read \cite{HLM}, that there
such a direction was simply called $2T2C$. But we need to distinguish $2T_\fix 2C$-directions 
from {\em $2T2C_\fix$-directions}. A {\em completely periodic
$2T2C_\fix$-direction} (e.g. in Figure~\ref{cap:conf:list}f)
has a two simple cylinders {\it fixed} by the hyperelliptic involution and, 
cutting along all saddle connections in that direction, the complement 
consists also of two components, exchanged by the hyperelliptic involution.  
These components can be glued in a obvious way to two tori.
It will be convenient also to talk of {\em irrational 
$2T2C_\fix$-directions}, although this is some abuse of terminology.
Such a direction (cf Figure~\ref{cap:2T2Cfixuneq}a) also has two 
simple cylinders fixed by the 
hyperelliptic involution, but no saddle connections in the
complement. This complement is then connected and thus it is a
surface of genus two with two slits (rather than two tori).
\end{Remark}

We now give some useful definitions for the upcoming lemma.

\begin{Definition}
For any lattice $\Lambda$ we denote by $\Lambda'$ the normalized area
one lattice obtained by rescaling $\Lambda$ by a homothety.
We say that two lattices $\Lambda_1$ and $\Lambda_2$ are 
{\em strongly non-commensurable with respect to
a direction $p \in \R^2$}, if there is no unipotent element
$u$ with eigenvector $p$ such that $u(\Lambda'_1)$ and $\Lambda'_2$
(or $\Lambda'_1$ and $u(\Lambda'_2)$) are commensurable.
\end{Definition}

\begin{Definition}
We say that $3$ moduli $m_i$ of cylinders, in a completely periodic direction 
are {\em pairwise incommensurable}, if $m_i/m_j \neq \Q$ 
for $i \neq j$, i.~e.\ if they are $\Q$-linearly independent
or admit a $\Q$-linear relation $\sum_{i=1}^3 a_i m_i =0$ unique up to
scalars and with $a_i \neq 0$ for all $i$. 
     %
\end{Definition}

The first lemma consists of the key argument of~\cite{HLM}. We recall
the proof in details. From Lemma~\ref{2T2Cstronglyindep} to
Lemma~\ref{cpcase3fix3} we will relax the hypothesis on the translation
surface $(X,\omega)$.

\begin{Lemma} 
\label{3Cstronglyindep}
Let us assume that $(X,\omega) \in \LLL$ contains a
$3C$-direction. Then it contains also a transverse $2T_\fix
2C$-direction, denoted by $p$: it is given by the four homologous
saddle connection drawn in Figure~\ref{fig:surface-Ratner:3C}. Let us assume that the
direction $p$ has irrational and strongly non-commensurable tori. 
Then the $\GL^+_2(\R)$-orbit closure of $(X,\omega)$ is $\LLL$.
\end{Lemma}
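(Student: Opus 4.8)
The plan is to exploit the $2T_\fix 2C$-direction $p$: cutting $(X,\omega)$ along the four homologous saddle connections in Figure~\ref{fig:surface-Ratner:3C} produces two simple cylinders $C_1$, $C_2$ exchanged by the hyperelliptic involution, together with two tori $T_1$, $T_2$ (each cut along a slit). Normalize $p$ to be the vertical direction, and let $U = \{u_t\}$ be the unipotent subgroup of $\Sl$ fixing $p$. Since $U$ preserves the direction $p$, it preserves the whole decomposition: it acts on each torus $T_i$ (as a one-parameter unipotent subgroup of $\Sl$ acting on the $\Sl/\SL_2(\Z)$-point associated to the lattice $\Lambda_i$ with $T_i \cong \C/\Lambda_i$) and on the pair of cylinders $C_1, C_2$ (the two cylinders being swapped, so effectively one acts on a single cylinder modulus, i.e. on a point of $\R_{>0} \times (\R/\Z)$, the space parametrizing a cylinder of given area up to the twist). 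So the closure of the $U$-orbit of $(X,\omega)$ inside the closure of the $\Sl$-orbit is governed by the closure of a single $U$-orbit inside the homogeneous space $\bigl(\Sl/\SL_2(\Z)\bigr)^2$ times the cylinder factor.

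The key step is an application of Ratner's orbit closure theorem. First I would handle the two tori: because $p$ is irrational, neither $u_t(\Lambda_1')$ nor $\Lambda_1'$ lies in a closed $U$-orbit (no vertical closed geodesic), so the $U$-orbit of the point $(\Lambda_1', \Lambda_2')$ in $\bigl(\Sl/\SL_2(\Z)\bigr)^2$ is not compact; Ratner's theorem then says its closure is an orbit $H\cdot(\Lambda_1',\Lambda_2')$ of a closed connected subgroup $H \supseteq U$ with $H/(H\cap\Gamma)$ carrying an invariant probability measure. The possibilities for $H$ in $\Sl \times \Sl$ are: the diagonal (or a conjugate by an element commuting with $U$, i.e. graph of a unipotent conjugation), or all of $\Sl \times \Sl$. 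The strong non-commensurability hypothesis for the direction $p$ is precisely designed to rule out the graph case: if $H$ were such a graph, then $u(\Lambda_1')$ and $\Lambda_2'$ would be commensurable for some unipotent $u$ with eigenvector $p$, contradicting the hypothesis. Hence $H = \Sl \times \Sl$, so the closure of the $U$-orbit of $(\Lambda_1',\Lambda_2')$ is all of $\bigl(\Sl/\SL_2(\Z)\bigr)^2$. In particular we may move to a surface $(Y,\eta)$ in the $\Sl$-orbit closure of $(X,\omega)$ whose two tori are arbitrary (e.g. both square tori), while keeping the cylinders $C_1, C_2$ — and then, using a further unipotent or the full $\Sl$-action coming from $H = \Sl\times\Sl$ combined with the action on the cylinder factor, adjust the cylinder moduli and twists freely as well. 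This shows the $\Sl$-orbit closure of $(X,\omega)$ contains a full-dimensional family of $2T_\fix 2C$-surfaces.

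Finally I would upgrade ``full-dimensional family'' to ``all of $\LLL$''. The locus $\LLL$ is, near a $2T_\fix 2C$-surface, parametrized by: the two lattices (each $4$ real parameters, or $3$ after area normalization), the slit in each torus, the heights/twists of the two exchanged cylinders, plus the relative position data gluing everything; a dimension count (as in \cite{HLM}) shows that the data we have just shown to range over everything — the pair of normalized lattices in $\bigl(\Sl/\SL_2(\Z)\bigr)^2$, together with the cylinder modulus and twist — already fills out an open subset of $\LLL$, and since the $\Sl$-orbit closure is $\Sl$-invariant and closed, it must be a full-dimensional closed $\GL_2^+(\R)$-invariant subset; combined with connectedness of $\LLL$ and the fact that any such invariant subset containing an open set is everything (the $\GL_2^+(\R)$-action together with the established transitivity on the splitting parameters is enough to reach every point, cf. the argument in \cite{Mc3} and \cite{HLM}), we conclude $\ol{\GL_2^+(\R)\cdot(X,\omega)} = \LLL$. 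The main obstacle is the middle step: carefully justifying that $U$ acts on the splitting data exactly as a one-parameter unipotent on $\bigl(\Sl/\SL_2(\Z)\bigr)^2 \times (\text{cylinder space})$, that the irrationality hypothesis gives non-compactness of the orbit, and that the list of intermediate subgroups $H$ is exhausted by the diagonal-type and the full group — and then that strong non-commensurability kills the diagonal-type case. Everything after that is a dimension count and an invariance argument that closely parallels \cite{HLM}.
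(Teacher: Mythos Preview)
Your application of Ratner's theorem is correct and matches the paper: the hypotheses ``irrational'' and ``strongly non-commensurable'' force $H = \Sl^2 \times U$ acting on the pieces of the $2T_\fix 2C$-splitting. The gap is in what comes next. The action of $\Sl^2 \times U$ on the pieces preserves the area of each torus (the $\Sl$-factor acts on the \emph{normalized} lattice $\Lambda_i'$) and preserves the height of the cylinder (the $U$-factor only changes the twist). Together with the connection vector this yields a subset of $Z$ of real dimension $(3+3+1)+2 = 9$, while $\LLL_1$ has dimension $11$. Your claim that one can ``adjust the cylinder moduli freely'' is exactly where the argument breaks: neither factor of $H$ touches the two area ratios ${\rm area}(T_i)/{\rm area}(C_1)$, so a single Ratner application cannot produce an open set in $\LLL$.

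The paper recovers the two missing parameters by actually using the $3C$-hypothesis, which you never invoke. From the $3C$-direction one obtains a \emph{second} transverse $2T_\fix 2C$-direction $\tilde p$ (a Dehn twist of $p$ around the non-simple cylinder). This $\tilde p$-splitting persists on nearby surfaces; in particular on the surfaces $U_{u_1}T_1\,\#\,C_1\,\#\,U_{u_2}T_2\,\#\,C_2$ that you already know lie in $Z$. The point (Lemma~5.9 of \cite{HLM}) is that the map
\[
(u_1,u_2)\ \longmapsto\ \bigl({\rm area}(T_1')/{\rm area}(C_1'),\ {\rm area}(T_2')/{\rm area}(C_1')\bigr)
\]
for the pieces of the $\tilde p$-splitting is a local diffeomorphism near $(0,0)$. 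Hence by varying $(u_1,u_2)$ one varies the area ratios in the $\tilde p$-direction, and a second application of Ratner in that direction produces a subset of $Z$ of positive Lebesgue measure. The conclusion then follows from ergodicity of the Teichm\"uller flow on $\LLL_1$, not from a connectedness/transitivity argument as you suggest.
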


\begin{proof}
Let us denote by $Z$ the closure of $(X,\omega)$ under $\Sl$
inside $\LLL_1$, the real hypersurface of translation surfaces of
$\omega$-volume one. One has to show that $Z = \LLL_1$. Note that
$\LLL_1$ has real dimension $11$.
Let $U$ the unipotent subgroup of $\Sl$ generated by unipotent elements 
$u$ having $p$ has eigenvector. Then the action of $U$ on $(X,\omega)$
is very simple: it stabilizes globally the direction $p$ and it acts
on each component of the splitting. Thus $U$ acts on the space of
a pair of tori and a cylinder, which is isomorphic to $\left(\Sl/\textrm{SL}_2(\Z)\right)^2
\times U$. Thanks to Ratner's theorem, the closure of $U\cdot
(X,\omega)$ is algebraic, i.~e.\ $H\cdot (X,\omega)$ where $H$ is a
closed subgroup of $\Sl^2\times U$ containing $U$ diagonally embedded.

The hypothesis that $p$ is irrational and strongly non-commensurable  
implies that this splitting is ``generic'' for
Ratner, i.~e.\ $H=\Sl^2\times U$. In other words, the closure
of the unipotent group orbit in that direction contains all surfaces
with the ratios of the splitting pieces fixed. Note that the $2T_\fix
2C$-direction is characterized by the four homologous saddle
connections. Thus we have proved that $Z$ contains a subset of real
dimension $(2*3+1)+2 = 9$ where $2*3$ stands for the dimension of
$\Sl^2$, $1$ for the dimension of $U$ and $2$ for the connection vector of the 
four homologous saddle connections realizing the splitting. We catch
the two missing dimensions as follows.

During the previous discussion, the ratios of the area of the tori
by the area of the cylinder are fixed. Thus one has to vary these two
ratios.

Let us consider the direction $\tilde{p}$ obtained by applying a simple
Dehn twist around the vertical non-simple cylinder of the
$3C$-direction (see Figure~\ref{fig:surface-Ratner:3C}). The direction $\tilde{p}$ is
obviously again of type $2T_\fix 2C$. Since the saddle connections of
a $2T_\fix 2C$-direction are homologous, the splitting in the
direction $\tilde{p}$ still exists in a neighborhood $N$ of
$(X,\omega)$. 

\begin{figure}[htbp]
\psfrag{c1}{$C_1$} \psfrag{c2}{$C_2$}
\psfrag{t1}{$T_1$}\psfrag{t2}{$T_2$}

\begin{center}
\includegraphics[width=2cm]{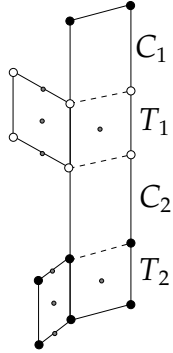}
\end{center}
\caption{
\label{fig:surface-Ratner:3C}
A ``diagonal'' resplitting $2T_\fix 2C$-direction on a $3C$-direction.
}
\end{figure}

We write $(X,\omega) = T_1\ \#\ C_1\ \#\ T_2\ \#\ C_2$ to denote
that $(X,\omega)$ is obtained as the connected sum of the $T_i$
and $C_i$ glued along a fixed set of slits, which is supressed
in the notation. We then know that orbit closure of $(X,\omega)$ 
ontains $U T_1\ \#\ C_1\ \#\
U T_2\ \#\ C_2$, where $U$ is the unipotent vertical element. In
particular for $(u_1,u_2)$ in a neighborhood 
of $(0,0)$ the twisted decomposition persists on
$(X(u_1,u_2),\omega(u_1,u_2)) := U_{u_1} T_1\ \#\ C_1\ \#\
U_{u_2}T_2\ \#\ C_2$. Let us denote the splitting in the twisted
direction by prime $(X'(u_1,u_2),\omega'(u_1,u_2))=T'_1\ \#\ C'_1\ \#\
T'_2\ \#\ C'_2$. One can show (see~\cite{HLM}, Lemma $5.9$) that
$(u_1,u_2)$-twisting can indeed be used to adjust the areas. More
precisely the map
$$
\varphi: 
(u_1,u_2) \mapsto ({\rm area}(T'_1(u_1,u_2))/{\rm area}(C'_1(u_1,u_2)),
{\rm area}(T'_2(u_1,u_2))/{\rm area}(C'_1(u_1,u_2)))
$$
is an invertible function in a neighborhood of $(u_1,u_2)=(0,0)$.
\par
Thus for almost all $(u_1,u_2)$ in a neighborhood of $(0,0)$ with
respect to the Lebesgue measure, we can apply Ratner's theorem in the new splitting
direction. This provides us a subset $N\subset Z$, of positive measure,
consisting of all above possible splitting with varying ratios. Recall
that the geodesic flow is ergodic on the hyperelliptic locus
$\LLL$ (\cite{Mas82,Ve1}). Therefore $Z$ has full measure in $\LLL$ and the lemma is
proven.
\end{proof}

\begin{Lemma} 
\label{2T2Cstronglyindep}
Let us assume that $(X,\omega)\in\LLL$ contains an irrational $2T_\fix
2C$-direction. In addition we assume that the two tori of the
splitting, are strongly non-commensurable. Then the
$\GL^+_2(\R)$-orbit closure of $(X,\omega)$ is $\LLL$.
\end{Lemma}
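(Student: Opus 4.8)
The plan is to reduce Lemma~\ref{2T2Cstronglyindep} to Lemma~\ref{3Cstronglyindep} by producing a $3C$-direction in the $\GL_2^+(\R)$-orbit closure of $(X,\omega)$, rather than redoing the Ratner argument from scratch. So first I would set up the splitting: write $(X,\omega) = T_1\ \#\ C_1\ \#\ T_2\ \#\ C_2$ along the fixed set of slits determined by the four homologous saddle connections in the irrational $2T_\fix 2C$-direction $p$, with the two simple cylinders $C_1, C_2$ exchanged by the hyperelliptic involution. Let $U$ be the cyclic unipotent subgroup of $\Sl$ with eigenvector $p$. Exactly as in the proof of Lemma~\ref{3Cstronglyindep}, $U$ stabilizes the splitting globally and acts on the space of configurations, which here is $\left(\Sl/\textrm{SL}_2(\Z)\right)^2 \times$ (a slit-length parameter and the cylinder data), and by Ratner's theorem the closure of $U\cdot(X,\omega)$ has the form $H\cdot(X,\omega)$ for a closed subgroup $H \supseteq U$.

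Next I would invoke the hypothesis. Because $p$ is irrational (the flow has one or two minimal components on the tori, so the tori contribute genuinely irrational data) and the two tori $T_1, T_2$ are strongly non-commensurable with respect to $p$, the diagonal embedding of $U$ into $\Sl^2$ projects to a dense-orbit situation in each factor and the two factors cannot be ``synchronized'' by any unipotent element with eigenvector $p$; this is precisely the genericity condition for Ratner, so $H = \Sl^2\times U$ (times the remaining trivial directions). Hence $Z := \overline{\Sl\cdot(X,\omega)} \cap \LLL_1$ contains $U T_1\ \#\ C_1\ \#\ U T_2\ \#\ C_2$ — every surface obtained by independently acting on $T_1$ and $T_2$ by $\Sl$ while keeping $C_1, C_2$ and the slits fixed. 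In particular, since the vertical direction on a torus can be made periodic (indeed one can make both $T_1$ and $T_2$ square-tiled, or at least bring each torus to a position where the slit direction becomes a lattice direction), I can choose a surface $(Y,\eta) \in Z$ for which the direction $p$ is now completely periodic: the two tori degenerate to two cylinders and we obtain a completely periodic direction with four cylinders, two of which are simple and exchanged — that is, a completely periodic $2T_\fix 2C$-direction, i.e.\ the configuration of Figure~\ref{cap:conf:list}e.

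From there I would extract a $3C$-direction on $(Y,\eta)$: in the completely periodic $2T_\fix 2C$ configuration the two tori-cylinders together with appropriate Dehn twists can be recombined so that a transverse direction realizes the $3C$ configuration of Figure~\ref{cap:conf:list}d; alternatively, one recognizes directly that a surface in $\LLL_1$ carrying a completely periodic $2T_\fix 2C$-direction also carries, after a suitable unipotent move keeping it in $Z$, a $3C$-direction meeting the irrationality and strong-non-commensurability hypotheses of Lemma~\ref{3Cstronglyindep} — this is exactly the kind of ``resplitting'' bookkeeping that Section~\ref{configs} and the diagonal-twist trick of Lemma~\ref{3Cstronglyindep} are designed to handle. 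Applying Lemma~\ref{3Cstronglyindep} to $(Y,\eta)$ gives $\overline{\GL_2^+(\R)\cdot(Y,\eta)} = \LLL$, and since $(Y,\eta) \in \overline{\GL_2^+(\R)\cdot(X,\omega)}$, we conclude $\overline{\GL_2^+(\R)\cdot(X,\omega)} = \LLL$, using ergodicity of the geodesic flow on $\LLL$ (\cite{Mas82,Ve1}) to upgrade full measure to the whole locus if needed.

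The main obstacle I expect is the transition in the last paragraph: verifying carefully that once we have a completely periodic $2T_\fix 2C$-direction (or a nearby surface in $Z$) we genuinely land on a $3C$-direction satisfying the precise hypotheses of Lemma~\ref{3Cstronglyindep}, and not some other three- or four-cylinder configuration from Figure~\ref{cap:conf:list}. This requires matching the combinatorics of the slits under the hyperelliptic involution and checking that the irrationality/strong-non-commensurability conditions are preserved (or can be re-established by a generic choice within the positive-measure family produced by Ratner's theorem). The Ratner step itself and the identification of the acting group $H$ are essentially the same as in Lemma~\ref{3Cstronglyindep} and should go through with only cosmetic changes; the bookkeeping of configurations is where the real care is needed.
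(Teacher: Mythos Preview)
Your overall strategy---apply Ratner to the unipotent action in the $2T_\fix 2C$-direction $p$, obtain independent $\Sl$-action on the two tori, then reduce to Lemma~\ref{3Cstronglyindep}---is exactly the paper's strategy, and your Ratner step is fine. The divergence is in how you execute the reduction, and your own misgivings about the last paragraph are well-founded.

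The paper does \emph{not} make the direction $p$ completely periodic. Having obtained $H \supseteq \Sl^2$, it applies a unipotent element to each torus independently so as to produce a \emph{transverse} $3C$-direction (cf.\ Figure~\ref{fig:surface-Ratner:3C}), while the original direction $p$ \emph{remains} an irrational $2T_\fix 2C$-direction on the new surface $(Y,\eta)$. A small prior perturbation by a diagonal element of $\Sl$ ensures that strong non-commensurability survives as well. Then $(Y,\eta)$ carries simultaneously a $3C$-direction and, transverse to it, the irrational strongly-non-commensurable $2T_\fix 2C$-direction $p$, which is precisely the input Lemma~\ref{3Cstronglyindep} asks for.

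Your route---first forcing $p$ to be completely periodic, then hunting for a $3C$-direction---discards exactly the irrationality datum that Lemma~\ref{3Cstronglyindep} needs. Once $p$ is periodic, the $2T_\fix 2C$-direction transverse to any $3C$-direction you produce is some \emph{other} direction, and you must re-establish irrationality and strong non-commensurability for it from scratch. That is not impossible (it is roughly what Lemma~\ref{2T2Ccpisok} accomplishes, but under a pairwise-incommensurability hypothesis you have not arranged), yet your sentence about ``recognizing directly'' that such a $3C$-direction exists does not supply the argument, and the appeal to ``the diagonal-twist trick of Lemma~\ref{3Cstronglyindep}'' is circular: that trick already presupposes an irrational $2T_\fix 2C$-direction adjacent to the $3C$-direction. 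The fix is simple and is what the paper does: do not make $p$ periodic; use the $\Sl^2$-freedom only to align a transverse $3C$-direction, and keep $p$ itself as the witness for Lemma~\ref{3Cstronglyindep}.
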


\begin{proof}
The idea is to apply previous Lemma~\ref{3Cstronglyindep}. Let $p$
denote the direction of the $2T_\fix 2C$-direction. As usual, 
let $U$ be the unipotent subgroup of $\Sl$ generated by unipotent elements 
$u$ having $p$ has eigenvector. By Ratner's theorem, in the
$\SL_2(\R)$ orbit closure of $(X,\omega)$, one finds surfaces $H\cdot
(X,\omega)$ where $H$ is a closed unimodular subgroup of $\Sl^2 \times
U$. By hypothesis, $H$ contains at least the product $\Sl^2$. Thus one
can apply a unipotent matrix in each torus independently,
in order to obtain a surface $(Y,\eta)$ in the $\Sl$-orbit closure of
$(X,\omega)$ with a $2T_\fix 2C$-direction and an adequate transverse
$3C$-direction (cf Figure~\ref{fig:surface-Ratner:3C}). 

Of course, by construction, the tori of the $2T_\fix 2C$-direction on
$(Y,\eta)$ are still irrational. By modifying slightly $(X,\omega)$ by
a diagonal matrix in $\Sl$ before applying Ratner's theorem, one can
obtain $(Y,\eta)$ without destroying above properties and in addition
two strongly non-commensurable tori. We then conclude by applying
Lemma~\ref{3Cstronglyindep}.
\end{proof}

\begin{Lemma} 
\label{2T2Cirrat}
If $(X,\omega)\in \LLL$ contains an irrational $2T_\fix
2C$-direction, then the $\GL^+_2(\R)$-orbit closure of $(X,\omega)$ is
$\LLL$.
\end{Lemma}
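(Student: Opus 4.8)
The plan is to reduce to Lemma~\ref{2T2Cstronglyindep} by arguing that, inside the $\Sl$-orbit closure of $(X,\omega)$, one can always find a surface carrying an irrational $2T_\fix 2C$-direction whose two tori are moreover strongly non-commensurable with respect to that direction. Let $p$ denote the given irrational $2T_\fix 2C$-direction, and let $T_1,T_2$ be the two tori of the splitting, $C_1,C_2$ the two simple cylinders exchanged by the hyperelliptic involution. As in the previous lemmas, let $U$ be the cyclic unipotent subgroup of $\Sl$ with eigenvector $p$. The group $U$ stabilizes the splitting and acts componentwise, so it acts on the product $\left(\Sl/\textrm{SL}_2(\Z)\right)^2\times U$ parametrizing the pair of (normalized) tori together with the residual cylinder datum; by Ratner's theorem the closure of the $U$-orbit is $H\cdot(X,\omega)$ for some closed subgroup $H\subseteq \Sl^2\times U$ containing $U$ diagonally.

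The first step is to analyze the possible $H$. Because the direction $p$ is irrational, the vertical flow restricted to each $T_i$ is a minimal (irrational) linear flow; in particular the projection of $H$ to neither $\Sl$-factor can be contained in a conjugate of $\textrm{SL}_2(\Z)$, nor can $H$ lie in the diagonal. The classification of closed subgroups of $\Sl\times\Sl$ containing a generic one-parameter unipotent then leaves essentially two cases: either $H=\Sl^2\times U$, or $H$ is (up to the $U$-factor) the graph of an isomorphism $\Sl\to\Sl$, which forces the two tori $T_1,T_2$ to be commensurable after a unipotent adjustment along $p$ — that is, precisely the failure of strong non-commensurability. In the first case we are done immediately by Lemma~\ref{2T2Cstronglyindep}: $H\supseteq\Sl^2$, so we may apply independent unipotents in each torus and a diagonal matrix to land on a surface with the required strong non-commensurability, then invoke the earlier lemma. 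The main obstacle is the second (``graph'') case, where $H$ is small.

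In the graph case the strategy is to exploit the extra geometry not seen by Ratner on the tori alone: one modifies $(X,\omega)$ by an element of $\Sl$ and passes to a transverse completely periodic direction to break the spurious commensurability. Concretely, since $p$ is an irrational $2T_\fix 2C$-direction, by the configuration list of Section~\ref{configs} the surface also carries transverse periodic directions (e.g.\ apply a Dehn twist in one of the simple cylinders $C_i$, or use a direction realizing a $3C$- or $2T2C_\fix$-configuration as in Figure~\ref{cap:conf:list}), and a generic diagonal deformation $g\in\Sl$ changes the moduli of $T_1$ and $T_2$ by different homotheties because the two tori have different ``shapes'' relative to the slit data unless $(X,\omega)$ is itself a very degenerate (torus- or genus-two-covering) surface — a case excluded here since an irrational, hence non-parabolic, $2T_\fix 2C$-direction cannot occur on such coverings. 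Thus for all $g$ in a full-measure subset of a neighborhood of the identity, the surface $g\cdot(X,\omega)$ still has an irrational $2T_\fix 2C$-direction but its two tori are no longer related by any unipotent in the $p$-direction; applying Lemma~\ref{2T2Cstronglyindep} to $g\cdot(X,\omega)$, whose orbit closure equals that of $(X,\omega)$, yields $\ol{\GL^+_2(\R)\cdot(X,\omega)}=\LLL$. The delicate point, and the step I expect to require the most care, is verifying that the diagonal deformation genuinely destroys commensurability for \emph{some} choice of $g$ — i.e.\ that the locus of ``bad'' $g$ is not all of $\Sl$; this is where one must use that an irrational direction is in particular non-parabolic and invoke the trace-field/holonomy-field obstruction to rule out the covering cases, rather than a purely soft dimension count.
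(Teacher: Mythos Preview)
Your reduction to Lemma~\ref{2T2Cstronglyindep} is the right target, but the argument you give for the ``graph case'' does not work, and there are two further gaps.

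\textbf{The main error.} In the graph case you propose to apply a generic $g\in\Sl$ to $(X,\omega)$ and claim this destroys the commensurability of the two tori. It cannot. Any $g\in\Sl$ acts \emph{simultaneously and identically} on both lattices: the new tori are $g\Lambda_1,\,g\Lambda_2$, the new direction is $gp$, and unipotents with eigenvector $gp$ are exactly $gug^{-1}$ for $u$ unipotent with eigenvector $p$. Since $g$ is area-preserving, $(g\Lambda_i)'=g(\Lambda_i')$, and one checks that $gug^{-1}\cdot(g\Lambda_1)'$ is commensurable to $(g\Lambda_2)'$ if and only if $u(\Lambda_1')$ is commensurable to $\Lambda_2'$. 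Strong non-commensurability with respect to the splitting direction is an $\Sl$-invariant; no global linear deformation will separate the two tori.

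\textbf{Two further issues.} First, ``irrational $2T_\fix 2C$'' only means the direction is not completely periodic, i.e.\ at least one torus is irrational; it does not force both to be. Your Ratner dichotomy ($H=\Sl^2\times U$ versus $H$ a graph) omits the case where $T_2$ is periodic and $H$ projects to a lattice in the second factor. Second, Lemma~\ref{2T2Cirrat} has no trace-field hypothesis and no assumption that $(X,\omega)$ comes from Thurston's construction, so you are not entitled to invoke holonomy-field obstructions to rule out covering cases.

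\textbf{What the paper does instead.} The paper only uses Ratner on the \emph{one} torus $T_1$ known to be irrational: this already gives a full $\Sl$-action on $T_1$ alone inside the orbit closure, so the parameters $(w_2,t_2,b_2,h_2)$ of $T_1$ (subject to the area constraint) become free. With this freedom one exhibits an explicit transverse resplitting into a \emph{new} $2T_\fix 2C$-direction (Figure~\ref{cap:2Tfix2Cirrat}), writes down the two new lattices $\Lambda_1',\Lambda_2'$ as functions of the free parameters, and checks that the conditions ``$\Lambda_i'$ has a vector in direction $p$'' and ``$\Lambda_1',\Lambda_2'$ fail strong non-commensurability'' each cut out a measure-zero set of parameters. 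The point is that the new tori mix pieces of the old $T_1,T_2,C_1,C_2$ in different proportions, so varying $T_1$ alone moves $\Lambda_1'$ and $\Lambda_2'$ independently enough. This geometric resplitting is the missing idea in your proposal.
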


\begin{proof}
The strategy is to use previous Lemma~\ref{2T2Cstronglyindep}. We will
prove that there exists a surface $(Y,\eta)$ in the
$\GL^+_2(\R)$-orbit closure of $(X,\omega)$ with an irrational
$2T_\fix 2C$-direction containing two strongly non-commensurable
tori.

Up to a rotation, we can and will assume that the
$2T_\fix 2C$-direction is vertical as in
Figure~\ref{cap:2Tfix2Cirrat}.
\begin{figure}[htbp]
\label{fig:2Tfix2Cirrat}
\begin{center}
\epsfig{figure=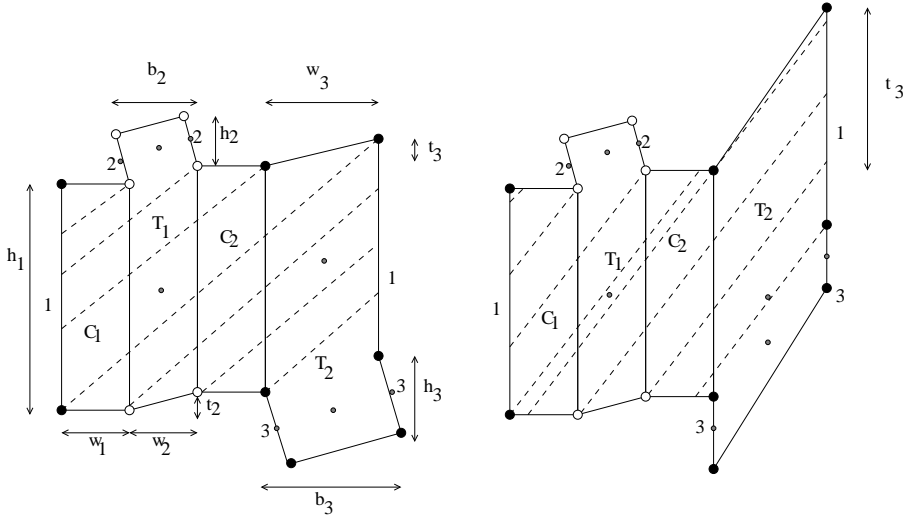,width=12cm} \qquad \qquad  
\caption{A $2T_\fix 2C$-direction with an irrational torus
and a ``diagonal'' resplitting with $p > t_3/w_3$ on the left figure
and $p < t_3/w_3$ on the right figure}
\label{cap:2Tfix2Cirrat}
\end{center}
\end{figure}
We apply the unipotent subgroup $\left( \begin{smallmatrix} 1 & 0 \\ * & 1
\end{smallmatrix}\right)$ to the vertical splitting. Exchanging
the role of $T_1$ and $T_2$ if necessary, one can assume that $T_1$ is
irrational. As in previous lemma, we denote the splitting $(X,\omega)
= C_1\ \#\ T_1\ \#\ C_2\ \#\ T_2$. As usual, using Ratner's theorem
one deduces that the $\Sl$-orbit closure of $(X,\omega)$ contains 
$C_1\ \#\ \Sl\cdot T_1\ \#\ C_2\ \#\ T_2$. Said differently,  the closure
contains all surfaces with the
following properties. All the parameters $w_1$, $w_3$, $h_1$, $h_3$ and $b_3$ 
are as given by the original surface $(X,\omega)$. Note that the torus
$T_2$ is not necessarily periodic. If it is, one has $b_3=w_3$, as
indicated in the right figure.

The parameters $w_2$, $t_2$ and $b_2$ can be chosen arbitrarily as
long as $b_2 > w_2$ and $h_2$ can be chosen to satisfy the volume
condition $(h_1+h_2)\cdot w_2 =A_1$ where $A_1$ is the volume of the
irrational torus $T_1$ of $(X,\omega)$. In particular $w_2$ can be
chosen close to $0$. By flipping the figure and changing the sign of
$t_2$, we may suppose that both $t_2$ and $t_3$ are non-negative. \medskip

Let us denote the splitting in the new direction (prescribed by the
dotted line) by prime. This new direction, by construction, crosses
$k$-times the top of each vertical cylinder $C_i$ as indicated on the
figure ($k=0$ in Figure~\ref{cap:2Tfix2Cirrat} on the left and $k=1$
on the right). This direction has slope
$$
p=\frac{(k+1)h_1+t_2+t_3}{2w_1+w_2+w_3}.
$$
Moreover one can move parameters $w_2$, $t_2$ and $b_2$ in order to
obtain a $2T_\fix 2C$-direction for the new splitting by the following
way. 
There are two cases to consider: $p > t_3/w_3$ (see the left figure)
and for $p<t_3/w_3$ (see the right figure). The case of equality
can be avoided by modifying $t_2$ slightly. Note that using the
hyperelliptic involution it is enough to check the intersection
behaviour for the dotted saddle connections emmanating from the white
singularity on the bottom of $C_2$.
\par
Choosing $w_2$ close to $0$ and $t_2/w_2$ small enough, the line
emanating at the white point has the correct intersection behaviour if
and only if
$$ 
\cfrac{kh_1+t_3}{w_1+w_3} < p < \cfrac{(k+1)h_1+t_3}{w_1+w_3}.
$$
This condition can always be satisfied choosing $k$ suitably
and equality can again be excluded by modifying $t_2$
slightly. Therefore the dotted line provides a new splitting $C'_1\
\#\ T'_1\ \#\ C'_2\ \#\ T'_2$.

Thus, until now, we have found a resplitting in a $2T_\fix 2C$-direction for
all $t_2, \ w_2$ and $h_2$ in a small open intervals.
We will check that for almost all triples $(t_2,w_2,b_2)$, subject to the condition
that $(h_1+h_2)w_2=A_1$ is fixed, this splitting
satisfies the conditions of Lemma~\ref{2T2Cstronglyindep}. Namely, for almost all triples the
two new tori $T'_1$ and $T'_2$ are irrational and strongly non-commensurable.

We give detailed proof for the left figure case; the right figure case
being completely similar. Thus the lattices of the tori (for the
resplitting $2T_\fix 2C$-direction) are 
$$
\Lambda'_1 = \left\langle 
v'_1:=\left(\begin{matrix}
w_2 \\ t_2
\end{matrix} \right),\ 
w'_1:=\left(\begin{matrix}
2w_2 - b_2 + 2w_1 + w_3 \\ (k+1)h_1+h_2+t_2 +t_3 
\end{matrix} \right)
\right\rangle
$$
and 
$$ 
\Lambda'_2 = \left\langle 
v'_2:=\left(\begin{matrix}
w_3 \\ t_3
\end{matrix} \right),\ 
w'_2:=\left(\begin{matrix}
2w_3-b_3 + 2w_1 + w_2 \\ (k+1)h_1+h_3+t_2+t_3 
\end{matrix} \right)
\right\rangle.
$$
One has to make sure that these two lattices do not possess vectors in
the direction of $p$. Moreover we want that $\Lambda_1$ and
$\Lambda_2$ are strongly non-commensurable with respect to $p$ in
order to apply previous lemma. A direct calculation shows that 
if $\Lambda_1$ (respectively $\Lambda_2$) possesses vectors in the
direction of $p$ then there exists $n\in \Z$ such that
$$
\cfrac{n t_2+h_2}{(k+1)h_1+h_3} \in \Q \qquad \left(
\textrm{respectively } \cfrac{n t_3+h_3}{(k+1)h_1+h_2} \in \Q \right).
$$
One gets similar conditions for the strong non-commensurability of the lattices. These two
conditions exclude only a $2$-dimensional set of Lebesque measure (in
$w_2,\ t_2,\ h_2$ and $b_2$ with $(h_1+h_2)w_2=A_1$) equals to
zero. Therefore we have found a resplitting in a $2T_\fix
2C$-direction, for almost all $t_2, \ w_2$ and $h_2$, with two
irrational and strongly non-commensurable tori. We then conclude the
proof of Lemma~\ref{2T2Cirrat} by using Lemma~\ref{2T2Cstronglyindep}.
\end{proof}

\begin{Lemma} 
\label{3Ccpisok}
If $(X,\omega)$ has a $3C$-direction with incommensurable moduli, the
$\SL_2(\R)$ orbit closure of $(X,\omega)$ is $\LLL$.
\end{Lemma}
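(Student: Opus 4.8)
The goal is to reduce a $3C$-direction with incommensurable moduli to the situation of Lemma~\ref{2T2Cirrat}, i.e.\ to produce in the $\GL^+_2(\R)$-orbit closure a surface with an irrational $2T_\fix 2C$-direction. The natural candidate is the transverse $2T_\fix 2C$-direction $p$ coming from the four homologous saddle connections of Figure~\ref{fig:surface-Ratner:3C}, exactly as in Lemma~\ref{3Cstronglyindep}. So the plan is: first, express the moduli (or rather the areas/heights) of the two tori $T_1,T_2$ and the cylinders $C_1,C_2$ of the $2T_\fix 2C$-splitting in terms of the widths and heights of the three cylinders of the $3C$-splitting; second, apply the cyclic unipotent group $U$ with eigenvector equal to the vertical direction of the $3C$-splitting, and use Ratner's theorem inside $\bigl(\Sl/\SL_2(\Z)\bigr)^3$ (the space of triples of normalized lattices, one per $3C$-cylinder) to deform the three cylinder moduli; third, check that the incommensurability of the three moduli guarantees the Ratner orbit closure is as large as possible, so that the cylinder data can be moved to an open set of parameters; fourth, observe that within this open set one can arrange the transverse $2T_\fix 2C$-direction $p$ to be irrational (the torus lattices to have no vector in the direction $p$). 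Then Lemma~\ref{2T2Cirrat} finishes the proof.

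More concretely, I would set up coordinates for Figure~\ref{cap:conf:list}d: three cylinders of widths $w_1,w_2,w_3$ and heights $h_1,h_2,h_3$, all fixed by the involution, with the six saddle connections on their boundaries. Applying the vertical unipotent flow $\left(\begin{smallmatrix}1&0\\ *&1\end{smallmatrix}\right)$ fixes this splitting and acts on each cylinder's "twist'' coordinate; Ratner's theorem says the $U$-orbit closure inside $\bigl(\Sl\times\Sl\times\Sl\bigr)/(\text{lattice})$ (or rather the relevant cylinder-space $(\Sl/\SL_2(\Z))^3\times U$) is $H\cdot(X,\omega)$ for a closed subgroup $H\supseteq U$ diagonally. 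The pairwise incommensurability of the moduli $m_i=h_i/w_i$ is precisely the condition ruling out all intermediate subgroups: if two moduli were commensurable the diagonal $U$ could sit in a smaller product, but a $\Q$-linear relation with all coefficients nonzero forces $H$ to project surjectively onto every factor and onto every pair of factors, hence (by the classification of subgroups containing a generic unipotent, as in \cite{Mc3} and \cite{HLM}) $H$ is the full group compatible with the unimodularity/area constraint. Thus in the orbit closure we get all surfaces with the three cylinder widths and heights replaced by any nearby values of the same total area.

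With this freedom I would then choose new cylinder parameters for which the transverse direction $p$ from Figure~\ref{fig:surface-Ratner:3C} is irrational: writing down the two torus lattices $\Lambda'_1,\Lambda'_2$ of the $2T_\fix 2C$-splitting in terms of $w_i,h_i$ and the connection lengths, one checks that having a lattice vector in direction $p$ is a single algebraic (codimension-one) condition on the parameters, so it is avoided for an open dense set of choices. Hence the orbit closure contains a surface $(Y,\eta)$ with an \emph{irrational} $2T_\fix 2C$-direction, and Lemma~\ref{2T2Cirrat} gives $\ol{\GL^+_2(\R)\cdot(Y,\eta)}=\LLL$, whence the same for $(X,\omega)$.

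The main obstacle I anticipate is the Ratner step: one must verify carefully that pairwise incommensurability (allowing a single $\Q$-relation with all nonzero coefficients) is exactly strong enough to kill every proper intermediate subgroup $H\subsetneq \Sl^3\times U$ containing the diagonal $U$, in particular to prevent $H$ from being a proper "diagonal'' subgroup that would correlate the three tori/cylinders and obstruct adjusting the parameters freely. This is where the precise form of the definition of \emph{pairwise incommensurable} is used, and one should double-check that it matches the hypothesis needed for the subgroup classification (so that $H$ projects onto each $\Sl$ factor and onto each pair, which already suffices to move the cylinder data to an open set). A secondary, but routine, point is the bookkeeping identifying which normalization of the three lattices corresponds to which cylinder and verifying the area constraint is the only global relation.
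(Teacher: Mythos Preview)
Your proposal is correct and follows essentially the same route as the paper: use the unipotent $U$ along the $3C$-direction, invoke Ratner on the product of the three cylinder pieces to move the twist parameters, choose them so that the transverse $2T_\fix 2C$-direction $\tilde p$ of Figure~\ref{fig:surface-Ratner:3C} has an irrational torus, and then apply Lemma~\ref{2T2Cirrat}. The paper's own proof is exactly this, stated in four lines.

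One small clarification: you frame the Ratner step as taking place in $\bigl(\Sl/\SL_2(\Z)\bigr)^3\times U$ and worry about ruling out intermediate subgroups of $\Sl^3$. But the three pieces here are \emph{cylinders}, not tori, so $U$ acts only on the three twist coordinates; the relevant ambient group is $U^3$ (equivalently, the orbit lives on a $3$-torus of twists), not $\Sl^3$. The closure is then governed by Kronecker rather than the full subgroup classification, and incommensurability of the moduli already forces the twist orbit to be at least two-dimensional, which is what you need to avoid the countable family of rationality conditions on the transverse tori. The paper's phrase ``space of triples of normalized lattices'' is the same shorthand, so your write-up matches theirs, but the actual object is simpler than your obstacle paragraph suggests.
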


\begin{proof}
If $p$ denotes the $3C$-direction, let $U$ be the subgroup of $\Sl$
generated by unipotent elements with eigenvector $p$. We will also
denote by $\tilde p$ the transversal $2T_\fix 2C$-direction (see
Figure~\ref{fig:surface-Ratner:3C}). We use Ratner's theorem for $U$
on the space of triples of normalized lattices $\left(
\Sl/\textrm{SL}_2(\Z)\right)^3$ to ensure that the direction $\tilde
p$ possesses an irrational torus. Then we conclude by using
Lemma~\ref{2T2Cirrat}.
\end{proof}

In order to prove similar lemmas for other completely periodic directions,
incommensurability is not strong enough. This will be clear from the
covering constructions in Section~\ref{existdirection}.
In Section~\ref{bouilla} we prove
that if $(X,\omega)$ is a pseudo-Anosov surface and if the
trace field is totally real and has degree $3$ over $\Q$ then any
non-parabolic completely periodic direction with $3$ moduli of
cylinders is pairwise incommensurable.
(Corollary~\ref{commensur}). Hence, in the following lemmas, we can replace
``pairwise incommensurable moduli'' by ``non-parabolic'' for surfaces arising
from Thurston's construction.

\begin{Lemma} 
\label{2T2Ccpisok}
If $(X,\omega)$ has a completely periodic $2T_\fix 2C$-direction with
pairwise in\-commen\-surable moduli, then the $\GL^+_2(\R)$-orbit closure
of $(X,\omega)$ is $\LLL$.
\end{Lemma}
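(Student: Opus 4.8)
The plan is to reduce to Lemma~\ref{2T2Cirrat} by exhibiting, inside the $\GL^+_2(\R)$-orbit closure of $(X,\omega)$, a surface $(Y,\eta)$ carrying an \emph{irrational} $2T_\fix 2C$-direction. The argument runs parallel to the proof of Lemma~\ref{3Ccpisok}, with the completely periodic $2T_\fix 2C$-direction here playing the role of the $3C$-direction there.

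First I would fix the combinatorics. By Theorem~\ref{cylclassH} a completely periodic $2T_\fix 2C$-direction $p$ is the one of Figure~\ref{cap:conf:list}e: cutting along the four homologous saddle connections splits $(X,\omega)$ into two simple cylinders $C_1,C_2$ exchanged by the hyperelliptic involution, of common modulus $m_0$, and two tori $T_1,T_2$; since $p$ is completely periodic, each $T_i$ is a single cylinder in the direction $p$, of some modulus $m_i$. By hypothesis $m_0,m_1,m_2$ are pairwise incommensurable. Let $U\le\Sl$ be the cyclic unipotent subgroup with eigenvector $p$. Then $U$ stabilises the splitting and acts on each of the four cylinders by twisting; as the hyperelliptic involution acts as $-\mathrm{id}$ on $\R^2$ and commutes with $U$, the two simple cylinders are twisted in lockstep, so $U$ really acts on data coming from three cylinders. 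Completing each of these three cylinders to a flat torus by gluing its two boundary circles identifies this data with a point of $\left(\Sl/\SL_2(\Z)\right)^3$, on which $U$ acts by three closed horocycle flows whose periods are governed by $m_0,m_1,m_2$.

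The heart of the matter is then the Ratner step, exactly as in Lemma~\ref{3Ccpisok}: applying Ratner's theorem to $U$ acting on $\left(\Sl/\SL_2(\Z)\right)^3$, the orbit closure of our point is $H$-homogeneous for a closed subgroup $H\supseteq U$, and because $m_0,m_1,m_2$ are pairwise incommensurable — which in particular means $p$ is not parabolic — the projection of this orbit closure to the product of the three closed horocycles is a subtorus of dimension at least two, so $H$ genuinely moves the twists of the $C_i$ and of the $T_i$ in at least two independent ways. One then picks, as in Lemma~\ref{3Cstronglyindep} and Figure~\ref{fig:surface-Ratner:3C}, a transverse direction $\tilde p$ obtained from $p$ by a Dehn twist around one cylinder: it is again of type $2T_\fix 2C$, its four homologous saddle connections persist under small deformation, and the $\tilde p$-slopes of its two tori depend non-trivially on the twist parameters moved by $H$. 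Hence, for a suitable choice inside the orbit closure, at least one of these transverse tori becomes irrational, producing a surface $(Y,\eta)\in\overline{\Sl\cdot(X,\omega)}$ with an irrational $2T_\fix 2C$-direction. Lemma~\ref{2T2Cirrat} applied to $(Y,\eta)$ then gives $\overline{\GL^+_2(\R)\cdot(Y,\eta)}=\LLL$, hence $\overline{\GL^+_2(\R)\cdot(X,\omega)}=\LLL$.

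The step I expect to be the main obstacle is checking that pairwise incommensurability of the moduli really forces the Ratner subgroup $H$ to be large enough (this is precisely where the non-parabolicity hypothesis is consumed: if $p$ were parabolic every transverse direction would stay parabolic and no irrational direction could appear), together with the more routine bookkeeping of verifying that $\tilde p$ can be taken of the correct combinatorial type $2T_\fix 2C$ and that its tori's slopes genuinely vary — so that irrational ones are attained — as the twists range over the positive-dimensional set supplied by $H$. Everything takes place inside the $\Sl$-orbit closure of a point of the closed, $\GL^+_2(\R)$-invariant locus $\LLL$, so the hyperelliptic involution is preserved automatically and no separate check that $(Y,\eta)\in\LLL$ is needed.
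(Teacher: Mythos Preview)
Your proposal is essentially the paper's argument: reduce to Lemma~\ref{2T2Cirrat} by finding, in the unipotent orbit closure, a surface on which a transverse $2T_\fix 2C$-direction has an irrational torus. The paper carries this out concretely rather than abstractly: the resplitting direction is drawn explicitly (Figure~\ref{cap:2Tfix2Cincomm}, not Figure~\ref{fig:surface-Ratner:3C}, which is specific to the $3C$ case), and instead of invoking Ratner on $\left(\Sl/\SL_2(\Z)\right)^3$ the paper just uses Kronecker-type density on the $3$-torus of twist parameters. Pairwise incommensurability is used twice: once to find $u$ with $uw_2+t_2$ and $uw_3+t_3$ simultaneously close to zero (so the resplitting has the correct intersection pattern), and once to produce a one-parameter family in the orbit closure fixing the twist of $C_1$ while moving $t_2,t_3$ with irrational ratio $m_2/m_1$, which lets one escape the single $\Q$-coset on which the new torus would remain periodic. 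These are exactly the two pieces of ``bookkeeping'' you flagged; your outline is correct.
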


\begin{proof}
Using the action of $\Sl$ we may suppose that the 
$2T_\fix 2C$-direction is vertical and that the bottom of
the cylinders $C_i$ is horizontal. 
Suppose that the saddle connections of slope 
$p:=(h_1+t_2+t_3)/(2w_1+w_2+w_3)$ intersect the vertical
saddle connections as drawn in 
Figure~\ref{cap:2Tfix2Cincomm}.
\begin{figure}[htbp]
\label{fig:2Tfix2Cincomm}
\begin{center}
\epsfig{figure=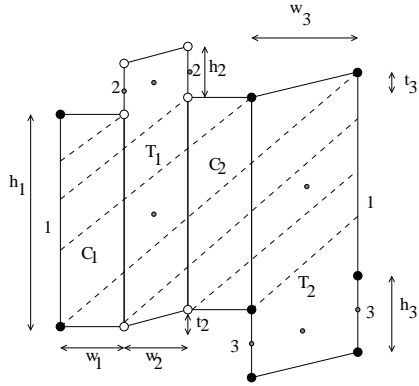,width=5.5cm} \qquad \qquad  
\caption{Resplitting of a $2T_\fix 2C$-direction}
\label{cap:2Tfix2Cincomm}
\end{center}
\end{figure}
Then the direction of slope $p$ is again $2T_\fix 2C$ and we want to
make sure that, say, the torus containing the top of $T_2$ is not periodic.
Sliding the Weierstrass points along the slope $p$ we observe that
this unwanted property holds if
$$ \frac{h_2+h_1}{2p} - \frac{2w_1+w_2+w_3}{2} \in \Q w_3.$$
Equivalently, we have to ensure that
$t_3 + t_2$ does not lie in a coset of $\Q$ in $\R$.
By hypothesis the orbit closure of $(X,\omega)$ under the
vertical unipotents contains a one-parameter subgroup that
fixes $C_1$ and hence $C_2$. 
Let $m_1 = w_1/(h_1+h_2)$ and $m_2 = w_2/(h_1+h_3)$ be the modulus of
$T_1$ and $T_2$ respectively. Simple cylinders persist
under small deformations. For $u \in \R$ small enough,
we may thus replace $t_1$ by $t_1+u$ and $t_2$ by $t_2 + um_2/m_1$
to obtain a surface in the unipotent orbit closure of
$(X,\omega)$ still with a  $2T_\fix 2C$-direction.
Since $m_2/m_1$ is irrational by pairwise incommensurabilty, 
we can choose $u$ in order to avoid the unwanted coset.
We can now apply Lemma~\ref{2T2Cirrat}.

It remains to check that one can always choose $t_2$ and
$t_3$ with the desired intersection property. This property
is surely satisfied for both $t_2$ and $t_3$ close to zero.
Recall that a twist $t_2$ (resp.\ $t_3$) is only well-defined up
to integer multiples $h_1+h_2$ (resp.\ $h_1+h_3$). It suffices
to find the resplitting not on $(X,\omega)$ but on a
surface in the orbit closure of vertical unipotents. Hence
it suffices to find $u \in \R$ such that
$$ 
uw_2 + t_2 \mod h_1+h_2 \quad \text{and} \quad
uw_3 + t_3 \mod h_1+h_3
$$
are both close to zero. Such $u$ exist since $m_1/m_2$ is
irrational by pairwise incommensurability.
\end{proof}

\begin{Lemma} \label{cpcase2T2Cfix}
If $(X,\omega)$ has a completely periodic $2T2C_\fix$-direction 
(as in Figure~\ref{cap:conf:list}f or
Figure~\ref{cap:2T2Cfixeq} (a) below) and such that the moduli $m_i$ 
of the cylinders $\{C_1,C_2,T_1\}$ are pairwise incommensurable 
then the $\GL^+_2(\R)$-orbit closure of $(X,\omega)$ is $\LLL$.
\end{Lemma}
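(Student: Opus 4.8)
The plan is to reduce the case of a completely periodic $2T2C_\fix$-direction to the already-treated $2T_\fix 2C$ case, by producing a surface in the $\GL^+_2(\R)$-orbit closure that carries an irrational $2T_\fix 2C$-direction. Using the $\Sl$-action I may assume the $2T2C_\fix$-direction is vertical, with the two simple cylinders $C_1, C_2$ fixed by the hyperelliptic involution and the complement splitting into two tori $T_1, T_2$ exchanged by the involution (as in Figure~\ref{cap:conf:list}f). Let $U$ be the unipotent subgroup of $\Sl$ with vertical eigenvector. The $U$-action preserves this splitting and acts on the two simple cylinders and the pair of exchanged tori; since the tori are exchanged, $U$ acts ``diagonally'' on the torus part, so the relevant Ratner problem takes place in $\left(\Sl/\textrm{SL}_2(\Z)\right)^2 \times U$ (one factor for each simple cylinder, one diagonal factor for the exchanged pair of tori). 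Pairwise incommensurability of the moduli $m_i$ of $\{C_1, C_2, T_1\}$ guarantees that the unipotent orbit is ``generic'' in the sense of Ratner, so its closure contains all surfaces with the four area ratios of the splitting pieces prescribed; in particular I gain a one-parameter family of twists that I can perform independently on $C_1$, on $C_2$, and on the common torus parameter.

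Next I would pick a transverse direction of slope $p$ that cuts the surface into a $2T_\fix 2C$-configuration: a line through the Weierstrass points that crosses the tops of $C_1$ and $C_2$ some number $k$ of times and exits so that the complement of the two new simple (now exchanged) cylinders is a single genus-two piece — equivalently, a resplitting as in the ``irrational $2T2C_\fix$'' picture (Figure~\ref{cap:2T2Cfixuneq}a) whose complement is connected rather than two tori. As in the proof of Lemma~\ref{2T2Cirrat}, one checks by an explicit intersection-behaviour computation that for $p$ in an appropriate interval (with $k$ chosen suitably, equality excluded by wiggling a twist parameter) the dotted line genuinely realizes a $2T_\fix 2C$-splitting; then one writes down the two lattices $\Lambda'_1, \Lambda'_2$ of the new tori in terms of the widths, heights, and twists, exactly as in Lemma~\ref{2T2Cirrat}. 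Having the freedom (from the previous paragraph) to move the twist parameters within open intervals while keeping the areas under the volume constraint, a direct calculation shows that the conditions ``$\Lambda'_i$ has a vector in the direction $p$'' and ``$\Lambda'_1, \Lambda'_2$ are commensurable with respect to $p$'' each cut out only a measure-zero subset of the parameter space. Hence for almost every choice of twists I obtain a surface $(Y,\eta)$ in the orbit closure with an irrational, strongly non-commensurable $2T_\fix 2C$-direction, and I conclude by Lemma~\ref{2T2Cirrat} (equivalently Lemma~\ref{2T2Cstronglyindep}) that $\ol{\GL^+_2(\R)\cdot(Y,\eta)} = \LLL$, whence $\ol{\GL^+_2(\R)\cdot(X,\omega)} = \LLL$.

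The main obstacle I expect is the combinatorial/geometric bookkeeping of the resplitting: because the two simple cylinders here are \emph{fixed} (not exchanged) by the hyperelliptic involution while the complementary tori are \emph{exchanged}, the roles of the pieces are swapped relative to Lemma~\ref{2T2Cirrat}, and one must be careful that a transverse line through the Weierstrass points actually produces two \emph{exchanged} simple cylinders with a connected genus-two complement, rather than some other configuration from Figure~\ref{cap:conf:list}. Verifying the exact inequalities on the slope $p$ that force the correct intersection pattern — and checking that the parameter freedom supplied by Ratner genuinely allows $p$ to land in the required interval after possibly adjusting $k$ — is the delicate point; once the configuration is pinned down, the irrationality and non-commensurability genericity argument is essentially identical to the one already carried out in Lemma~\ref{2T2Cirrat}, so the remaining steps are routine.
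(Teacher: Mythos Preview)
Your overall strategy --- use the vertical unipotent freedom coming from pairwise incommensurability to adjust twists, find a transverse $2T_\fix 2C$-direction with an irrational torus, and then invoke Lemma~\ref{2T2Cirrat} --- is exactly the paper's route. Two points need fixing.

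First, your Ratner setup is garbled. In a \emph{completely periodic} $2T2C_\fix$-direction the ``tori'' $T_1,T_2$ are themselves periodic cylinders, so there are no genuine $\Sl/\SL_2(\Z)$-factors in play; the vertical unipotent simply shears each of the three cylinder orbits under the hyperelliptic involution by an amount proportional to its modulus. The relevant closure problem lives in a $3$-torus of twists, and pairwise incommensurability of $m_{C_1},m_{C_2},m_{T_1}$ makes the $U$-orbit at least $2$-dimensional there. That is all one actually uses: the ability to bring two twist parameters (say $t_2,t_3$) into a prescribed open set while normalising a third --- exactly as in Lemma~\ref{2T2Ccpisok}.

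Second, and more substantively, the paper needs a case distinction that your sketch collapses: $h_2=h_3$ versus $h_2\neq h_3$ (the heights of the two fixed simple cylinders). When $h_2\neq h_3$ there is a direct one-step resplitting into a $2T_\fix 2C$-direction (Figure~\ref{cap:2T2Cfixuneq}(b)); one of the new tori avoids $C_2$ entirely, and a single twist adjustment makes it irrational. When $h_2=h_3$ that direct resplitting degenerates, and the paper instead performs a two-step manoeuvre: a $k$-fold Dehn twist first produces an \emph{irrational} $2T2C_\fix$-direction whose complement is the connected genus-two piece of Figure~\ref{cap:2T2Cfixeq}(b), and only then does a second slope $\tilde p$ (Figure~\ref{cap:2T2Cfixuneq}(a)) cut that piece into a $2T_\fix 2C$, subject to a pair of inequalities in $k$ and an auxiliary integer $m$ that one has to solve. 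Your sketch alludes to both pictures but merges them into a single step; as written it does not establish that one transverse slope $p$ with the claimed intersection behaviour exists uniformly in both cases. Separating the two height cases and treating them as above is the missing ingredient.
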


\begin{proof}
We distinguish two cases. Suppose the $2T2C_\fix$-direction 
is vertical as in Figure~\ref{cap:2T2Cfixeq} (a).
\begin{figure}[htbp]
\label{fig:2T2Cfixeq}
\begin{center}
\psfrag{gg}{$h_2+t_2+t_3$}
\subfigure[]{\epsfig{figure=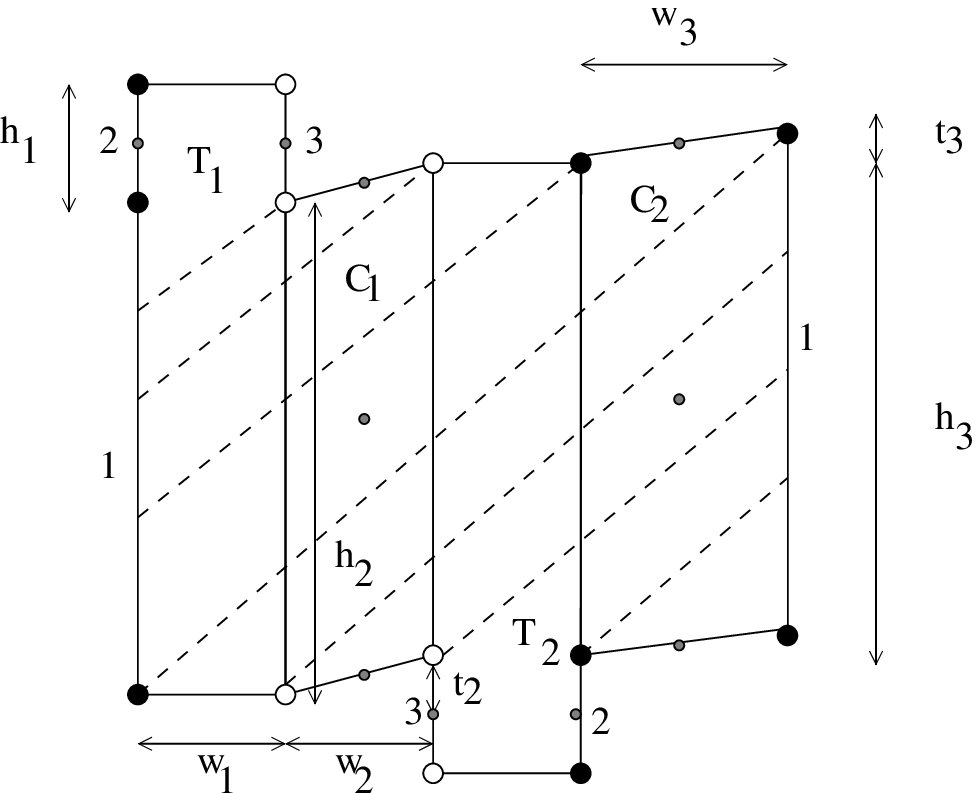,width=5.5cm}} \qquad \qquad  
\subfigure[]{\epsfig{figure=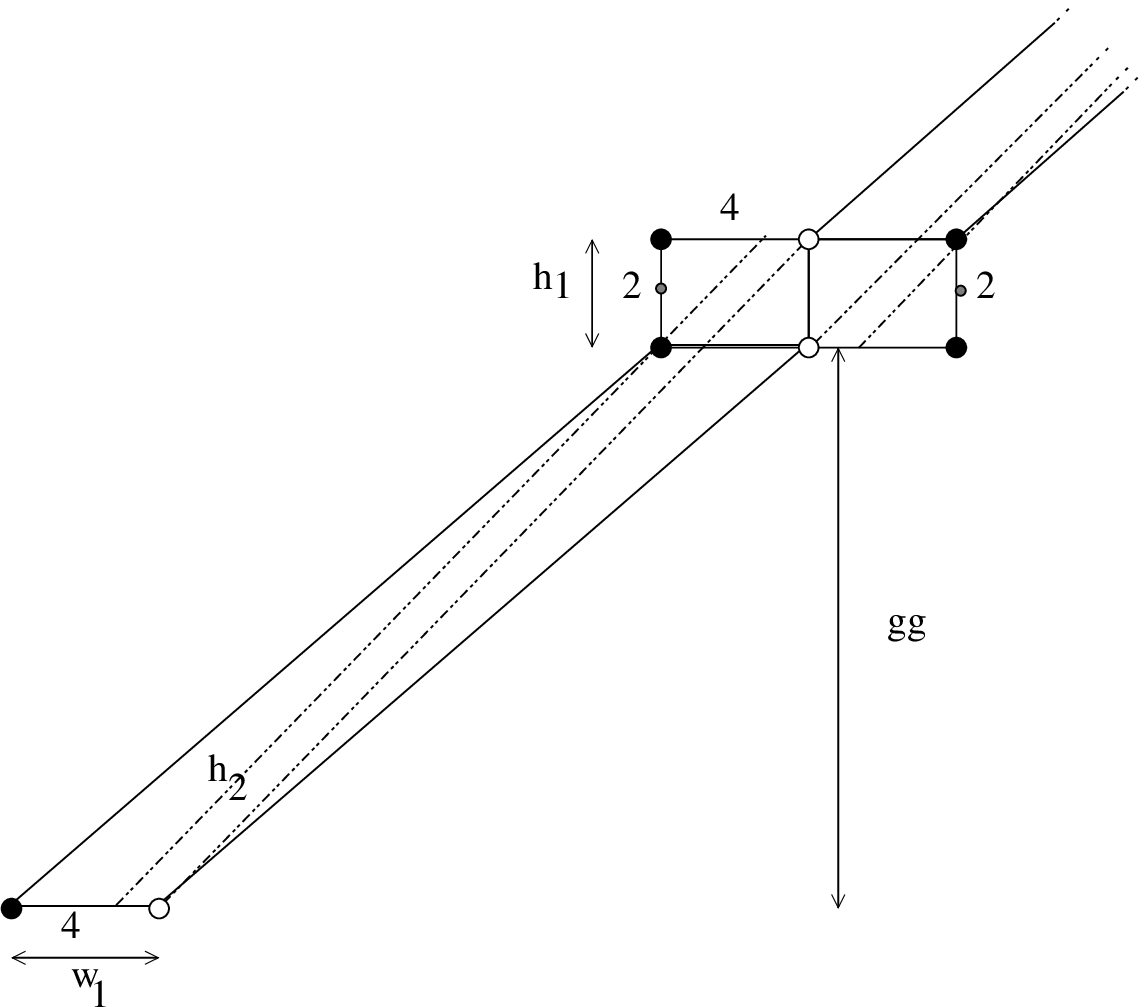,width=5.5cm}} 
\caption{Resplitting of a $2T2C_\fix$-direction
with $h_2=h_3$}
\label{cap:2T2Cfixeq}
\end{center}
\end{figure}
We first treat the case $h_2 = h_3$ as suggested by the figure.
This is quite analogous to the case of a $2T2C_\fix$-direction:
Using pairwise incommensurability we may suppose that $t_2$ and $t_3$
are both close to zero such that the indicated splitting exists.
Instead of resplitting in the direction of a 'simple Dehn twist
around the almost-cylinder of height $h_2$' we may also use
a $k$-fold Dehn twist, for $t_2$ and $t_3$ even closer (depending
on $k$) to zero. All these resplitting directions are irrational
 $2T2C_\fix$-directions and the complement of the two cylinders
looks as in Figure~\ref{cap:2T2Cfixeq} (b). The dotted lines
have slope
$$
\tilde{p} = \frac{h_2+t_2+t_3}{k(2w_1+w_2+w_3)-w_1}.
$$
If for some $m$
$$
\frac{h_1}{2mw_1} > \tilde{p} > \frac{h_1}{(2m+1)w_1},
$$
the direction of the dotted lines have $4$ homologous saddle
connections. Consequently $\tilde{p}$ is a a $2T_\fix 2C$-direction,
as illustrated in Figure~\ref{cap:2T2Cfixuneq} (a).

If we let $w:=2w_1+w_2+w_3$, the above condition for $\tilde{p}$
may be rephrased as
$$ \frac{k}{2m} \frac{wh_1}{w_1} > h_2 + t_2+t_3
+ \frac{h_1}{2mw_1} \quad \text{and} \quad
h_2 + t_2+t_3
+ \frac{h_1}{(2m+1)w_1} > \frac{k}{2m+1} \frac{wh_1}{w_1}.$$

If we choose $k/2m$ close to $h_2w_1/w h_1$ and $m$ large enough,
the inequalities are satisfied for $t_2$ and $t_3$ small enough.
This and the existence of the $k$-fold Dehn twist for
the chosen $k$ given two bounds for the size of $t_2$ and $t_3$.

If $t_2$ and $t_3$ chosen below both bounds and sufficiently
irrational, the direction $\tilde{p}$ is an irrational 
$2T_\fix 2C$-direction. Now Lemma~\ref{2T2Cirrat} applies.

We now treat the case that  $h_2 \neq h_3$, say $h_2 > h_3$. Consider the 
resplitting of Figure~\ref{cap:2T2Cfixuneq}.
\begin{figure}[htbp]
\label{fig:2T2Cfixuneq}
\begin{center}
\subfigure[]{\epsfig{figure=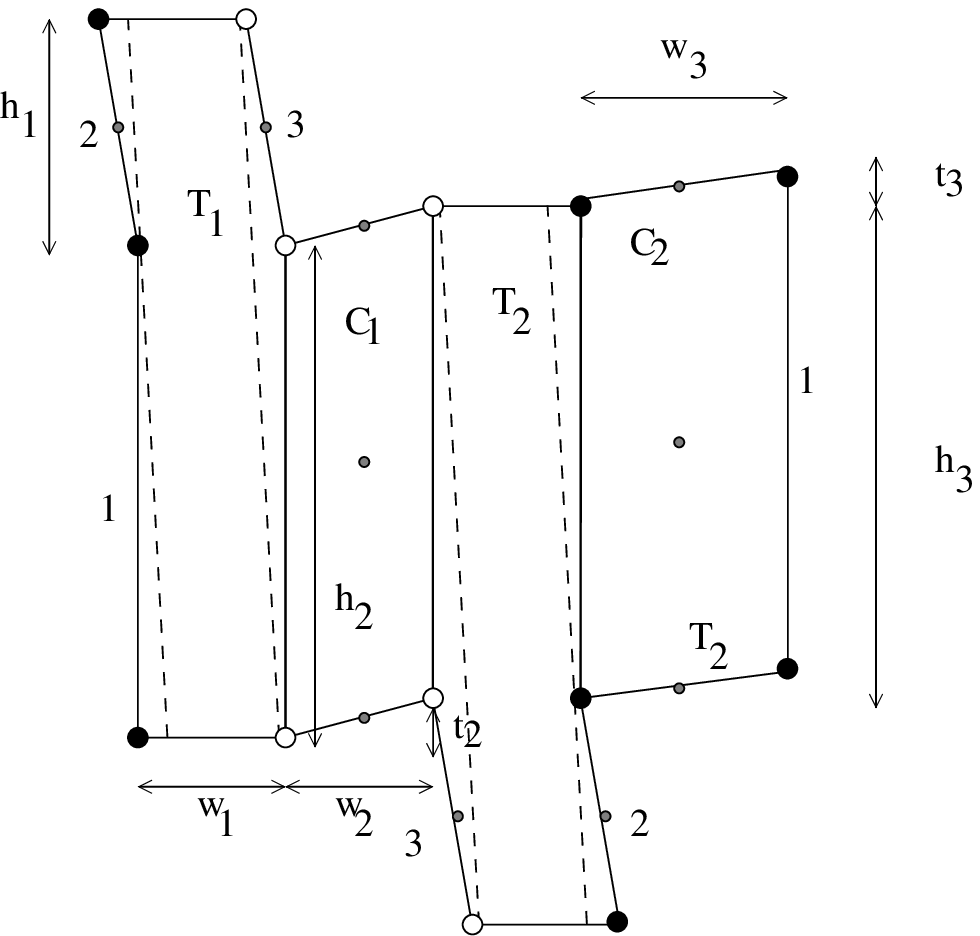,width=6cm}} \qquad \qquad  
\subfigure[]{\epsfig{figure=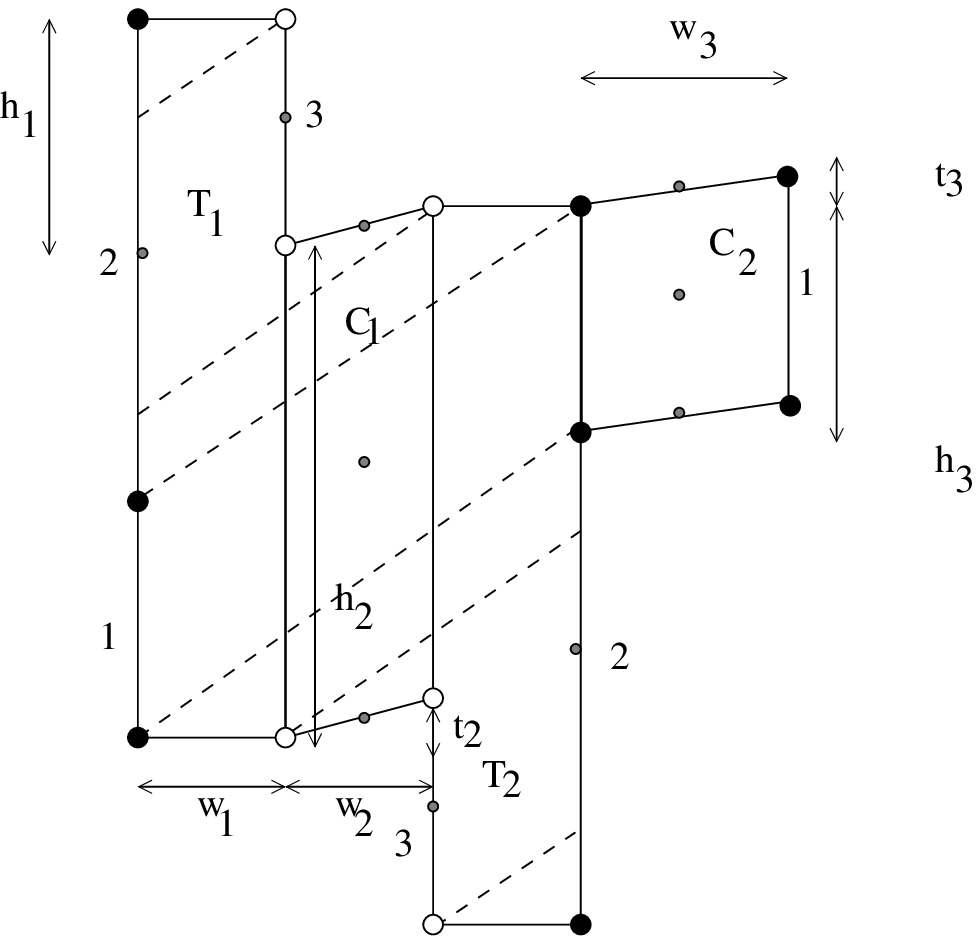,width=7cm}} 
\caption{(a) $2T_\fix 2C$-direction on an irrational
$2T2C_\fix$-direction and (b) resplitting of a $2T2C_\fix$-direction
with $h_2 \neq h_3$.}
\label{cap:2T2Cfixuneq}
\end{center}
\end{figure}
This is a $2T_\fix 2C$-direction $p$. Using incommensurability
and the unipotent action in the vertical direction we may
pass to an element in the unipotent orbit closure of $(X,\omega)$
and adjust $t_2$ to any suitable value, preserving the bottom
of $T_1$ horizontal.
The parameter $t_3$ will change, too, but this doesn't matter.
We can find $t_2$ such that the torus in the direction $p$ whose
intersection with $C_2$ is empty, is irrational. Lemma~\ref{2T2Cirrat}
applies.
\end{proof}

\begin{Lemma} \label{cpcase4fix1}
If $(X,\omega)$ has a completely periodic direction as in
Figure~\ref{cap:conf:list}g or Figure~\ref{cap:4Cylincomm} below
and such that the moduli $m_i$ of the cylinders $\{C_1,C_2,C_3\}$ are 
pairwise incommensurable then the $\GL^+_2(\R)$-orbit closure 
of $(X,\omega)$ is $\LLL$.
\end{Lemma}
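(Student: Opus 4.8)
The plan is to follow the pattern of Lemmas~\ref{2T2Ccpisok} and~\ref{cpcase2T2Cfix}: from the four-cylinder direction of Figure~\ref{cap:conf:list}g (or of Figure~\ref{cap:4Cylincomm}) we manufacture, inside the $\GL^+_2(\R)$-orbit closure of $(X,\omega)$, a surface carrying an \emph{irrational} $2T_\fix 2C$-direction, and then we quote Lemma~\ref{2T2Cirrat}.

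First I would normalise by an element of $\SL_2(\R)$ so that the four-cylinder direction is vertical and the bottoms of all the cylinders are horizontal. Denote by $C_1$ the (necessarily fixed) simple cylinder, by $C_2$ the other fixed cylinder, and by $C_3$ one of the two cylinders exchanged by the hyperelliptic involution, so that $m_1,m_2,m_3$ are the three moduli appearing in the statement; write $w_i,h_i,t_i$ for the widths, heights and twists. Applying the vertical unipotent subgroup $U$ and invoking Ratner's theorem~\cite{Ra} for the closure of the $U$-orbit inside the product of circle groups parametrising the twists — exactly as in the proof of Lemma~\ref{2T2Ccpisok} — pairwise incommensurability of $m_1,m_2,m_3$ forces this closure to be a subgroup of dimension at least two. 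Consequently, passing to a surface in the $U$-orbit closure of $(X,\omega)$, I may move the twists $t_2$ and $t_3$ within a subgroup of positive dimension (in particular make them both small), the bottoms of the cylinders staying horizontal.

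Next I would exhibit the resplitting direction. As in the previous lemmas, I would consider the family of directions $\tilde p$ obtained by a $k$-fold Dehn twist around one of the non-simple cylinders; for $t_2,t_3$ small enough (depending on $k$) the four mutually homologous saddle connections bounding two simple cylinders exchanged by the involution persist. A direct inspection of their intersection pattern with the vertical saddle connections — controlled, just as in the proof of Lemma~\ref{cpcase2T2Cfix}, by two explicit inequalities of the form $h_1/((2m+1)w_1) < \tilde p < h_1/(2mw_1)$ that are met by choosing $k$ and $m$ suitably, with a subcase according to whether the two heights involved coincide — shows that $\tilde p$ is a $2T_\fix 2C$-direction; this is the situation drawn in Figure~\ref{cap:4Cylincomm}. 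Finally, using the twist freedom secured in the previous step, I would choose the twists sufficiently irrational so that one of the two tori of this $2T_\fix 2C$-direction is irrational: the vectors generating the lattices of these tori depend affinely on the twist parameters, so the locus where such a torus has a period in the direction $\tilde p$ is a countable union of proper affine subspaces, hence of measure zero. Thus $(X,\omega)$ has, in its $\GL^+_2(\R)$-orbit closure, a surface with an irrational $2T_\fix 2C$-direction, and Lemma~\ref{2T2Cirrat} concludes the proof.

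The step I expect to be the main obstacle is the geometric bookkeeping of the resplitting step: Figure~\ref{cap:conf:list}g carries more cylinders and saddle connections than the configurations handled so far, so pinning down the correct four homologous saddle connections realising a $2T_\fix 2C$-splitting, and checking the intersection combinatorics uniformly — including the subcases forced by the relative sizes of the $h_i$, in the spirit of the $h_2=h_3$ versus $h_2\neq h_3$ dichotomy of Lemma~\ref{cpcase2T2Cfix} — is where the real work lies. Everything downstream of that point is by now routine, since Lemma~\ref{2T2Cirrat} absorbs the remaining dynamics.
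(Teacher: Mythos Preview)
Your overall strategy is exactly that of the paper: manufacture an irrational $2T_\fix 2C$-direction on a surface in the orbit closure and then invoke Lemma~\ref{2T2Cirrat}. However, you are anticipating difficulties that do not arise, by modelling your argument on the harder Lemma~\ref{cpcase2T2Cfix} rather than on the easier Lemma~\ref{cpcase3fix3}.

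The paper's proof is considerably shorter than your outline. After normalising so that the four-cylinder direction is vertical and the bottom of $C_1$ is horizontal, pairwise incommensurability already lets one choose the single twist $t_2$ arbitrarily in the unipotent orbit closure; there is no need to control both $t_2$ and $t_3$ simultaneously. Moreover, there is a \emph{single} resplitting direction (the dotted line of Figure~\ref{cap:4Cylincomm}) that is a $2T_\fix 2C$-direction outright: no family of $k$-fold Dehn twists, no double inequality in $m$, and no dichotomy on the relative sizes of the heights is required. One of the resulting tori does not meet $C_3$, so its lattice depends affinely on $t_2$ alone, and a generic choice of $t_2$ makes it irrational. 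That is the whole argument.

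So your plan would work, but the ``main obstacle'' you flag is illusory here: the configuration of Figure~\ref{cap:conf:list}g is more accommodating than that of Figure~\ref{cap:conf:list}f, not less.
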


\begin{proof}
We may suppose that the completely periodic direction is vertical.
The unipotent orbit closure of $(X,\omega)$ contains all surfaces as in 
Figure~\ref{cap:4Cylincomm}, where the bottom of $C_1$ is
horizontal and $t_2$ can be chosen arbitrarily.
\begin{figure}[htbp]
\label{fig:4Cylincomm}
\begin{center}
\epsfig{figure=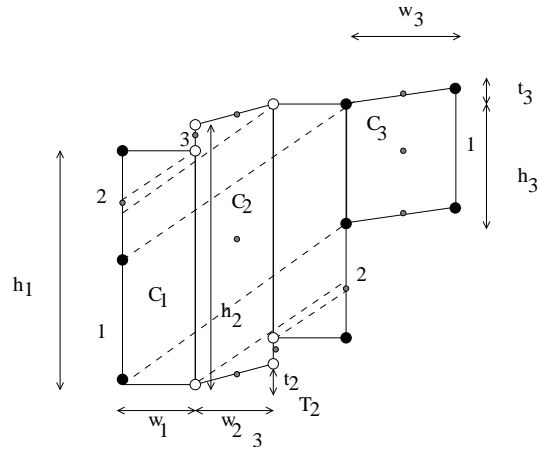,width=7cm} \qquad \qquad  
\caption{Resplitting of a direction
with $4$ cylinders only one of which is simple}
\label{cap:4Cylincomm}
\end{center}
\end{figure}
The resplitting give by the dotted lines is a $2T_\fix 2C$-direction $p$.
For suitable choice of $t_2$ the torus  in the direction $p$ that
does not intersect $C_3$ is irrational. We can now apply Lemma~\ref{2T2Cirrat}.
\end{proof}

\begin{Lemma} \label{cpcase3fix3}
If $(X,\omega)$ has a completely periodic direction as in
Figure~\ref{cap:conf:list}c or Figure~\ref{cap:3fix3incomm} below
and such that the moduli $m_i$ of the three cylinders are 
pairwise incommen\-surable, then the $\GL^+_2(\R)$-orbit closure 
$(X,\omega)$ is $\LLL$.
\end{Lemma}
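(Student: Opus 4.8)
The plan is to treat the $3C_\fix$-configuration exactly in the spirit of Lemmas~\ref{2T2Ccpisok}, \ref{cpcase2T2Cfix} and~\ref{cpcase4fix1}: exhibit in the $\GL^+_2(\R)$-orbit closure of $(X,\omega)$ a surface carrying an \emph{irrational} $2T_\fix 2C$-direction and then quote Lemma~\ref{2T2Cirrat}. First I would use the $\Sl$-action to rotate the given completely periodic direction to the vertical and to normalize $(X,\omega)$ to the shape of Figure~\ref{cap:3fix3incomm}: three cylinders $C_1,C_2,C_3$, each \emph{non}-simple and fixed by the hyperelliptic involution (the only possibility for three fixed cylinders, by Theorem~\ref{cylclassH} and its proof), of widths $w_i$, heights $h_i$ and twists $t_i$, with two Weierstrass points on each core and with the bottom of $C_1$ horizontal. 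On this picture one draws a slanted family of candidate saddle connections through the appropriate Weierstrass points; their common slope $p$ is an explicit ratio of an integer combination of $h_1,h_2,h_3$ (plus $t_2+t_3$) by an integer combination of $w_1,w_2,w_3$, the integers depending on a wrapping number $k$ that records how often each slanted segment passes over the tops of the vertical cylinders. Choosing $k$ so that $p$ falls in a suitable interval determined by the $h_i$ and $w_i$, these four segments become homologous saddle connections cutting $X$ into two simple cylinders exchanged by the involution and two complementary tori fixed by it, that is, a $2T_\fix 2C$-direction; as usual the hyperelliptic symmetry reduces the intersection check to the segments emanating from the white singularity.

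Next I would move inside the orbit closure of $(X,\omega)$ under the vertical unipotents. That flow twists $C_i$ at rate proportional to $w_i$, so the attainable twist vectors $(t_1,t_2,t_3)$ fill a coset of a subtorus of dimension $\geq 2$ inside $\R/h_1\Z\times\R/h_2\Z\times\R/h_3\Z$, by pairwise incommensurability of the moduli $m_1,m_2,m_3$. As in Lemma~\ref{cpcase2T2Cfix}, this already suffices to reach, within the orbit closure and while keeping the bottom of $C_1$ horizontal, surfaces in which $t_2$ takes any prescribed value (the companion value of $t_3$ being irrelevant), both the $3C_\fix$-decomposition and --- simple cylinders being stable under small deformations --- the transverse $2T_\fix 2C$-splitting persisting. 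The slope $p$ and the lattice of, say, the complementary torus of the $p$-direction that is disjoint from $C_3$ are affine functions of $t_2$; the condition that $p$ be a periodic direction on that torus confines $t_2$ to a countable union of cosets of $\Q$ in $\R$, which --- exactly as in Lemmas~\ref{2T2Cirrat} and~\ref{cpcase2T2Cfix} --- we can avoid precisely because a ratio of the moduli is irrational. This produces a surface $(Y,\eta)$ in the $\GL^+_2(\R)$-orbit closure of $(X,\omega)$ with an irrational $2T_\fix 2C$-direction, so Lemma~\ref{2T2Cirrat} applies and yields $\ol{\GL^+_2(\R)\cdot(X,\omega)}=\LLL$.

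\textbf{The main obstacle} is the first step: pinning down the correct resplitting of the $3C_\fix$-picture and checking that, for a suitable wrapping number $k$ and for $t_2,t_3$ small enough, the slanted direction really is of $2T_\fix 2C$-type. As in Lemma~\ref{cpcase2T2Cfix} one should expect to split into a few subcases according to the relative sizes (or the ordering) of the heights $h_1,h_2,h_3$, and possibly to insert a preliminary $k$-fold Dehn twist along one of the $C_i$ so that the inequalities controlling the combinatorial type of the resplitting can be met while keeping $t_2$ and $t_3$ as small --- hence as free to perturb --- as the argument requires. Once the resplitting picture is fixed, the unipotent-closure and genericity steps are word for word as in the preceding lemmas and raise no new difficulty.
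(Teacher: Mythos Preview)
Your proposal is correct and follows the same overall strategy as the paper: produce in the orbit closure a surface with an irrational $2T_\fix 2C$-direction and invoke Lemma~\ref{2T2Cirrat}. The difference lies entirely in the first step, where you anticipate more work than is actually needed.

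The paper avoids the wrapping number $k$, the subcases on the relative sizes of the $h_i$, and the preliminary Dehn twists altogether. Instead it uses pairwise incommensurability \emph{twice}, exactly as in Lemma~\ref{2T2Ccpisok}. First, with $C_1$ untwisted, one observes that for $t_2$ and $t_3$ both sufficiently small (but non-zero) the straightforward diagonal resplitting of Figure~\ref{cap:3fix3incomm} is already a $2T_\fix 2C$-direction $p$, with no combinatorial choices to make. Second, the condition that $p$ have a rational torus is that $t_2+t_3$ lie in a single coset of $\Q$ in $\R$. Both requirements---$t_2,t_3$ simultaneously close to zero, and $t_2+t_3$ avoiding the bad coset---are arranged in the vertical unipotent orbit closure by the two arguments of Lemma~\ref{2T2Ccpisok}: the first uses that $m_2/m_3$ is irrational (so $(uw_2+t_2 \bmod h_2,\,uw_3+t_3 \bmod h_3)$ equidistributes and can be made small in both coordinates), the second uses irrationality of another moduli ratio to perturb $t_2+t_3$ off the coset.

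Your version, which fixes only $t_2$ and treats $t_3$ as uncontrolled, forces you to compensate with the extra parameter $k$ and to worry about the intersection combinatorics for arbitrary $t_3$; this is workable but heavier. The paper's route buys a one-paragraph proof with no case analysis.
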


\begin{proof}
Suppose the periodic direction is vertical as in 
Figure~\ref{cap:3fix3incomm}. 
\begin{figure}[htbp]
\label{fig:3fix3incomm}
\begin{center}
\epsfig{figure=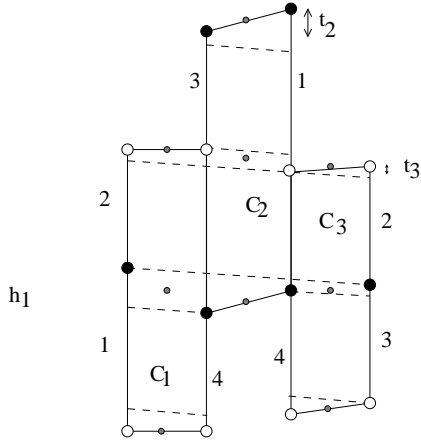,width=5.5cm} \qquad \qquad  
\caption{Resplitting of a direction with $3$ fixed cylinders}
\label{cap:3fix3incomm}
\end{center}
\end{figure}
Using a vertical unipotent, we may suppose that $C_1$ is 
untwisted, i.e.\ has a horizontal saddle connection.  For $t_2$ and $t_3$ 
sufficiently small but non-zero, the resplitting by dotted lines in 
Figure~\ref{cap:3fix3incomm} is a $2T_\fix 2C$-direction $p$. Moreover, 
for $t_2+t_3$ outside a coset of $\Q$ in $\R$ the direction 
$p$ has an irrational torus. We can arrange both conditions 
for a surfaces in the vertical unipotent orbit closure of $(X,\omega)$
by the same two arguments as in Lemma~\ref{2T2Ccpisok}.
\end{proof}

\section{Surfaces arising from 
Thurston's construction}
\label{bouilla}

In the whole section $(X,\omega)$ will be a surface with two transversal
parabolic directions belonging to the hyperelliptic locus $\LLL$. 
We prove that if $(X,\omega)$ is not a Veech surface for the most
obvious reason and if the pseudo-Anosov diffeomorphism is of a type that
can only arise for $g \geq 3$, 
then its $\GL^+_2(\R)$ orbit closure is large.

\begin{Theorem} 
\label{bouillaorbitclosure}
Let $(X,\omega)\in \LLL$ be a surface given by Thurston's
construction with trace field of degree $3$. By
Theorem~\ref{completely-periodic} this surface has infinitely many
completely periodic directions. Suppose that one of them is not
parabolic. Then
$$
\ol{\GL^+_2(\R)\cdot(X,\omega)} = \LLL.
$$
\end{Theorem}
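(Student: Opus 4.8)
The plan is to reduce everything to the reduction lemmas of Section~\ref{reductionsteps}. By hypothesis $(X,\omega)$ arises from Thurston's construction, so it is stabilized by a pseudo-Anosov diffeomorphism, and by hypothesis it has a completely periodic direction $\theta$ which is \emph{not} parabolic. By Theorem~\ref{completely-periodic} the set of completely periodic directions is dense; in particular $\theta$ is one of them and, being non-parabolic, its cylinders have moduli $m_i$ that are not all commensurable. The first step is to invoke the classification of completely periodic configurations from Theorem~\ref{cylclassH} (Figure~\ref{cap:conf:list}): the direction $\theta$ realizes one of the listed configurations with two, three or four cylinders. I would then go case by case through the possible configurations, in each case applying the matching reduction lemma from Section~\ref{reductionsteps}.

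More precisely: if $\theta$ is a $3C$-direction (Figure~\ref{cap:conf:list}d) with incommensurable moduli, Lemma~\ref{3Ccpisok} gives the conclusion directly. If $\theta$ is a completely periodic $2T_\fix2C$-direction (Figure~\ref{cap:conf:list}e), apply Lemma~\ref{2T2Ccpisok}; if it is a $2T2C_\fix$-direction (Figure~\ref{cap:conf:list}f), apply Lemma~\ref{cpcase2T2Cfix}; the four-cylinder configuration with one simple cylinder (Figure~\ref{cap:conf:list}g) is handled by Lemma~\ref{cpcase4fix1}, and the three-fixed-cylinder configuration (Figure~\ref{cap:conf:list}c) by Lemma~\ref{cpcase3fix3}. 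The remaining configurations (Figures~\ref{cap:conf:list}a,~\ref{cap:conf:list}b and the two-cylinder cases h--k) either have fewer than three cylinders, or have three cylinders but in a shape for which one first passes, via a unipotent twist in the direction $\theta$ together with Ratner's theorem on the appropriate power of $\Sl/\SL_2(\Z)$, to a surface in the $\Sl$-orbit closure carrying one of the configurations already treated; configurations with only one or two cylinders in fact cannot have non-parabolic moduli of the required kind on a degree-$3$ pseudo-Anosov surface — by Thurston's construction a one-cylinder decomposition forces the Veech group to be cyclic parabolic or commensurable to $\SL_2(\Z)$ (the Remark after Theorem~\ref{cylclassH}), and a similar analysis rules out the genuinely new phenomena in the two-cylinder cases. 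The crucial glue making all these lemmas applicable is that "pairwise incommensurable" may be replaced by "non-parabolic": this is exactly Corollary~\ref{commensur}, which uses that the trace field is totally real of degree $3$, so that a non-parabolic completely periodic direction with three cylinders automatically has pairwise incommensurable moduli.

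The main obstacle is the bookkeeping across configurations: one must check that \emph{every} configuration in Figure~\ref{cap:conf:list} that a non-parabolic completely periodic direction of a degree-$3$ pseudo-Anosov surface can realize is covered by one of Lemmas~\ref{3Ccpisok}--\ref{cpcase3fix3}, possibly after a preliminary Ratner step to jump to a better configuration, and that the degree-$3$ hypothesis genuinely excludes the remaining low-cylinder-number cases (which is where covering examples from genus one or two would otherwise sneak in, as flagged in the statement of Theorem~\ref{closure}). Granting Theorem~\ref{cylclassH}, Corollary~\ref{commensur}, and the reduction lemmas, no further hard analysis is needed — the argument is a finite case-check assembling results already in hand.
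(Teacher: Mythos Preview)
Your overall plan matches the paper's argument: enumerate configurations via Theorem~\ref{cylclassH}, apply the matching reduction lemma from Section~\ref{reductionsteps} for each, and use Lemma~\ref{commensur} to convert ``non-parabolic'' into ``pairwise incommensurable''. For configurations (c)--(g) this is exactly what the paper does.

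The weak spot is how you dispose of configurations (a), (b), and (h)--(k). For (a) and (b) you propose a preliminary Ratner step to jump to a better configuration, but this is neither what the paper does nor obviously workable: these configurations have one fixed cylinder and two \emph{exchanged} cylinders, hence only two distinct moduli, so the pairwise-incommensurability hypothesis of the reduction lemmas is not even well-posed, and no reduction lemma in Section~\ref{reductionsteps} treats this shape. The paper instead proves (Lemma~\ref{periodic-versus-degree}, packaged as Corollary~\ref{cor:direction}) that a completely periodic direction with two cylinders, or with three cylinders two of which are exchanged by the hyperelliptic involution, forces $[K:\Q]\le 2$. Under the standing hypothesis $[K:\Q]=3$, configurations (a), (b), (h)--(k) are therefore \emph{impossible}, and the case-check collapses to exactly (c)--(g). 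You gesture at this mechanism for the two-cylinder cases but should invoke Lemma~\ref{periodic-versus-degree} explicitly rather than ``a similar analysis'', and you should replace the proposed Ratner jump for (a) and (b) by the same exclusion.
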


We first study the properties of the moduli of cylinders for a non
parabolic completely periodic direction. We then prove the theorem at
the end of this section.

\subsection{Commensurability}

Let $(X, \omega)$ be surface given by Thurston's
construction. We recall that the
affine group $\SL(X,\omega)$ of $(X,\omega)$ contains a pseudo-Anosov 
diffeomorphism $\varphi$ with trace field $K$ and a parabolic element
$\rho$. Thanks to~\cite{HuLa} the field $K$ of $(X,\omega)$ is totally
real. Let us assume that $[K:\Q] = 3$.
Let $\sigma_i: K \to \R$ be the different real embeddings. We
fix one of them once and for all and for given $c\in K$
we shortly write $c':=\sigma_2(c)$ and $c'':=\sigma_3(c)$.
Choose $\tau_1, \tau_2 \in \Gal(\R/\Q)$ such that
$$\tau_1 \circ \sigma_1 = \sigma_2 \quad \text{and}
\quad \tau_2 \circ \sigma_1 = \sigma_3$$
and such that $\tau_1$ have order $3$ when restricted
to a Galois closure of $K/\Q$.

Suppose that the horizontal direction of $(X,\omega)$ is completely
periodic with cylinders of height $h_i$, circumference $c_i$ and moduli
$m_i := h_i/c_i$.

\begin{Lemma} 
\label{commensur}
Suppose the horizontal direction has $s=3$ or $s=4$ cylinders, such
that two of them, say the third and the fourth, are interchanged by
the hyperelliptic involution. Then either
\begin{itemize}
\item[i)] the direction is parabolic, i.e.\ $m_i/m_j \in \Q$ 
for all $(i,j)$, or
\item[ii)] the moduli are $\Q$-linearly independent, or 
\item[iii)] the moduli are related by 
$$
\sum_{i=1}^3 a_i m_i = 0, \quad \text{where} \quad a_i \in \Q^\ast,
$$
i.e.\ the moduli are not $\Q$-linearly independent but $m_i/m_j 
\not \in \Q$ for $i \neq j$.
In the terminology introduced in the previous section, the
direction is either parabolic or pairwise incommensurable.
\end{itemize}
\end{Lemma}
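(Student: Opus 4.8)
The plan is to use the pseudo-Anosov diffeomorphism together with the Thurston construction to set up a Galois-theoretic constraint on the moduli $m_i$. Recall that in the Thurston construction the heights $h_i$ form the Perron-Frobenius eigenvector of the matrix built from the two multicurves, so the vector $(h_i)$ is defined over the trace field $K$. Moreover the circumferences $c_i$ of the cylinders in a horizontal completely periodic direction are, up to a common scalar, also algebraic over $K$ --- indeed since the direction is a direction of saddle connections it lies in $K$ (by \cite{KS}, Theorem~28, as recalled in the background), so the horizontal holonomies $c_i$ all lie in $K$ after normalizing. Hence each modulus $m_i = h_i/c_i$ lies in $K$, and the ratios $m_i/m_j$ lie in $K$ as well.

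First I would record the key dichotomy: a $\Q$-linear relation among the $m_i$ with $3$ (or effectively $3$, using that the two exchanged cylinders have equal modulus) variables is either trivial, has all coefficients nonzero, or has exactly one coefficient zero --- and the last case says $m_i/m_j \in \Q$ for some pair. So the content of the lemma is really: \emph{if some $m_i/m_j \in \Q$ then all of them are.} Suppose $m_1/m_2 \in \Q$. The pseudo-Anosov element $\varphi$ acts on the surface and hence permutes completely periodic directions; more usefully, its derivative generates (together with the Galois action) the constraint. The standard mechanism (as in McMullen's genus-two work and in \cite{HLM}): if the horizontal direction were parabolic, the surface would be a Veech surface for the obvious reason, contradicting the hypothesis --- but that is Theorem~\ref{closure}'s hypothesis, not available here since Lemma~\ref{commensur} is a tool toward it. Instead the argument must be internal: apply a suitable power of $\varphi$ or use the affine symmetry to move the horizontal direction to another completely periodic direction whose moduli are the Galois conjugates (under $\tau_1$, $\tau_2$) of the original ones.

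The core step is this. Let $J = J(X,\omega) \in \R^2 \wedge_\Q \R^2$ be the Kenyon--Smillie invariant. For a completely periodic direction, $J$ decomposes as a sum over cylinders, and its $J_{xx}$-component --- the SAF-invariant in that direction --- is a sum $\sum_i c_i \wedge c_i$-type terms which vanishes trivially, but the full $J$ records $\sum_i (\text{width}_i) \wedge (\text{height vector}_i)$, giving a relation $\sum_i m_i\, (c_i \wedge c_i^{\perp})$ after normalization --- more precisely one gets that the vector of moduli, paired against the circumference data, is Galois-equivariantly constrained because $J$ is defined over $\Q$. Concretely: since $J(X,\omega)$ is $\Q$-rational, applying $\tau_1$ and $\tau_2$ to the entries of the holonomy vectors must produce the $J$-invariant of surfaces in the same commensurability data, forcing the conjugate moduli $m_i'$, $m_i''$ to satisfy the same linear relations over $\Q$ as the $m_i$. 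If $m_1/m_2 = r \in \Q$, then also $m_1'/m_2' = r$ and $m_1''/m_2'' = r$. Now feed this into the closed-up constraint: the $h_i$ being a Perron--Frobenius eigenvector means $(m_i)$, $(m_i')$, $(m_i'')$ span a space whose dimension is governed by $[K:\Q]=3$; having a rational ratio in each conjugate simultaneously, combined with the pseudo-Anosov (so $K$ genuinely cubic, all $m_i \neq 0$) forces either all ratios rational (case parabolic) or full $\Q$-independence. The trichotomy follows by splitting on which of these holds and renaming.

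The main obstacle I expect is making precise the claim that ``rationality of a ratio of moduli is Galois-stable'' and that the degree-$3$ condition forbids the intermediate possibility of exactly one rational ratio: this requires carefully tracking that the $m_i$ generate $K$ over $\Q$ (using that $\varphi$ is genuinely pseudo-Anosov with cubic trace field, so the moduli cannot all lie in a proper subfield --- otherwise a power of $\varphi$ would be reducible or the direction would be parabolic, contradicting the setup), and then a short linear-algebra argument: three elements of a cubic field, any two with irrational ratio except possibly one pair, cannot have exactly one pair with rational ratio without collapsing into the full-independence or all-parabolic case. I would isolate this as the technical heart and phrase it as: the $\Q$-span of $\{m_1,m_2,m_3\}$ inside $K$ is Galois-invariant in the relevant sense, so its ``shape of rational relations'' is too, and for a cubic $K$ the only Galois-stable shapes are the three listed. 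The rest --- identifying $m_i \in K$, invoking \cite{KS} and the Thurston eigenvector --- is bookkeeping I would not grind through.
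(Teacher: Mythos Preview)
Your proposal has a genuine gap. The claim you isolate as the ``technical heart'' --- that three elements of a cubic field cannot have exactly one pair with rational ratio --- is simply false: take $m_1 = 1$, $m_2 = 2$, $m_3 = \alpha$ for any cubic irrational $\alpha$. So an additional constraint linking all three moduli is required, and your sketch never produces one. The observation that $m_1/m_2 \in \Q$ implies $m_1'/m_2' \in \Q$ is trivial (apply $\tau$ to $m_1 = r m_2$) and says nothing about $m_3$; invoking the $J$-invariant in the way you do does not yield a usable equation either, since the SAF-invariant in a completely periodic direction vanishes automatically and records no information.

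The paper supplies exactly the missing constraint. After normalizing periods into $K(i)$, it uses $\psi = \varphi^* + (\varphi^*)^{-1}$ on $H^1(X,\R)$ to split cohomology into three Galois-conjugate two-dimensional eigenspaces $S, S', S''$ with $S = \langle \operatorname{Re}\omega, \operatorname{Im}\omega\rangle$. Because $\psi$ is symplectic these eigenspaces are cup-product orthogonal, and integrating $\operatorname{Re}\omega \wedge \operatorname{Im}\tau_j(\omega)$ over the cylinder decomposition yields the \emph{flux equations}
\[
\sum_i m_i\, c_i\, c_i' \;=\; 0, \qquad \sum_i m_i\, c_i\, c_i'' \;=\; 0.
\]
These are the genuine nontrivial relations among the $m_i$. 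Applying a further Galois automorphism to one of them and subtracting gives $\sum_i (m_i - \tau_2(m_i))\,\delta_i\, c_i c_i'' = 0$. If, toward a contradiction, $a_1 m_1 + a_2 m_2 = 0$ with $a_1,a_2 \in \Q^\ast$, one normalizes so that $m_3 \in \Q$ (hence drops out of the last equation), and then deduces $m_i = \tau_2(m_i)$ for $i=1,2$, i.e.\ all $m_i \in \Q$, the parabolic case. The symplectic orthogonality of the eigenspace decomposition --- not the $J$-invariant or the Thurston eigenvector alone --- is the mechanism you are missing.
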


\begin{proof} 

Applying an upper triangular unipotent to $(X,\omega)$
we may suppose that the relative periods are in $K(i)$.
Let $t:=\tr \varphi$ and let 
$$\psi:= \varphi^*+(\varphi^*)^{-1}
\in {\rm End}(H^1(X,\R)).$$
By hypothesis on the trace field we have a decomposition into
eigenspaces of $\psi$ 
$$
H^1(X,\R) = S \oplus S' \oplus S''
$$
where we number the eigenspaces such that
 $S = \langle {\rm Re}(\omega), {\rm Im}(\omega)\rangle$
and $\tau_1(S) = S'$, $\tau_2(S)=S''$. Since $\psi$ 
is symplectic (see \cite{Mc1} Theorem~7.1 and \cite{Mc2}
Theorem~9.7), the eigenspaces are orthogonal with
respect to the cup product. The surface $(X,\omega)$
is covered by the cylinders and we conclude 

\begin{equation} 
\label{eq:flux1}
\sum_{i=1}^s m_i c_i c_i' = \sum_{i=1}^s h_i c_i' = 
\int_X {\rm Re}(\omega) \wedge {\rm Im} \tau_1(\omega) =
\frac{i}{4\pi}\int_X (\omega + \ol{\omega})  \wedge
(\tau_1(\omega) - \ol{\tau_1(\omega)}) = 0. 
\end{equation}
Replacing $\tau_1$ by $\tau_2$ we similarly have
\begin{equation} 
\label{eq:flux2}
\sum_{i=1}^s m_i c_i c_i'' = 0. 
\end{equation}
Moduli and circumferences of the cylinders exchanged
by the hyperelliptic involution are the same. 
We apply $\tau_2$ to this equation and subtract it from the
first one to obtain 
$$ \sum_{i=1}^3 (m_i - \tau_2(m_i)) \delta_i c_i c_i'' =0,$$
where $\delta_i = 1$, except for the case of $4$ cylinders
where $\delta_3 =2$.
Suppose that the lemma is wrong, i.~e.\ there is a relation
$a_1 m_1 + a_2 m_2 = 0$. 
Applying a matrix in $\SL_2(K)$ to $(X,\omega)$, we may suppose that
$m_3$ is rational without changing the ratios of the $m_i$. 
We deduce from the above equations
$m_i = \tau_2(m_i)$ for $i=1,2$. Hence in fact all $m_i \in \Q$
and we are in case i).
\end{proof}

\subsection{Completely periodic but not parabolic directions} 

The next lemma follows from Thurston's construction.

\begin{Lemma}
\label{lm:bound:cylinders}
Let $(X,\omega)$ be a surface given by Thurston's construction.
If $(X,\omega)$ admits a parabolic direction with a decomposition into $k$ cylinders, then
$[K:\Q] \leq k$. In particular, if $(X,\omega)$ admits a one cylinder
decomposition then it is an arithmetic surface.
\end{Lemma}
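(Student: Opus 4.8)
The plan is to run a flux (cup-product) argument parallel to the proof of Lemma~\ref{commensur}. Applying an element of $\GL^+_2(\R)$, which affects neither $K$ nor the number of cylinders, we may assume the given parabolic direction is horizontal, and, after an upper triangular unipotent, that all relative periods lie in $K(i)$. Let $C_1,\dots,C_k$ be the horizontal cylinders; each has circumference $c_i\in K$ (the period of its core curve, which is horizontal, hence real) and height $h_i\in K$, and the area is $A=\sum_{i=1}^k c_ih_i\in K^\ast$. Here we use that $(X,\omega)$, being a Thurston surface, carries a pseudo-Anosov diffeomorphism, so that its trace field $K$ is totally real (\cite{HuLa}) and coincides with the holonomy field (\cite{KS}). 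Write $d=[K:\Q]$ and let $\sigma_1,\dots,\sigma_d\colon K\hookrightarrow\R$ be the embeddings, with $\sigma_1$ the fixed one; as in Section~\ref{bouilla} let $\tau_a\in\Gal(\R/\Q)$ satisfy $\tau_a\circ\sigma_1=\sigma_a$. Exactly as in Lemma~\ref{commensur}, $\psi=\varphi^\ast+(\varphi^\ast)^{-1}$ is self-adjoint for the cup product on $H^1(X,\R)$, so its eigenspace $S_1=\langle{\rm Re}\,\omega,{\rm Im}\,\omega\rangle$ and its Galois conjugates $S_a=\tau_a(S_1)$, $a=2,\dots,d$, are two-dimensional and pairwise orthogonal, with $[{\rm Re}(\tau_a\omega)]$ and $[{\rm Im}(\tau_a\omega)]$ spanning $S_a$.

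A parabolic direction is completely periodic, so $X$ is, up to a null set, the disjoint union of the closed cylinders $\ol C_i$; integration of closed forms is therefore additive over the $C_i$. Doing the per-cylinder computation exactly as in the proof of Lemma~\ref{commensur} (see~\eqref{eq:flux1}) yields, for all $a,b$,
\[
\int_X {\rm Re}(\tau_a\omega)\wedge{\rm Im}(\tau_b\omega)=\sum_{i=1}^k\sigma_a(h_i)\,\sigma_b(c_i).
\]
For $a\neq b$ this integral vanishes, since $[{\rm Re}(\tau_a\omega)]\in S_a$, $[{\rm Im}(\tau_b\omega)]\in S_b$ and $S_a\perp S_b$; for $a=b$ the right-hand side is $\sum_i\sigma_a(h_ic_i)=\sigma_a(A)\neq 0$. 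Hence
\[
\sum_{i=1}^k\sigma_a(h_i)\,\sigma_b(c_i)=
\begin{cases}
0,& a\neq b,\\
\sigma_a(A)\neq 0,& a=b.
\end{cases}
\]

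To finish, let $V,W$ be the $d\times k$ matrices with entries $V_{ai}=\sigma_a(h_i)$ and $W_{ai}=\sigma_a(c_i)$. The displayed identities say exactly that $VW^{t}=\mathrm{diag}\big(\sigma_1(A),\dots,\sigma_d(A)\big)$, which is invertible; hence $V$ has rank $d$, and since $V$ has only $k$ columns this forces $d\le k$, proving the first assertion. For the ``in particular'': if $k=1$ then $d=1$, i.e.\ $K=\Q$, so after the normalization above all periods lie in $\Q(i)$, i.e.\ $(X,\omega)$ is square-tiled, hence arithmetic. The one non-formal ingredient is the per-cylinder flux identity: making precise the class $\tau_a\omega\in H^1(X,\R)$ via the real multiplication coming from Thurston's construction (\cite{Mc1},\cite{Mc2}), and checking that $\int_{C_i}{\rm Re}(\tau_a\omega)\wedge{\rm Im}(\tau_b\omega)$ depends only on $\sigma_a(h_i)$ and $\sigma_b(c_i)$ — this is precisely where the hypothesis ``given by Thurston's construction'' is used, and it is already carried out in the proof of Lemma~\ref{commensur}. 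Everything else — the normalization, the additivity over cylinders, and the rank bound — is routine bookkeeping.
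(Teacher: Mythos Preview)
Your argument is correct, and in fact it supplies what the paper omits: the paper's own ``proof'' of Lemma~\ref{lm:bound:cylinders} is the single sentence ``The next lemma follows from Thurston's construction,'' with no further details. Your flux/cup-product computation is precisely the method of Lemma~\ref{commensur}, carried out for all pairs $(a,b)$ of embeddings rather than just $(1,b)$, and packaged cleanly via the rank of $VW^t$. Two remarks are worth making.

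First, the most literal reading of ``follows from Thurston's construction'' is the Perron--Frobenius bound: for the two multicurve directions one has $|I|$ and $|J|$ cylinders, and the trace field is $\Q(\mu^2)$ with $\mu^2$ an eigenvalue of an integer matrix of size $\min(|I|,|J|)$, giving $[K:\Q]\le\min(|I|,|J|)$. But that argument only treats those two specific parabolic directions, whereas the lemma is invoked (in Lemma~\ref{periodic-versus-degree}) for an \emph{arbitrary} parabolic direction. Your flux argument covers this general case and is therefore what the application actually requires. It even proves slightly more: the bound $[K:\Q]\le k$ holds for any completely periodic direction with $k$ cylinders, parabolic or not, since you never use commensurability of the moduli.

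Second, a small point of bookkeeping: the per-cylinder identity should be justified cohomologically rather than literally ``per cylinder'', since the restriction of a closed form representing $\tau_a\omega$ to an individual cylinder depends on the representative. The clean way (implicit in~\eqref{eq:flux1}) is to note that ${\rm Im}\,\omega$ is Poincar\'e dual to $\sum_i h_i\alpha_i$ with $\alpha_i$ the core curves; since the $\alpha_i$ are integral classes, applying $\sigma_b$ to periods sends this to $\sum_i\sigma_b(h_i)\alpha_i$, whence $\int_X\eta_a\wedge\xi_b=\sum_i\sigma_b(h_i)\,\sigma_a(c_i)$. This is the transpose of what you wrote, but of course the matrix equation $VW^t=\mathrm{diag}(\sigma_a(A))$ and the rank conclusion are unaffected.
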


This lemma, combined with Lemma~\ref{commensur}, gives together

\begin{Lemma} 
\label{periodic-versus-degree}
Let $(X,\omega)\in \LLL$ be a surface given by Thurston's construction
with a completely periodic direction with two cylinders (or with three cylinders, two of
which interchanged by the hyperelliptic involution). Then
$[K:\Q] \leq 2$.
\end{Lemma}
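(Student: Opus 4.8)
The plan is to rule out $[K:\Q]=3$; since $X$ has genus $3$ and $\SL(X,\omega)$ contains a pseudo-Anosov diffeomorphism we always have $[K:\Q]\le 3$, and if $[K:\Q]\le 2$ there is nothing to prove, so I assume $[K:\Q]=3$ and seek a contradiction. Under this assumption the set-up of Lemma~\ref{commensur} is available: after an upper triangular unipotent the relative periods lie in $K(i)$, so the circumferences $c_i$ and heights $h_i$ (hence the moduli $m_i=h_i/c_i$) of the cylinders in the given completely periodic direction lie in $K$, and $H^1(X,\R)=S\oplus S'\oplus S''$ splits into the mutually orthogonal eigenspaces.

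First I would dispose of the two-cylinder case. A completely periodic direction with exactly two cylinders has either two cylinders exchanged by the hyperelliptic involution $\iota$, or two cylinders each fixed by $\iota$ (a non-fixed cylinder would have a distinct $\iota$-image, forcing a third cylinder). If the two are exchanged they have equal moduli, so the direction is parabolic and Lemma~\ref{lm:bound:cylinders} gives $[K:\Q]\le 2$, a contradiction. If both are fixed, the orthogonality of $S$ against $S'$ and against $S''$ gives, exactly as in \eqref{eq:flux1}--\eqref{eq:flux2} but now with $s=2$, the relations $m_1c_1c_1'+m_2c_2c_2'=0$ and $m_1c_1c_1''+m_2c_2c_2''=0$. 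Dividing them yields $\sigma_2(c_1/c_2)=\sigma_3(c_1/c_2)$; since $[K:\Q]=3$ is prime, an element of $K$ whose $\sigma_2$- and $\sigma_3$-images coincide is rational, so $c_1/c_2\in\Q$, and then the first relation forces $m_1/m_2\in\Q$. Again the direction is parabolic and Lemma~\ref{lm:bound:cylinders} yields the contradiction $[K:\Q]\le 2$.

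Next I would treat the three-cylinder case, with $C_1$ fixed by $\iota$ and $C_2,C_3$ exchanged by $\iota$, so that (since $\iota$ is an isometry) $c_2=c_3$, $h_2=h_3$ and $m_2=m_3$. Lemma~\ref{commensur}, applied with three cylinders two of which are exchanged, says the direction is parabolic, or the $m_i$ are $\Q$-linearly independent, or they satisfy a single relation with all coefficients nonzero; the last two alternatives are incompatible with $m_2=m_3$, so the direction is parabolic. The one remaining point is that Lemma~\ref{lm:bound:cylinders} as stated only bounds $[K:\Q]$ by $3$ here: one needs the sharpening that a parabolic direction whose cylinders realise only two distinct circumferences forces $[K:\Q]\le 2$. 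The cleanest way to see this is to pass to the hyperelliptic quotient $(\P^1,q)\in\QQQ(1,1,1,1)$, which has the same Veech group and trace field and arises from the (half-translation) Thurston construction; the parabolic direction descends to a parabolic direction with only two cylinders (the image of $C_1$ and the common image of $C_2=C_3$), so the analogue of Lemma~\ref{lm:bound:cylinders} gives $[K:\Q]\le 2$. Either way $[K:\Q]=3$ is contradicted, completing the proof.

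The step I expect to be the main obstacle is precisely this last sharpening: the literal statement of Lemma~\ref{lm:bound:cylinders} is not quite strong enough in the parabolic three-cylinder subcase, so one must either re-read its proof to see that coincidences among the cylinder circumferences lower the bound—and feed in the equality $c_2=c_3$ dictated by the hyperelliptic symmetry—or make precise the passage to the quotient half-translation surface and the corresponding Thurston bound there. Everything else reduces to the elementary linear algebra over the cubic field $K$ carried out above, together with the flux identities already established in the proof of Lemma~\ref{commensur}.
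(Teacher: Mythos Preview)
Your argument is correct and follows the same overall strategy as the paper: assume $[K:\Q]=3$, use the flux identities \eqref{eq:flux1}--\eqref{eq:flux2} to force the direction to be parabolic, then appeal to Lemma~\ref{lm:bound:cylinders}. The paper's execution is somewhat more streamlined. It does not split the two-cylinder case into exchanged/fixed subcases; instead it reads off directly from the two flux equations that
\[
\frac{m_1}{m_2}=-\frac{c_2c_2'}{c_1c_1'}=-\frac{c_2c_2''}{c_1c_1''},
\]
and observes that applying $\tau_1$ to the second expression yields the first, so $\tau_1(m_1/m_2)=m_1/m_2$ and hence $m_1/m_2\in\Q$. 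For three cylinders with two exchanged, the paper simply says the case is ``similar since the exchanged cylinders have the same heights and widths'': since $c_2=c_3$ and $m_2=m_3$, the flux relations collapse to two-term equations and the identical argument gives $m_1/m_2\in\Q$. Your detour through Lemma~\ref{commensur} here is valid but unnecessary.

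Your worry about Lemma~\ref{lm:bound:cylinders} in the three-cylinder subcase is the one place where you are more careful than the paper, which is indeed terse. The paper's implicit point is that the proof of Lemma~\ref{lm:bound:cylinders} (via Thurston's construction) bounds $[K:\Q]$ not by the raw cylinder count but by the number of $\iota$-orbits of cylinders: with $h_2=h_3$ and $c_2=c_3$, the Perron--Frobenius eigenvalue already satisfies a degree~$2$ equation on the $\iota$-symmetric part of the intersection matrix, so one gets $[K:\Q]\le 2$ directly. Your proposed fix via the hyperelliptic quotient $\QQQ(1,1,1,1)$ is another correct way to see the same reduction, and arguably the cleanest.
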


\begin{proof}
We discuss the case of two cylinders, the other case is
similar since the exchanged cylinders have the same heights and widths.
The equations~\eqref{eq:flux1} and \eqref{eq:flux2} 
with $s=2$ hold in this situation, too.
They yield
$$ \frac{m_1}{m_2} = -\frac{c_2c_2''}{c_1c_1''}
= -\frac{c_2c_2'}{c_1c_1'}.$$
In particular $\tau_1(m_1/m_2)=m_1/m_2$. Therefore $m_1/m_2 \in \Q$
and the horizontal direction is a parabolic direction with $2$
cylinders. Applying Lemma~\ref{lm:bound:cylinders} we get $[K:\Q] \leq
2$, which ends the proof.
\end{proof}

In particular, we have:

\begin{Corollary}
\label{cor:direction}
Let $(X,\omega)\in \LLL$ be a surface given by Thurston's construction with
$[K:\Q]=3$. The configuration of every completely periodic direction
belongs to
Figure~\ref{cap:conf:list}c-\ref{cap:conf:list}d-\ref{cap:conf:list}e-\ref{cap:conf:list}f-\ref{cap:conf:list}g.
\end{Corollary}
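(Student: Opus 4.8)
The plan is to read the statement off from the complete list of periodic configurations in Theorem~\ref{cylclassH} together with the degree bound of Lemma~\ref{periodic-versus-degree}. By Theorem~\ref{cylclassH}, any completely periodic direction on a surface in $\LLL$ realizes exactly one of the eleven configurations of Figure~\ref{cap:conf:list}a--k. Under the hypothesis $[K:\Q]=3$ it therefore suffices to exclude the six configurations a, b, h, i, j, k; the remaining five are precisely c, d, e, f, g.

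First I would dispose of the two-cylinder configurations h, i, j, k. In each of these the vertical direction decomposes $(X,\omega)$ into exactly two cylinders, so Lemma~\ref{periodic-versus-degree} applies verbatim and gives $[K:\Q]\le 2$, contradicting the hypothesis.

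Next I would treat the three-cylinder configurations a and b. By the statement of Theorem~\ref{cylclassH}, these are exactly the two configurations with three cylinders, precisely one of which is fixed by the hyperelliptic involution; hence the other two cylinders are exchanged by the involution and in particular share the same height and circumference. This is the parenthetical case of Lemma~\ref{periodic-versus-degree} (``three cylinders, two of which interchanged by the hyperelliptic involution''), so again $[K:\Q]\le 2$, a contradiction. The configurations surviving both exclusions are c, d, e, f, g, which is the assertion of the corollary.

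There is essentially no obstacle beyond bookkeeping here: all the substantive input — the flux relations~\eqref{eq:flux1}--\eqref{eq:flux2} and Thurston's cylinder-count bound Lemma~\ref{lm:bound:cylinders} — is already packaged into Lemma~\ref{periodic-versus-degree}, and Theorem~\ref{cylclassH} supplies the finite list. The one point deserving a moment's care is the matching of Figure~\ref{cap:conf:list} against the hypotheses of Lemma~\ref{periodic-versus-degree}: one must check that a and b genuinely contain a pair of cylinders swapped by the involution, whereas c and d have all three cylinders fixed (so the lemma does not apply) and e, f, g have four cylinders (so the two-or-three-cylinder hypothesis does not apply). Note that the corollary asserts only that every completely periodic configuration lies among c--g, not that each of these configurations actually occurs for such an $(X,\omega)$.
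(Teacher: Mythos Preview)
Your proof is correct and matches the paper's approach: the corollary is stated immediately after Lemma~\ref{periodic-versus-degree} with ``In particular, we have'', and your argument simply spells out the case-check against the list of Theorem~\ref{cylclassH} that this phrase leaves implicit.
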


\subsection{Proof of Theorem~\ref{bouillaorbitclosure}}
By Lemma~\ref{periodic-versus-degree} the completely periodic direction has at 
least $3$ orbits of cylinders under the hyperelliptic involution. 
By Theorem~\ref{cylclassH} and Corollary~\ref{cor:direction}, it is
thus one of the directions in Figure~\ref{cap:conf:list}c
or~\ref{cap:conf:list}d, Figure~\ref{cap:conf:list}e
or~\ref{cap:conf:list}f,
or Figure~\ref{cap:conf:list}g. 
The Lemmas~\ref{cpcase3fix3}, \ref{3Ccpisok},
\ref{2T2Ccpisok}, \ref{cpcase2T2Cfix}
and \ref{cpcase4fix1}, respectively, show that,
if the direction is not parabolic, the orbit closure
is as big as claimed. The hypothesis on pairwise incommensurability
is met because of Lemma~\ref{commensur}.

\section{Examples}
\label{sec:examples}

In this section we show that Theorem~\ref{bouillaorbitclosure} applies 
to infinitely many surfaces. 

\begin{Theorem}
There exist infinitely many surfaces given by Thurston's
construction, with trace field of degree $3$, and with 
a non parabolic periodic direction.
\end{Theorem}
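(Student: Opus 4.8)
The plan is to exhibit an explicit infinite family of multicurve pairs feeding into Thurston's construction and to verify, for each member, the two required properties: that the trace field has degree $3$ over $\Q$, and that some completely periodic direction is not parabolic. First I would choose, for each integer $n$ in a suitable range, a pair of multicurves $(\{\gamma_i\}_{i\in I},\{\gamma_j\}_{j\in J})$ on a genus-three surface whose union fills, with intersection matrix $M$ depending on $n$, so that the resulting translation surface lies in $\LLL$; the cleanest way to guarantee membership in $\LLL\cong\QQQ(1,1,1,1)$ is to pick the combinatorics so that the one-form has exactly two double zeros and the surface is hyperelliptic with the involution swapping the appropriate pieces (equivalently, take the combinatorial data to be invariant under a prescribed involution). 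I would then compute the Perron--Frobenius eigenvalue $\mu=\mu(n)$ of the weighted intersection matrix and show that $\Q(\mu)$ — which equals (half of) the trace field, since a product of the two parabolic Dehn-twist elements has trace a quadratic expression in $\mu$ — is a cubic field; concretely one arranges $M$ so that its characteristic polynomial has an irreducible cubic factor with the Perron root, e.g.\ by making $M$ a small matrix whose characteristic polynomial is visibly $x^3 - (\text{stuff})x^2 - \cdots$ irreducible over $\Q$ for infinitely many $n$ (irreducibility for all but finitely many $n$ follows from a rational-root / reduction-mod-$p$ argument, or by noting the discriminant is a nonzero polynomial in $n$).

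Next I would produce the non-parabolic completely periodic direction. The horizontal and vertical directions of the Thurston surface are parabolic by construction, so these do not help; instead I would exhibit a third completely periodic direction — for instance the direction of slope determined by one of the cylinder decompositions appearing in Figure~\ref{cap:conf:list} that is compatible with the chosen combinatorics — and compute the moduli $m_i$ of its cylinders in terms of $\mu$. The point is to check that these moduli are \emph{not} all pairwise commensurable, i.e.\ some ratio $m_i/m_j$ is irrational. By Corollary~\ref{cor:direction} the configuration is forced to be one of Figure~\ref{cap:conf:list}c--g, and by Lemma~\ref{commensur} the only alternative to non-parabolicity is that all ratios lie in $\Q$; so it suffices to show a single ratio lies in $K\setminus\Q$, which is a finite explicit computation once the cylinder widths and heights are written as elements of $\Q(\mu)$. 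Alternatively, and perhaps more slickly, I would invoke the flux/SAF relations~\eqref{eq:flux1}--\eqref{eq:flux2}: if the direction were parabolic then $m_1/m_2$ would be fixed by the Galois action as in the proof of Lemma~\ref{periodic-versus-degree}, forcing a degree drop, contradicting $[K:\Q]=3$; so it may actually be enough to produce one completely periodic direction with $3$ cylinder-orbits and then \emph{this already implies} non-parabolicity automatically. That would reduce the whole theorem to: build infinitely many Thurston surfaces in $\LLL$ with cubic trace field and at least one completely periodic direction having three orbits of cylinders under the hyperelliptic involution — and the latter is guaranteed by Theorem~\ref{completely-periodic} together with Lemma~\ref{periodic-versus-degree}.

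So the streamlined plan is: (1) write down an explicit one-parameter family of filling multicurve pairs invariant under the hyperelliptic involution, giving surfaces in $\LLL$; (2) show the trace field is cubic for infinitely many parameter values, via irreducibility of the relevant characteristic polynomial; (3) note that $[K:\Q]=3$ together with Lemma~\ref{periodic-versus-degree} forces any completely periodic direction — of which there are infinitely many by Theorem~\ref{completely-periodic} — to have at least three cylinder-orbits, and hence by Lemma~\ref{commensur} to be parabolic or pairwise incommensurable; (4) rule out ``parabolic'' for at least one such direction by an explicit modulus computation, or more cheaply observe that the degree-$3$ hypothesis already precludes the parabolic case for a suitable direction. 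I expect the main obstacle to be step (1)--(2) done simultaneously: one must choose the combinatorics concretely enough that the intersection matrix, the zero structure (two double zeros), and the hyperelliptic symmetry are all under control, \emph{and} so that the characteristic polynomial is provably irreducible of degree $3$ for an infinite set of $n$ — balancing ``explicit enough to compute'' against ``general enough to be an infinite family'' is the delicate part, whereas once the surfaces are in hand the verification of the hypotheses of Theorem~\ref{bouillaorbitclosure} is essentially bookkeeping with the already-established Lemmas~\ref{commensur} and~\ref{periodic-versus-degree}.
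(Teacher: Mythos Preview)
Your overall strategy matches the paper's: construct an explicit one-parameter family of Thurston surfaces in $\LLL$, prove the trace field is cubic via irreducibility of the characteristic polynomial of the intersection matrix, then exhibit a third completely periodic direction and compute a modulus ratio to show it is irrational. The paper carries this out with a concrete $3\times 3$ matrix depending on $n$, whose characteristic polynomial $P_n(X)=X^3-2(n^2+3)X^2+(7n^2+4)X-4n^2$ is shown irreducible for $n>2$ by locating the three real roots between consecutive integers; the non-parabolic direction is then an explicit $2T_\fix 2C$-direction of slope $n/\alpha_n$, for which a single modulus ratio is computed in closed form and shown to lie in $K\setminus\Q$.

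Your proposed ``cheap'' shortcut, however, does not work and you should drop it. You suggest that once $[K:\Q]=3$ and a completely periodic direction has three cylinder-orbits, non-parabolicity might follow automatically from the Galois argument of Lemma~\ref{periodic-versus-degree}. It does not: that lemma only forces a degree drop when there are \emph{at most two} cylinder-orbits. With three orbits, Lemma~\ref{lm:bound:cylinders} gives only $[K:\Q]\le 3$, which is no obstruction. Indeed the horizontal and vertical directions of your own Thurston surface are parabolic with three (or more) cylinder-orbits and cubic trace field --- so three-orbit parabolic directions certainly coexist with $[K:\Q]=3$. More to the point, nothing in your general setup rules out that the surface is Veech, in which case \emph{every} completely periodic direction is parabolic; the theorem asserts precisely that this does not happen, so the explicit modulus computation in some specific transverse direction is not optional bookkeeping but the actual content of the proof.
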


We first construct such a surfaces (Lemma~\ref{lm:bouillabaisse}), and
we then prove that they possess a completely periodic {\em $2T_\fix
  2C$-direction}, which is not parabolic
(Proposition~\ref{prop:periodic:nonparabolic}).

\begin{Proposition}

Let $n$ be any positive integer and let $P=P_n$ be the polynomial 
$X^3-2(n^2+3)X^2+(7n^2+4)X-4n^2$. Then $P$ possesses only real roots.
If $\alpha=\alpha_n$ is the largest root, then $\alpha >1$. \medskip

\noindent Let us define the vectors 
$V=(V_1,V_2,V_3)$ and $H=(H_1,H_2,H_3)$ by
$$
\left\{ \begin{array}{l}
V_1 = \alpha-1 \\
V_2 = \frac1{n^2}(\alpha^2-\alpha(6+n^2)+4+n^2) \\
V_3 = 1
\end{array} \right. \textrm{ and }
\left\{ \begin{array}{l}
H_1 = 2V_1 \\
H_2 = n(V_1+V_2)\\
H_3 = V_1+V_3
\end{array} \right.
$$
then $V$ and $H$ are positive, namely $V_i >0$ and 
$H_i >0$ for $i=1,2,3$.
\end{Proposition}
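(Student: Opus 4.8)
The plan is to treat the cubic $P=P_n$ by elementary real analysis: locate its roots with the intermediate value theorem, and then read off the positivity of the $V_i$ and $H_i$ from a polynomial identity. For the first two assertions I would evaluate $P$ at a few well-chosen points. For every integer $n\ge 1$ one has
\begin{align*}
P(0) &= -4n^2 < 0, & P(4/5) &= \tfrac{8(5n^2-2)}{125} > 0,\\
P(3) &= -(n^2+15) < 0, & P(6) &= 24-34n^2 < 0.
\end{align*}
Since $P$ is monic of degree $3$, $P(x)\to+\infty$ as $x\to+\infty$, so $P$ has a root in each of the pairwise disjoint intervals $(0,4/5)$, $(4/5,3)$ and $(6,+\infty)$. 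Hence $P$ has three distinct real roots and no others, which is the first claim; moreover the largest root satisfies $\alpha=\alpha_n>6$, in particular $\alpha>1$, which is the second claim.

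For the positivity, $V_3=1>0$ and $V_1=\alpha-1>5>0$ are immediate. For $V_2$ I would use the identity
$$
P(X)=X\bigl(X^2-(n^2+6)X+(n^2+4)\bigr)-n^2\bigl(X^2-6X+4\bigr),
$$
which is checked by expanding the right-hand side. By definition, $n^2V_2$ is the value at $X=\alpha$ of the quadratic $X^2-(n^2+6)X+(n^2+4)$. Substituting $X=\alpha$ into the identity and using $P(\alpha)=0$ yields $\alpha\cdot n^2V_2=n^2(\alpha^2-6\alpha+4)$, hence
$$
V_2=\frac{\alpha^2-6\alpha+4}{\alpha}=\alpha-6+\frac{4}{\alpha}>0,
$$
since $\alpha>6$. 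It then follows at once that $H_1=2V_1>0$, $H_3=V_1+V_3=\alpha>0$ and $H_2=n(V_1+V_2)>0$, using $n,V_1,V_2>0$.

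I do not expect a genuine obstacle; the statement is elementary. The one point requiring a little care is uniformity in $n$: the natural test value $X=1$ gives $P(1)=n^2-1$, which vanishes at $n=1$, so one must choose a point inside the positive hump of $P$ that works for all $n\ge 1$, and $X=4/5$ serves this purpose. The identity used for $V_2$ is what makes its positivity drop out with no case distinction.
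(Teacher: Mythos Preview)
Your proof is correct and entirely elementary, but it proceeds quite differently from the paper. The paper observes that $P$ is the characteristic polynomial of the symmetric, nonnegative, irreducible matrix
\[
A=\begin{pmatrix} 5+n^2 & n^2 & 1 \\ n^2 & n^2 & 0 \\ 1 & 0 & 1 \end{pmatrix},
\]
so real-rootedness is immediate from symmetry, and positivity of $V$ follows from Perron--Frobenius once one checks that $V$ is an eigenvector for the top eigenvalue $\alpha$. Your route instead locates the three roots by the intermediate value theorem (with the nice choice $X=4/5$ to handle $n=1$ uniformly), obtains the sharper bound $\alpha>6$, and then extracts $V_2=(\alpha^2-6\alpha+4)/\alpha>0$ from a clean polynomial identity; the positivity of the $H_i$ then follows. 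This is more self-contained and avoids any appeal to Perron--Frobenius. One thing worth noting: the paper's approach, in verifying that $V$ is an $A$-eigenvector, records the linear relations $(L_1),(L_2),(L_3)$ among the $V_i$, and these relations are used repeatedly in the subsequent computations (moduli of cylinders, closedness of $\gamma_4$, etc.). Your argument proves the proposition as stated but does not supply those relations, so if you were writing the full section you would still need to derive them separately.
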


\begin{proof}
If $A$ is the symmetric matrix defined by 
$A=\left( \begin{smallmatrix} 5+n^2 & n^2 & 1 \\
n^2 & n^2 & 0 \\ 1 & 0 & 1 \end{smallmatrix} \right)$, then 
the characteristic polynomial $\chi_A$ is $P$. This ensures that $P$ has
only reals roots. In addition 
$A$ is an irreducible matrix, indeed $(A^2)_{ij}>0$ for all $i,j$. 
This proves that $\alpha$ is the Perron--Frobenius eigenvalue of the entire matrix $A$ and
therefore $\alpha >1$. Let us show that $V$ is an eigenvector for 
the eigenvalue value $\alpha$ of the matrix $A$. The Perron--Frobenius
theorem will then show the positivity of $V$.

For that we have to show that $V$ is a solution of the linear system
\begin{equation}
\label{eq:linear}
\left\{
\begin{array}{lllll}
(5+n^2)V_1 & + n^2V_2 & +V_3 & = \alpha V_1 & (L_1)\\
n^2 V_1 & + n^2 V_2 & & = \alpha V_2 & (L_2)\\
V_1 & & + V_3 & = \alpha V_3 & (L_3)
\end{array} \right.
\end{equation}
This is a simple verification, according to the fact that 
$\alpha^3 = 2(n^2+3)\alpha^2-(7n^2+4)\alpha+4n^2$. The positivity 
of $H$ is then clear.
\end{proof}

Thanks to the previous statement, for any $n\geq 1$, 
let $(X_n,\omega_n)$ be the surface presented in 
Figure~\ref{cap:surface-thurston}. 

\begin{figure}[htbp]
\label{fig:surface-thurston}
\begin{center}
\psfrag{v1}{$\scriptstyle V_1$} \psfrag{v2}{$\scriptstyle V_2$}
\psfrag{v3}{$\scriptstyle V_3$} \psfrag{h1}{$\scriptstyle H_1$}
\psfrag{h2}{$\scriptstyle H_2$} \psfrag{h3}{$\scriptstyle H_3$}
\psfrag{nh2}{$\scriptstyle nH_2$}

 \includegraphics[width=8cm]{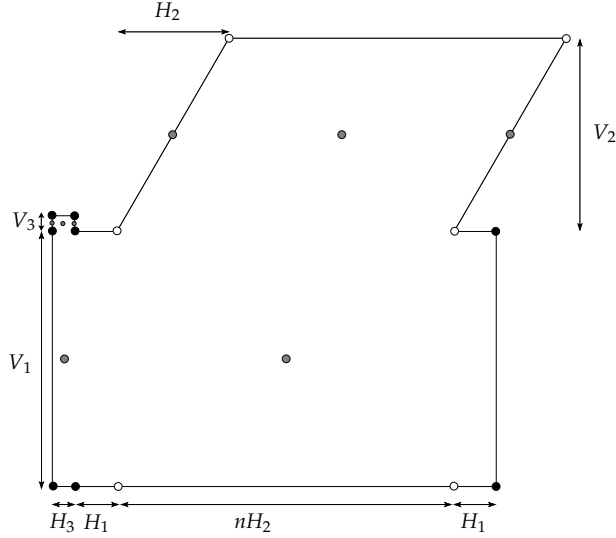}
\end{center}
\caption{
A surface $(X_n,\omega_n)$ in the hyperelliptic locus $\mathcal L$.
The vertical direction has been stretched. 
}\label{cap:surface-thurston}
\end{figure}

\begin{Lemma}
\label{lm:bouillabaisse}
The surface $(X_n,\omega_n)$ is a bouillabaisse surface. Moreover its
trace field is $\Q(\alpha_n)$ and its trace field has degree $3$ over $\Q$ if
$n > 2$.
\end{Lemma}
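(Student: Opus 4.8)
The plan is to establish the three assertions separately: that $(X_n,\omega_n)$ is a bouillabaisse surface, that its trace field is $\Q(\alpha_n)$, and that this field has degree $3$ over $\Q$ when $n>2$.

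\textbf{Bouillabaisse surface.} First I would recognize the surface of Figure~\ref{cap:surface-thurston} as an instance of Thurston's construction. Reading off the figure, the horizontal saddle connections (resp.\ the vertical ones) cut $(X_n,\omega_n)$ into cylinders whose core curves form two multicurves $\{\gamma_i\}_{i\in I}$ and $\{\gamma_j\}_{j\in J}$ that together fill $X_n$; the heights and widths of the defining rectangles are the entries of the vectors $H$ and $V$, and the symmetric matrix $A$ of the preceding Proposition is precisely the weighted intersection matrix of the two multicurves (the integer weights being those read off from the figure, the $n$ being visible through the label $nH_2$). Under this dictionary the linear system \eqref{eq:linear}, verified in the Proposition, is exactly Thurston's eigenvector equation $\mu h_r=\sum_s m_r M_{rs}h_s$ with $\mu=\alpha_n$. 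Consequently suitable products of Dehn twists along the $\gamma_i$ and along the $\gamma_j$ yield two transverse parabolic elements of $\SL(X_n,\omega_n)$ whose products give a pseudo-Anosov; this is what it means for $(X_n,\omega_n)$ to arise from Thurston's construction. The only delicate point — and I expect it to be the main obstacle — is the bookkeeping that matches the cut-and-paste combinatorics of the figure with the abstract data $(M,\{m_r\})$; once that is fixed, \eqref{eq:linear} does the rest.

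\textbf{Trace field.} Since $(X_n,\omega_n)$ carries a pseudo-Anosov by the previous step, Kenyon--Smillie (\cite{KS}, Theorem~28) identifies the trace field $K$ with the holonomy field $k$. Every edge vector of the polygonal presentation of Figure~\ref{cap:surface-thurston} has horizontal component a nonnegative integer combination of $V_1,V_2,V_3$ and vertical component one of the $H_i$; as all $V_i$ and $H_i$ lie in $\Q(\alpha_n)$ by their explicit formulas, every period of $(X_n,\omega_n)$ lies in $\Q(\alpha_n)\,(1,0)+\Q(\alpha_n)\,(0,1)$, whence $k\subseteq\Q(\alpha_n)$. Conversely $(X_n,\omega_n)$ has horizontal saddle connections of lengths $V_3=1$ and $V_1=\alpha_n-1$, so the ratio $\alpha_n-1$ lies in $k$ and therefore $\alpha_n\in k$. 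Hence $K=k=\Q(\alpha_n)$.

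\textbf{Degree three for $n>2$.} Here I would show $P_n$ is irreducible over $\Q$; being of degree $3$ it is enough that $P_n$ have no rational root, and by the rational root theorem together with Gauss's lemma such a root would be a positive integer dividing $4n^2$ — positivity because $A$ is symmetric with positive leading principal minors $n^2+5,\ 5n^2,\ 4n^2$, hence positive definite, so all roots are real and positive. Now evaluate: $P_n(0)=-4n^2<0$, $P_n(1)=n^2-1>0$, $P_n(2)=2(n^2-4)>0$, $P_n(3)=-(n^2+15)<0$, and $P_n(2n^2)=n^2(4-10n^2)<0$; combined with $\alpha_n<\tr(A)=2n^2+6$ this localizes, for $n\ge3$, the three roots in the intervals $(0,1)$, $(2,3)$ and $(2n^2,2n^2+6)$ respectively. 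The first two contain no integer. An integer in the third dividing $4n^2$ would have complementary divisor $<2$, hence would equal $4n^2$; but $4n^2\ge 2n^2+6$ for $n\ge2$, so $4n^2\notin(2n^2,2n^2+6)$, a contradiction. Thus $P_n$ has no rational root and $[\Q(\alpha_n):\Q]=3$. (For $n=2$ one has $P_2(2)=0$, which is why the hypothesis reads $n>2$.)
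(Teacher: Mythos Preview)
Your bouillabaisse argument has a genuine gap. You assert that the $3\times 3$ matrix $A$ is ``precisely the weighted intersection matrix'' of Thurston's construction, but in the paper's setup the intersection matrix $M_{rs}$ is indexed by $I\cup J$; here the horizontal and vertical decompositions have $3$ and $4$ cylinders respectively, so the Thurston matrix is $7\times 7$ (with a $3\times 4$ off-diagonal block), not $3\times 3$. The matrix $A$ is an auxiliary device used in the preceding Proposition to manufacture positive $V_i$ via Perron--Frobenius, not the Thurston data itself. You yourself flag the matching of the figure with the abstract data as ``the main obstacle'' and then do not carry it out; that is exactly where the argument is missing. The paper avoids this entirely by the direct route: using $(L_1)$--$(L_3)$ it checks that the three horizontal cylinders all have modulus $\alpha$, so $T_\alpha=\left(\begin{smallmatrix}1&\alpha\\0&1\end{smallmatrix}\right)$ is affine; and that the vertical cylinders have moduli $1/2,\,1,\,1$, so $U_{1/2}=\left(\begin{smallmatrix}1&0\\1/2&1\end{smallmatrix}\right)$ is affine. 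Two transverse parabolics is all that ``bouillabaisse'' means.

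Your trace-field and degree-three arguments are correct but follow different routes from the paper. For the trace field the paper simply reads off $\tr(T_\alpha U_{1/2})=\tfrac12(4+\alpha)$ and invokes Kenyon--Smillie once, whereas you compute the holonomy field by a two-sided inclusion; both are fine, though note that in the paper's conventions $V_i$ are \emph{vertical} lengths, so your ``horizontal saddle connections of lengths $V_3,V_1$'' should read vertical (this does not affect the argument). For irreducibility the paper evaluates $P_n$ at $2n^2+2$ and $2n^2+3$ to trap the largest root in a unit interval, while you use positive-definiteness of $A$, the trace bound $\alpha<2n^2+6$, and a divisor argument; your version is a pleasant variant and equally valid.
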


\begin{proof}

According to the horizontal direction, the surface is decomposed into
a completely periodic $3C$-direction (see
Figure~\ref{cap:surface-thurston}). Let us calculate  
the moduli of theses cylinders and show that they are equal. Using
$(L_1)$ and $(L_2)$ in the system~(\ref{eq:linear}), one checks that the moduli are
$$
\cfrac{H_3}{V_3}=V_1+V_3=\alpha,\quad \cfrac{H_3+2H_1+nH_2}{V_1}=\alpha,
\quad \text{and} \quad
\cfrac{nH_2}{V_2}=n^2\cfrac{V_1+V_2}{V_2}=\alpha.
$$
All the moduli are equal, thus $(X_n,\omega_n)$ is stabilized by the
parabolic element $T_{\alpha}=\left( \begin{smallmatrix} 1 & \alpha \\ 0 & 1
\end{smallmatrix} \right)$.

Now let us consider the vertical direction; this is a completely
  periodic {\em $2T_\fix2C$-direction}. The two simple cylinders have
  moduli $V_1/H_1=1/2$. The two others cylinders have moduli
$(V_1+V_3)/H_3=1$ and $n(V_1+V_2)(H_2)=1$. Thus the parabolic element 
$U_{1/2}=\left( \begin{smallmatrix} 1 & 0 \\ \frac1{2} & 1 \end{smallmatrix} 
\right)$ stabilises $(X_n,\omega_n)$.

Therefore $(X_n,\omega_n)$ is a bouillabaisse surface, since it has 
two transverse parabolic directions. Moreover 
$(X_n,\omega_n)$ is stabilised by the hyperbolic element $TU$ which
has the trace $\frac1{2}(4+\alpha)$. The trace field of
$(X_n,\omega_n)$ is then $\Q(\alpha_n)$. The next lemma shows that
$P_n$ is irreducible over $\Q$ if $n \geq 3$ which will end the proof.
\end{proof}

\begin{Lemma} \label{irreducibility}
Let $n$ be a positive integer.
The polynomial $P_n(X) = X^3-2(n^2+3)X^2+(7n^2+4)X-4n^2$ is
irreducible over $\Q$ if $n > 2$.
\end{Lemma}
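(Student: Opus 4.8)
The plan is to exploit the fact that $P_n$ is monic of degree $3$ with integer coefficients, hence reducible over $\Q$ if and only if it has a rational root, and any such root is then an integer dividing the constant term $-4n^2$. So it suffices to show that $P_n$ has no integer root once $n > 2$. Suppose $r \in \Z$ with $P_n(r) = 0$; collecting the terms carrying the factor $n^2$ rewrites this as
\[
n^2\,(2r^2 - 7r + 4) \;=\; r\,(r^2 - 6r + 4).
\]

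First I would dispose of the small values of $r$ by hand. If $r \le 0$ then $2r^2 - 7r + 4 > 0$ while $r(r^2 - 6r + 4) \le 0$, forcing $n^2 \le 0$, absurd. If $r = 1$ the relation gives $n^2 = 1$ and if $r = 2$ it gives $n^2 = 4$; both are excluded by $n > 2$. (These are precisely the two sporadic factorizations, occurring for $n = 1$ and $n = 2$, which is why the hypothesis is stated the way it is.)

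For $r \ge 3$ one has $2r^2 - 7r + 4 \ge 1 > 0$, so one may divide: the Euclidean division of $r(r^2-6r+4)$ by $2r^2-7r+4$ gives
\[
4n^2 \;=\; 2r - 5 \;-\; \frac{27r - 20}{2r^2 - 7r + 4}.
\]
The subtracted fraction is strictly positive, so $4n^2 < 2r - 5$; hence if $n \ge 3$ any integer root satisfies $r > 2n^2 + \tfrac{5}{2} \ge 20.5$, i.e.\ $r \ge 21$. For such $r$ one has $2r^2 - 7r + 4 > r^2$, so $0 < (27r-20)/(2r^2-7r+4) < 27/r < 4$, and feeding this back in yields $2r - 9 < 4n^2 < 2r - 5$. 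This pins $r$ down to the two values $r = 2n^2 + 3$ or $r = 2n^2 + 4$.

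It remains to eliminate these two candidates. Substituting $r = 2n^2 + 3$ forces the fraction above to equal $1$, i.e.\ $r^2 - 17r + 12 = 0$; substituting $r = 2n^2 + 4$ forces it to equal $3$, i.e.\ $3r^2 - 24r + 16 = 0$. The discriminants are $241$ and $384$, neither of which is a perfect square, so neither equation has an integer solution — contradiction. The only step that is not purely mechanical is the squeezing argument bounding the fraction, which collapses the infinitely many a priori possibilities for $r$ down to two; everything else is a finite elementary verification.
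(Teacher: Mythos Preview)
Your argument is correct. Both you and the paper start from the same observation: $P_n$ is monic cubic with integer coefficients, so irreducibility over $\Q$ is equivalent to having no integer root. From there the two proofs diverge.

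The paper simply evaluates $P_n$ at six well-chosen points, namely $0,1,2,3,2n^2+2,2n^2+3$, and observes that for $n>2$ the signs alternate so that by the intermediate value theorem the three real roots lie in the open intervals $(0,1)$, $(2,3)$, $(2n^2+2,\,2n^2+3)$; none of these contains an integer, and the proof is finished in a few lines. Your approach is instead purely algebraic: you rearrange $P_n(r)=0$ as $n^2(2r^2-7r+4)=r(r^2-6r+4)$, perform a Euclidean division to isolate the ``fractional'' part of $4n^2$, and use a size estimate to squeeze any large integer root into $\{2n^2+3,\,2n^2+4\}$, which you then rule out by discriminant checks. This is longer and more computational, but it has the virtue of not requiring one to guess the right evaluation points in advance; it would also adapt more mechanically to similar families. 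The paper's proof, by contrast, is shorter and more transparent once one knows where the roots sit, and it yields the explicit location of all three roots (not just the non-existence of integer ones), information that is in fact used elsewhere in the paper.
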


\begin{proof}
Since $P_n$ is monic, 
it is irreducible over $\Q$ if and only if it is
irreducible over $\Z$. As it is of degree 3, it is irreducible over
$\Z$ if and only if it has no root in $\Z$. We check the following
equalities:

$$\left\{\begin{array}{lcl}
P_n(0) & = & -4n^2 \\
P_n(1) & = & -1+n^2 \\
P_n(2) & = & -8 +2n^2 \\
P_n(3) & = & -n^2 - 15 \\
P_n(2n^2+2) & = & -8 -14n^2- 2n^4 \\
P_n(2n^2+3) & = & -15-11n^2+2n^4,
\end{array}\right. \quad \text{thus for $n \geq 2$} \quad
%
\left\{\begin{array}{lcl}
P_n(0) & < & 0 \\
P_n(1) & > & 0 \\
P_n(2) & > & 0 \\
P_n(3) & < & 0 \\
P_n(2n^2+2) & < & 0 \\
P_n(2n^2+3) & > & 0.
\end{array}\right.
$$
By the intermediate value theorem, if $n > 2$, $P_n$ has 3 real roots
$\alpha$, $\beta$, $\gamma$ satisfying:
\begin{equation}
\label{eq:encadrement:racines}
\left\{\begin{array}{l}
0  <  \gamma < 1 \\
2  < \beta < 3 \\
2n^2+2 < \alpha < 2n^2+3 
\end{array}\right.
\end{equation}
Consequently, $P_n$ has no root in $\Z$ thus it is irreducible over $\Z$.
\end{proof}

\begin{Remark}
The polynomials $P_1$ and $P_2$ are not irreducible; indeed, 
if $n=1,2$ we have $P_1 = (X-1)(X^2-7X+4)$ and $P_2 = (X-2)(X^2-12X+8)$.
More precisely, $(X_1,\omega_1)$ is an unramified cover of a 
Veech surface belonging to $\mathcal H(2)$.
\end{Remark}

\subsection{Another completely periodic 
(but non-parabolic) direction on $(X_n,\omega_n)$}

\begin{Proposition}
\label{prop:periodic:nonparabolic}
Let $\Theta$ be the slope $V_2/H_2=n/\alpha$. Then the direction
$\Theta$ on $(X_n,\omega_n)$ is completely periodic {\em
  $2T_\fix2C$-direction} on $(X_n,\omega_n)$. Moreover, if $n>1$,
this direction is not parabolic.
\end{Proposition}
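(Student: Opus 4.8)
\emph{Plan.} The idea is to read the decomposition of $(X_n,\omega_n)$ in the direction $\Theta$ directly off Figure~\ref{cap:surface-thurston}, using the eigenvector relations $(L_1)$--$(L_3)$ in \eqref{eq:linear} (equivalently $P_n(\alpha)=0$, i.e.\ $\alpha^3=2(n^2+3)\alpha^2-(7n^2+4)\alpha+4n^2$) to verify that the segments we draw really are saddle connections. First I would fix coordinates: in the horizontal direction $(X_n,\omega_n)$ is the $3C$--decomposition of Lemma~\ref{lm:bouillabaisse}, and in particular the ``middle'' horizontal cylinder has height $V_2$ and circumference $nH_2$, its two boundary components being subdivided by the gluings into horizontal saddle connections of length $H_2$. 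The slope $\Theta=V_2/H_2$ is precisely the slope of a diagonal of an $H_2\times V_2$ rectangle, so the ray of slope $\Theta$ issued from a zero on the bottom of this cylinder rises by exactly $V_2$ over a horizontal run $H_2$ and lands on a zero on the top.

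Next I would follow that ray, together with the analogous rays issued from the other zeros and from the Weierstrass points lying on the boundaries of the cylinders of the vertical $2T_\fix2C$--decomposition, across the remaining rectangles and check that it closes up; each closure condition is a linear identity among the $V_i$ and $H_j$ which, after substituting the definitions of $V_i$ and using $(L_1)$--$(L_3)$, reduces to the cubic relation for $\alpha$. Keeping track of the hyperelliptic involution (whose fixed points are marked in the figure) this yields four homologous saddle connections in the direction $\Theta$. Cutting along them leaves two \emph{simple} cylinders interchanged by the involution, while the complement consists of two one--holed tori, each \emph{fixed} by the involution: this is exactly Figure~\ref{cap:conf:list}e, a $2T_\fix2C$--direction. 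For $n>2$ the a priori list of configurations is already narrowed by Theorem~\ref{cylclassH} and Corollary~\ref{cor:direction}, and in all cases the explicit picture (the number of $\Theta$--saddle connections and the positions of the Weierstrass points) singles out configuration e.

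For complete periodicity I would argue as follows. The direction $\Theta$ is periodic, since it contains the two simple cylinders just exhibited; and inside each of the two fixed tori it contains a cylinder fixed by the involution --- by the Claim in the proof of Lemma~\ref{slit-torus} (cf.\ Figure~\ref{cap:slit-torus}) when $\Theta$ is irrational on that torus, and trivially when $\Theta$ is rational on it. Since $(X_n,\omega_n)$ is a bouillabaisse surface (Lemma~\ref{lm:bouillabaisse}), it is stabilized by a pseudo-Anosov diffeomorphism and already carries a completely periodic direction (the horizontal one), so Theorem~\ref{completely-periodic} applies to this involution--fixed cylinder and shows that $\Theta$ is completely periodic. (Alternatively one can finish decomposing the two tori by hand, again using \eqref{eq:linear}.)

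Finally, for non-parabolicity I would compute the moduli of the cylinders in the direction $\Theta$ from the decomposition obtained above; they are explicit rational expressions in $\alpha=\alpha_n$ and $n$, and one checks that for $n>1$ some ratio of two of them is a non-constant rational function of $\alpha_n$, hence irrational because $\alpha_n$ has degree $\ge 2$ over $\Q$ (degree $3$ when $n>2$, by Lemma~\ref{irreducibility}). Thus $\Theta$ is not parabolic, and for $n>2$ Lemma~\ref{commensur} then automatically upgrades this to pairwise incommensurability of the moduli --- the input needed for Theorem~\ref{bouillaorbitclosure}. I expect the main obstacle to be the bookkeeping in the middle steps: checking that the candidate slope--$\Theta$ segments close up into saddle connections and assembling the pieces into the $2T_\fix2C$ pattern, and then exhibiting one pair of $\Theta$--moduli with irrational ratio; both are elementary but require care with the explicit gluings of Figure~\ref{cap:surface-thurston}.
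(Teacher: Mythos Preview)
Your overall architecture matches the paper's: follow the slope-$\Theta$ rays from the zeros, use the eigenvector relations $(L_1)$--$(L_3)$ to check the closure conditions (the paper does this by tracking the intersection abscissae $x_i$ and showing $x_i<H_1+H_3+nH_2$ for $i<n$ while $x_n=3H_1+2H_3+nH_2$), read off the $2T_\fix2C$ configuration, then compute one modulus ratio and verify it is irrational for $n>1$. The non-parabolicity step is exactly the paper's: the explicit ratio $r_1=m(C)/m(T_1)$ is computed and shown to lie in $\Q$ only when $n=1$.

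There is one genuine gap, in your complete-periodicity argument. You invoke the Claim from Lemma~\ref{slit-torus} to produce, on a fixed torus where $\Theta$ is irrational, a cylinder ``in the direction $\Theta$'' fixed by the involution. But that Claim does not do this: it produces a cylinder in a direction \emph{transverse} to the slit direction (see Figure~\ref{cap:slit-torus}). If $\Theta$ is irrational on a torus there is simply no $\Theta$-cylinder on that torus, so you cannot feed Theorem~\ref{completely-periodic} this way. Your appeal to Theorem~\ref{completely-periodic} needs a $\Theta$-cylinder fixed by the involution, and the two simple cylinders you have are exchanged.

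The paper closes this gap differently: it observes directly (from the explicit picture, cf.\ Figure~\ref{cap:surface-decomposed}) that the torus $T_1$ is periodic in the direction $\Theta$, and then uses Calta--Smillie (Theorem~\ref{theo:CaSm}): the SAF-invariant vanishes in the direction $\Theta$, it vanishes on the cylinders and on the periodic $T_1$, hence it vanishes on $T_2$, so $T_2$ is periodic as well. Your route via Theorem~\ref{completely-periodic} can be salvaged along the same lines: first check by hand that $T_1$ is $\Theta$-periodic (this is one more closure identity in the $V_i,H_j$, of the same flavour as the ones you already plan to verify); the resulting $\Theta$-cylinder in $T_1$ is fixed by the involution, and then Theorem~\ref{completely-periodic} applies legitimately. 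Either way, the missing ingredient is the explicit periodicity of $T_1$, not the Claim from Lemma~\ref{slit-torus}.
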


\begin{figure}[htbp]

\psfrag{g1}{$\gamma_1$} \psfrag{g2}{$\gamma_2$}
\psfrag{g3}{$\gamma_3$} \psfrag{g4}{$\gamma_4$}
\psfrag{g5}{$\gamma_5$} \psfrag{g6}{$\gamma_6$}

\begin{center}
 \includegraphics[width=7cm]{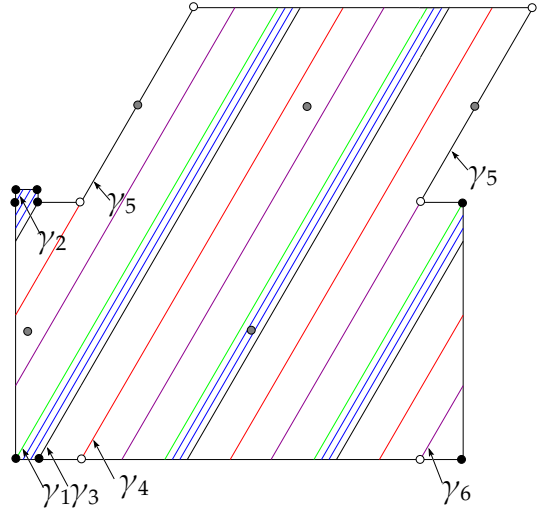}
\end{center}
\caption{
\label{fig:surface-thurston:nonparabolic}
Another completely periodic (non parabolic) {\em
  $2T_\fix2C$-direction} on $(X_n,\omega_n)$.
}
\end{figure}

We decompose the proof into three steps. We first prove that the direction 
$\Theta$ is completely periodic. We then compute the moduli of the
cylinders and finally we prove that one ratio is not rational.

\begin{Lemma}
The direction $\Theta$ is a completely periodic {\em
$2T_\fix2C$-direction}.
\end{Lemma}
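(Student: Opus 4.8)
The plan is to argue directly from the explicit polygonal model of $(X_n,\omega_n)$ drawn in Figure~\ref{cap:surface-thurston}, whose edge lengths are the entries of the Perron--Frobenius eigenvectors $V=(V_1,V_2,V_3)$ and $H=(H_1,H_2,H_3)$. The direction $\Theta$ of slope $V_2/H_2=n/\alpha$ is the direction of the vector $w:=(H_2,V_2)$. First I would locate, on this model, the six segments $\gamma_1,\dots,\gamma_6$ of Figure~\ref{fig:surface-thurston:nonparabolic} and check that each is a saddle connection in the direction $\Theta$: one reads off the holonomy of each $\gamma_i$ as an explicit integer combination of the $V_i$ and $H_i$, and verifies that it is a positive rational multiple of $w$. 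These identities are immediate from the defining relations $(L_1),(L_2),(L_3)$ of the linear system~\eqref{eq:linear} together with the positivity of $V$ and $H$ established above. One must also check that no singularity of $(X_n,\omega_n)$ lies in the interior of any $\gamma_i$, so that the $\gamma_i$ are genuine (minimal) saddle connections rather than concatenations.

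Next I would cut $(X_n,\omega_n)$ along $\gamma_1,\dots,\gamma_6$ and follow the side identifications to see that the complement falls into exactly four pieces: two of them are simple cylinders in the direction $\Theta$ (each bounded by a single $\gamma_i$ on either side), and the other two become flat tori once their slit boundaries are reglued. The hyperelliptic involution of $(X_n,\omega_n)$ --- which acts on the model as the rotation by $\pi$ about the marked Weierstrass points --- exchanges the two simple cylinders and preserves each of the two tori. By the very definition of a $2T_\fix 2C$-direction (and comparison with Figure~\ref{cap:conf:list}(e)), this exhibits $\Theta$ as a $2T_\fix 2C$-direction.

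It remains to upgrade this to \emph{complete} periodicity, i.e.\ to show that $\Theta$ is also periodic on each complementary torus $T_i=\C/\Lambda_i$; equivalently that $\Lambda_i$ contains a nonzero vector parallel to $w$. Writing out generators of $\Lambda_i$ from the model as integer combinations of the $V_i,H_i$, this reduces to a single $\Q$-linear relation, which follows from the minimal equation $\alpha^3=2(n^2+3)\alpha^2-(7n^2+4)\alpha+4n^2$. (Alternatively, since $(X_n,\omega_n)$ is pseudo-Anosov with completely periodic horizontal and vertical directions and $n/\alpha\in K$, Theorem~\ref{theo:CaSm} gives that the SAF-invariant vanishes in the direction $\Theta$, which one can feed into the analysis of the complementary tori exactly as in the proof of Theorem~\ref{completely-periodic}.) I expect the main obstacle to be the geometric bookkeeping rather than the algebra: correctly identifying the $\gamma_i$ on the picture, confirming they have no interior singularities, and matching up the complementary pieces together with the action of the hyperelliptic involution. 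All the computations collapse to the eigenvector equations~\eqref{eq:linear} and the cubic for $\alpha$, but a mislabelled picture would yield the wrong combinatorial type.
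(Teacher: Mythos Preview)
Your outline matches the paper's approach: locate the six saddle connections $\gamma_i$ in the direction $\Theta$, cut, identify the pieces as two exchanged simple cylinders and two fixed tori, and then argue complete periodicity. However, you underestimate the step ``no singularity lies in the interior of any $\gamma_i$''. For $\gamma_4$ (and its companions) this is the crux and does \emph{not} collapse to the relations~\eqref{eq:linear} and the cubic for $\alpha$. The segment $\gamma_4$ crosses the long horizontal cylinder of height $V_1$ exactly $n$ times; one must show that the successive intersection points $x_i$ with the top of that cylinder satisfy $x_i<H_1+H_3+nH_2$ for $i=1,\dots,n-1$ and land exactly at the zero when $i=n$. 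The landing equality is indeed an eigenvector identity, but the intermediate inequalities amount to $\alpha/n^2-1<n/(n-1)$, which requires the size estimate $\alpha<2n^2+3$ extracted from the sign pattern of $P_n$ (as in Lemma~\ref{irreducibility}). So the ``geometric bookkeeping'' hides a genuine inequality that you should flag and prove.

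For the last step, your first option (write down generators of the $\Lambda_i$ and exhibit a lattice vector parallel to $w$) is fine in principle, and the paper in fact does this for $T_1$ directly. Your alternative via the SAF-invariant needs a small extra observation: from $\mathrm{SAF}(X,\omega_n)=0$ and $\mathrm{SAF}=0$ on each cylinder you only get $\mathrm{SAF}(T_1)+\mathrm{SAF}(T_2)=0$, not each summand separately. The paper resolves this by noting that $T_1$ is visibly periodic (hence $\mathrm{SAF}(T_1)=0$), forcing $\mathrm{SAF}(T_2)=0$ and thus periodicity of $T_2$. If you want to invoke the mechanism of Theorem~\ref{completely-periodic} instead, be aware that that theorem is stated for cylinders \emph{fixed} by the involution, whereas here the simple cylinders are exchanged; you would need to adapt the argument to tori rather than cite it verbatim.
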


\begin{proof}
By definition, $\gamma_5$ is closed and the corresponding holonomy vector is
$$
\int_{\gamma_5}\omega_n = \left( 
\begin{array}{c} H_2 \\ V_2 \end{array} \right).
$$
Let us prove that $\gamma_4$ is also closed.
Let $x_i$ be the $x$-coordinates of intersection point of $\gamma_4$
with the top (and bottom) of the horizontal cylinder of height
$V_1$ as illustrated in Figure~\ref{cap:surface-thurston:periodic}.
We will show that $x_i < H_1 + H_3 + n H_2$ for all 
$i=1,\dots,n-1$ and $x_n =3H_1 + 2H_3 + n H_2$ which will ensure that $\gamma_4$ 
is closed. 

\begin{figure}[htbp]
\label{fig:surface-thurston:periodic}
\begin{center}
\psfrag{x1}{$\scriptstyle x_1$}
\psfrag{x2}{$\scriptstyle x_2$}
\psfrag{x0}{$\scriptstyle x_0$}
\psfrag{x3}{$\scriptstyle x_3$} 
 \includegraphics[width=6cm]{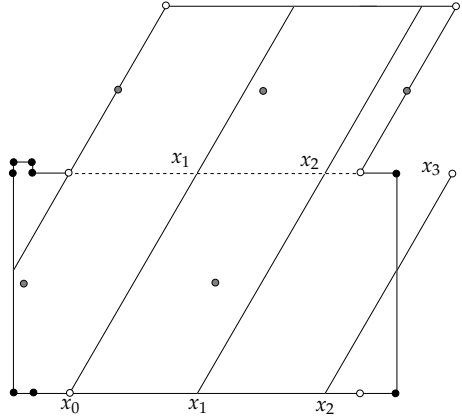}
\end{center}
\caption{
A periodic direction on $(X_3,\omega_3)$ in the direction $\Theta$.
}\label{cap:surface-thurston:periodic}
\end{figure}

Elementary geometry gives 
$\cfrac{V_1}{x_i-x_{i-1}} = \Theta = \cfrac{n}{\alpha}$, which yields  
\begin{equation}
\label{equation:xi}
x_i = \cfrac{\alpha V_1}{n} + x_{i-1} = \cfrac{i\cdot \alpha V_1}{n} + x_0 =
\cfrac{i\cdot \alpha V_1}{n} + H_1 +H_3.
\end{equation}
Let $\Delta_i = x_i - (H_1 + H_3 + n H_2)$. Recall that $nH_2=\alpha V_2$ and 
$V_1 = \left(\cfrac{\alpha}{n^2} -1 \right) V_2$ (thanks to $(L_2)$ in
Equation~(\ref{eq:linear})). Then 
$$
\Delta_i= \cfrac{i\cdot \alpha V_1}{n} - \alpha V_2 = 
\cfrac{\alpha V_2}{n}\left (i\left( \cfrac{\alpha}{n^2} -1 \right) -n \right).
$$

\begin{Claim}

One has
$$
\cfrac{\alpha}{n^2} -1 < \cfrac{n}{n-1}
$$
\end{Claim}

\begin{proof}[Proof of the Claim]
If $n>2$, we already proved that $\alpha < 2n^2+3$ (see
Equation~(\ref{eq:encadrement:racines})). Hence the claim 
follows, once we have shown $\cfrac{2n^2+3}{n^2} -1 = 1 +
\cfrac{3}{n^2} < \cfrac{n}{n-1}$. This is obvious if $n >2$.
The case $n=2$ is checked directly.
\end{proof}

Therefore, if $i \leq n-1$ we have 
$$
\Delta_i \ < \ \cfrac{\alpha V_2}{n}\left ((n-1)\cdot \cfrac{n}{n-1} - n \right) = 0.
$$
To complete the proof, one has to show that $x_n = 3H_1+2H_3+n H_2$. 
Equation~(\ref{equation:xi}) with $i=n$ gives $x_n = \alpha V_1 + H_1 + H_3$. Therefore
\begin{multline*}
x_n - (3H_1+2H_3+n H_2) =\alpha V_1 - 4 V_1 - V_1 - V_3 -n^2(V_1+V_2) = \\
= (5+n^2)V_1 + n^2 V_2 + V_3 - \alpha V_1 = 0 \qquad \textrm{ by } (L_1).
\end{multline*}
Hence $x_n = 3H_1+2H_3+n H_2$ and $\gamma_4$ is closed. In 
addition, we get the holonomy vector associated to $\gamma_4$:
$$
\int_{\gamma_4}\omega_n = \left( 
\begin{array}{c} * \\ (n-1)(V_1+V_2)+V_1 \end{array} \right).
$$
The same calculation shows that $\gamma_1,\gamma_3$ and $\gamma_6$ are 
also closed. It is not difficult to check what is the complement of
these four saddle connections: it gives two tori. Hence the surface
$(X_n,\omega_n)$ is decomposed into a {\em $2T_{fix}2C$-direction} in
the direction $\Theta$ (see Figure~\ref{cap:surface-decomposed}). The
torus $T_1$ is actually periodic. By Theorem~\ref{theo:CaSm} the SAF-invariant
vanishes in this direction and the SAF-invariant is equals to zero on cylinders
$C$ and torus $T_1$. Therefore the SAF-invariant is also equal to zero on $T_2$
and finally $\Theta$ is completely periodic. The lemma is proven.
\end{proof}

\begin{figure}[htbp]
\begin{center}

\psfrag{g1}{$\gamma_1$} \psfrag{g2}{$\gamma_2$}
\psfrag{g3}{$\gamma_3$} \psfrag{g4}{$\gamma_4$}
\psfrag{g5}{$\gamma_5$} \psfrag{g6}{$\gamma_6$}
\psfrag{h1}{$H_1$} \psfrag{h3sn}{$H_3/n$}
\psfrag{C}{$C$} \psfrag{T1}{$T_1$} \psfrag{T2}{$T_2$}

 \includegraphics[width=5cm]{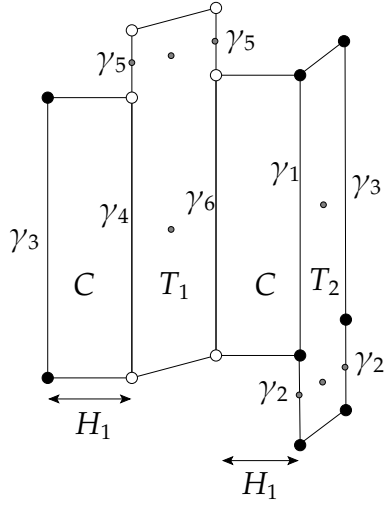}
\end{center}
\caption{
The decomposition of $(X_n,\omega_n)$ in the direction $\Theta$: this 
is a completely periodic {\em $2T_{fix}2C$-direction}.
}\label{cap:surface-decomposed}
\end{figure}

\begin{Lemma}
If $C,T_1,T_2$ denote the cylinder decomposition in the direction 
$\Theta$, then the ratio of the moduli of $C$ and $T_1$ is 
$$
r_1= \cfrac{m(C)}{m(T_1)} = \cfrac{(\alpha-n)(2n^3-n^2-(n-1)\alpha)}{2n(\alpha -n^2)}.
$$
\end{Lemma}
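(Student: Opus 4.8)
The plan is to compute the two moduli directly from the explicit flat picture of the decomposition of $(X_n,\omega_n)$ in the direction $\Theta$ recorded in Figure~\ref{cap:surface-decomposed}, and then to simplify the quotient $r_1 = m(C)/m(T_1)$ using the eigenvector relations $(L_1),(L_2),(L_3)$ of~\eqref{eq:linear} together with the cubic $\alpha^3 = 2(n^2+3)\alpha^2-(7n^2+4)\alpha+4n^2$. This is exactly the mechanism already used to compute the horizontal moduli in Lemma~\ref{lm:bouillabaisse}; the only genuinely new feature is that $\Theta$ is not a coordinate direction, so a modulus must be computed as an honest conformal invariant (flat area divided by the square of the circumference), not as a naive ratio of $x$- and $y$-extents.

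Concretely, I would proceed in four steps. First, reusing the elementary bookkeeping of the successive crossing abscissae $x_i$ from the proof that $\Theta$ is completely periodic (in particular the winding count $k=n-1$ before a core curve closes up), I would write down the holonomy vectors $\int_{\gamma_j}\omega_n$ of all six closed saddle connections $\gamma_1,\dots,\gamma_6$ and identify which of them bound the simple cylinder $C$ and which bound the torus $T_1$; recall that $T_1$ is periodic in the direction $\Theta$, so it is a genuine metric cylinder and $m(T_1)$ is well defined. Second, for each of $C$ and $T_1$ I would record the holonomy $u$ of the core curve and the holonomy $v$ of an arc crossing the cylinder exactly once, so that the circumference is $|u|$, the area is $|u_xv_y-u_yv_x|$, and hence the modulus equals $(u_xv_y-u_yv_x)/(u_x^2+u_y^2)$. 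Third, I would assemble
$$
r_1=\frac{m(C)}{m(T_1)}=\frac{(u^C_xv^C_y-u^C_yv^C_x)\,\big((u^{T_1}_x)^2+(u^{T_1}_y)^2\big)}{(u^{T_1}_xv^{T_1}_y-u^{T_1}_yv^{T_1}_x)\,\big((u^C_x)^2+(u^C_y)^2\big)}
$$
as a polynomial expression in $V_1,V_2,V_3,H_1,H_2,H_3$. Fourth, I would substitute the values of the $H_i$ and $V_i$ in terms of $\alpha$ and $n$, use $(L_2)$ in the form $V_1+V_2=\alpha V_2/n^2$ (equivalently $H_2=\alpha V_2/n$, which is why $\Theta$ has slope $n/\alpha$) together with $(L_1)$ and $(L_3)$ to eliminate $V_1$ and $V_3$, and reduce modulo the cubic relation for $\alpha$ until the expression collapses to
$$
r_1=\frac{(\alpha-n)\,\big(2n^3-n^2-(n-1)\alpha\big)}{2n(\alpha-n^2)}.
$$

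The hard part will not be the algebra but the combinatorial step of reading the boundary data off Figure~\ref{cap:surface-decomposed}: correctly pairing the saddle connections bounding $C$ with those bounding $T_1$, getting the signs and the winding count $k=n-1$ right in the crossing vectors, and checking that the transverse arcs chosen for $v^{C}$ and $v^{T_1}$ really cross each cylinder once. Once that is pinned down, the remaining computation is routine substitution and cubic reduction, of the same flavour as (though bulkier than) the verification of the three horizontal moduli in Lemma~\ref{lm:bouillabaisse}, and presents no conceptual obstacle.
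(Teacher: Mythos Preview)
Your plan is correct and is essentially the paper's own approach: read off the circumference and transverse width of $C$ and $T_1$ directly from the flat picture in the direction $\Theta$, then simplify the ratio using the eigenvector relations $(L_1),(L_2),(L_3)$. The only streamlining the paper adds is that, since all core curves lie in the same direction $\Theta$, the common factor $\sqrt{1+1/\Theta^2}$ in the circumferences and the common trigonometric factor in the heights cancel in $r_1$; so one may work with the $y$-components of the holonomies (giving $|\gamma_4|/(|\gamma_4|+|\gamma_5|)=(\alpha-n)/\alpha$) and with the raw horizontal extents $H_1$ and $h(T_1)=(H_1+H_3+nH_2)-x_{n-1}$ rather than your full cross-product formula, which shortens the algebra somewhat.
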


\begin{proof}
According to Figure~\ref{cap:surface-decomposed}, we can compute 
the circumference and the height of the cylinders $C$ and $T_1$.
The circumference of $C$ is just $|\gamma_4|$ where  
$$
|\gamma_4|^2 = ((n-1)(V_1+V_2)+V_1)^2\left( 1+\frac1{\Theta^2} \right),
$$
and the height is $H_1$.

The circumference of $T_1$ is just $|\gamma_4|+|\gamma_5|$ where  
$$
|\gamma_5|^2 = V^2_2\left( 1+\frac1{\Theta^2} \right),
$$
and the height is $h(T_1) = (H_1+H_3+nH_2)-x_{n-1}$ (see
Figure~\ref{fig:surface-thurston:nonparabolic}).
This can be simplified to
$$
h(T_1)= nH_2 - \cfrac{(n-1)\alpha V_1}{n} = \alpha V_2 - 
\cfrac{(n-1)\alpha}{n} \left( \cfrac{\alpha}{n^2}-1 \right) V_2
= \cfrac{\alpha V_2}{n^3}(2n^3-n^2-(n-1)\alpha).
$$
Taking the ratio we get:
$$
r_1 = \cfrac{\cfrac{|\gamma_4|}{H_1}}{\cfrac{|\gamma_4|+|\gamma_5|}{h(T_1)}} = 
\cfrac{|\gamma_4|}{|\gamma_4|+|\gamma_5|} \cdot \cfrac{h(T_1)}{H_1}.
$$
The quantities on the right hand side can be simplified to
$$
\cfrac{|\gamma_4|}{|\gamma_4|+|\gamma_5|} = 
\cfrac{(n-1)(V_1+V_2)+V_1}{(n-1)(V_1+V_2)+V_1+V_2} = 
\cfrac{\frac1{n}\alpha V_2 - V_2}{\frac1{n}\alpha V_2} = \cfrac{\alpha-n}{\alpha}
$$
and 
\begin{equation*}
\cfrac{h(T_1)}{H_1} = \cfrac{\frac1{n^3}\alpha V_2 \left(2n^3-n^2-(n-1)\alpha\right)}{2V_1} = 
 \cfrac{\alpha \left(2n^3-n^2-(n-1)\alpha\right)}{2n(\alpha -n^2)}.
\end{equation*}
Plugging this
into the previous equation, this gives
\begin{equation}\label{r1}
r_1 = \cfrac{(\alpha-n)(2n^3-n^2-(n-1)\alpha)}{2n(\alpha -n^2)}
\end{equation}
which is the desired equality.

\end{proof}

\begin{Lemma}
The following are equivalent:
\begin{enumerate}
\item $r_1 \in \Q$.
\item $r_1=\frac1{2}$.
\item $n=1$.
\end{enumerate}
\end{Lemma}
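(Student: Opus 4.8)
The plan is to establish the cycle of implications $(3)\Rightarrow(2)\Rightarrow(1)\Rightarrow(3)$, of which only the last has any content. The implication $(2)\Rightarrow(1)$ is immediate. For $(3)\Rightarrow(2)$ I would simply substitute $n=1$ into formula~\eqref{r1}: the factor $2n^3-n^2-(n-1)\alpha$ becomes $1$, while $\alpha-n$ and the denominator become $\alpha-1$ and $2(\alpha-1)$. Since $\alpha=\alpha_1>1$ is the largest root of $P_1$, one has $\alpha-1\neq0$, and cancelling it gives $r_1=\tfrac12$.

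For $(1)\Rightarrow(3)$ I would argue the contrapositive: if $n\geq2$ then $r_1\notin\Q$. First clear denominators in~\eqref{r1}, writing $r_1=N(\alpha)/D(\alpha)$ with
$$
N(X)=(X-n)(2n^3-n^2-(n-1)X)=(1-n)X^2+(2n^3-n)X+(n^3-2n^4),\qquad D(X)=2n(X-n^2).
$$
Note that $D(\alpha)\neq0$: since $P_n(n^2)=n^4(1-n^2)\neq0$ for $n\geq2$, the integer $n^2$ is not a root of $P_n$, so $\alpha\neq n^2$. Now assume $r_1=c\in\Q$. Then $Q(X):=N(X)-cD(X)\in\Q[X]$ vanishes at $\alpha$ and has leading coefficient $1-n\neq0$, so $\deg Q=2$.

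The argument then splits according to $[\Q(\alpha):\Q]$. If $n\geq3$, Lemma~\ref{irreducibility} gives that $P_n$ is irreducible over $\Q$, so $\alpha$ has degree $3$ and cannot be a root of a nonzero polynomial of degree $2$ over $\Q$, contradicting $\deg Q=2$. If $n=2$, then $P_2=(X-2)(X^2-12X+8)$ and $\alpha=\alpha_2$ is a root of $X^2-12X+8$, which is irreducible over $\Q$ (its discriminant $112$ is not a square); hence $Q$, a degree-$2$ rational polynomial vanishing at $\alpha_2$, must be a scalar multiple of $X^2-12X+8$. With $n=2$ one computes $Q(X)=-X^2+(14-4c)X+(16c-24)$, so the scalar is $-1$; matching the coefficient of $X$ then forces $c=\tfrac12$, while matching the constant term forces $c=1$, a contradiction. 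In either case $r_1\notin\Q$, so $(1)$ implies $n=1$, which closes the cycle.

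I expect the main obstacle to be the case $n=2$, where $P_n$ is reducible and the clean "degree $3$" argument is unavailable; it is handled by the explicit comparison of the coefficients of $Q$ against the quadratic factor of $P_2$, which is routine.
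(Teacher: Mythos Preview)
Your proof is correct and follows essentially the same route as the paper: clear denominators in \eqref{r1} to get a polynomial relation of degree~$2$ in~$\alpha$ with rational coefficients, and use the degree of~$\alpha$ over~$\Q$ to force the coefficients to vanish. The paper simply writes the resulting three equations ``in the basis $\{1,\alpha,\alpha^2\}$'' and reads off $n-1=0$ and $r_1=\tfrac12$.

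The one genuine difference is that you are more careful about the case $n=2$. The paper's phrasing tacitly assumes that $\{1,\alpha,\alpha^2\}$ is $\Q$-linearly independent, which by Lemma~\ref{irreducibility} is only justified for $n>2$; for $n=2$ one has $\alpha^2=12\alpha-8$ and the coefficient-vanishing argument is not available as stated. Your separate treatment of $n=2$, comparing $Q$ against the irreducible factor $X^2-12X+8$ of $P_2$ and obtaining the inconsistent pair $c=\tfrac12$, $c=1$, fills this gap. Your explicit verification of $(3)\Rightarrow(2)$ and of $D(\alpha)\neq0$ is also a small improvement in completeness over the paper's write-up.
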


\begin{proof}
If $r_1 \in \Q$, using equation \eqref{r1}, we get in the basis
$\left\{1,\alpha,\alpha^2\right\}$:
$$
\left\{ \begin{array}{ll}
n^3(2n-1-2r_1) & = 0 \\
n(2n^2-1-2r_1)   & = 0 \\
n-1      & = 0
\end{array} \right.
$$
The solution gives $r_1=\frac1{2}$ and $n=1$. The lemma is proven.
\end{proof}

\section{Surfaces with a 
$2T_\fix2C$-direction and covering 
constructions} \label{existdirection}

The locus ${\mathcal D} \subset \LLL$ of translation surfaces having
a completely periodic direction of type $2T_\fix 2C$ has proven to be 
useful above.
Contrary to genus two, where pairs of saddle connections exchanged by the hyperelliptic involution {\it always} exist (\cite{Mc3}), ${\mathcal D}$ is not equal
to $\LLL$.

In order to show this, we consider coverings
$\pi: X \to Y$ of a flat surface $(Y,\eta)$ in $\HHH(2)$. 
Such a covering is unramified and has degree two.
Given $Y$, the covering $\pi$ is determined by the choice
of $\zeta \in H^1(Y,\Z/2)$ or of a line bundle $\MMM$ with
$\MMM^2 = \OOO_Y$. Recall (e.g.~\cite{KZ}) that
on $Y$ we have a natural spin structure (i.e.\ a square root
of the canonical bundle) given by $\OOO_Y(P)$,
if $Z(\eta) = 2P$. Since $P$ is a Weierstrass point, 
this spin structure has odd parity. $\OOO_Y(P) \otimes \MMM$
defines another spin structure on $Y$. By \cite{At71}, the space ${\mathcal C}$
of coverings $\pi$ has two components, distinguished by the
parity  of this spin structure $h^0(Y,\OOO_Y(P) \otimes \MMM) \mod 2$.
Let $Z(\omega) = 2P_1+2P_2$ be the preimage of $Z(\eta)$.
Since 
$$ H^0(X,\OOO_X(P_1+P_2)) \cong H^0(Y,\OOO_Y(P) \otimes \MMM) 
+ H^0(Y,\OOO_Y(P)),$$
this parity is given by the usual parity of the spin structure on $X$. 
Consequently, precisely one of the components of ${\mathcal C}$ lies in 
$\Omega M_3(2,2)^\odd $ and in fact automatically in the hyperelliptic
locus $\LLL$. We denote this component by ${\mathcal C}^\odd$.

\begin{Theorem}
The locus ${\mathcal D}$ is strictly contained in $\LLL$. 
More precisely, the intersection 
${\mathcal D}^c \cap {\mathcal C}^\odd$
is non-empty and strictly contained in  ${\mathcal C}^\odd$.
It consists of orbits of Veech surfaces.
\end{Theorem}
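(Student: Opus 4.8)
The plan is to study the locus $\mathcal D$ by passing to the quotient map defining a point of $\mathcal C^\odd$. Such a point is a connected unramified double cover $\pi\colon X\to Y$ with $(Y,\eta)\in\HHH(2)$, together with the class $\zeta\in H^1(Y,\Z/2)\setminus\{0\}$ determining $\pi$; the hyperelliptic involution $\tau_X$ of $X$ commutes with the deck involution $\iota$ and descends to the hyperelliptic involution $\tau_Y$ of $Y$. First I would establish a dictionary between completely periodic $2T_\fix 2C$-directions on $X$ and two-cylinder decompositions of $Y$. Recall that a completely periodic direction of $Y\in\HHH(2)$ decomposes $Y$ into one or two cylinders, and that a cylinder $C$ of $Y$ lifts to a single cylinder of $X$ (of twice the circumference) if $\langle\zeta,[\mathrm{core}(C)]\rangle=1$, and to two disjoint parallel copies if $\langle\zeta,[\mathrm{core}(C)]\rangle=0$. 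If $X$ has a completely periodic $2T_\fix 2C$-direction, the induced decomposition of $X$ is $\iota$- and $\tau_X$-invariant, and $\iota$ preserves the two $\tau_X$-orbits of cylinders (the pair of simple cylinders and the two tori); counting cylinders forces $Y$ to have exactly two cylinders in this direction, hence $\iota$ interchanges the two simple cylinders and interchanges the two tori, so both core curves of the two-cylinder decomposition of $Y$ pair trivially with $\zeta$. Conversely, using Theorem~\ref{cylclassH} and the constraints on Weierstrass points in Lemma~\ref{cor:topology} (a simple cylinder carries no Weierstrass point, a $\tau_X$-fixed cylinder carries two), one checks that the lift of a completely periodic two-cylinder direction of $Y$ both of whose core curves pair trivially with $\zeta$ is a completely periodic $2T_\fix 2C$-direction on $X$. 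Call such a decomposition of $Y$ \emph{adapted} to $\zeta$. Since the two cores of a two-cylinder decomposition are disjoint they span an isotropic plane in $H_1(Y;\Z/2)$, which equals its own symplectic perpendicular; so, writing $z\in H_1(Y;\Z/2)$ for the Poincar\'e dual of $\zeta$, the decomposition is $\zeta$-adapted if and only if $z$ lies in the span of its two core classes, and $(X,\omega)\in\mathcal D$ if and only if $z\in\bigcup_D\mathrm{span}_{\Z/2}\{a_D,b_D\}$, the union over all two-cylinder decompositions $D$ of $Y$ with core classes $a_D,b_D$.

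The heart of the argument is the claim that \emph{a non-Veech surface $(Y,\eta)\in\HHH(2)$ always admits a $\zeta$-adapted two-cylinder decomposition}, for every admissible $\zeta$. Granting this, the theorem follows quickly: a connected unramified cover of a Veech surface is again a Veech surface (a finite-index subgroup of $\Aff(Y)$ fixes $\zeta$ and lifts), so $(X,\omega)\notin\mathcal D$ forces $(Y,\eta)$, hence $(X,\omega)$, to be Veech; and since $\mathcal D$, $\mathcal D^c$ and $\mathcal C^\odd$ are $\GL^+_2(\R)$-invariant and Veech surfaces have closed orbits, $\mathcal D^c\cap\mathcal C^\odd$ is a union of closed $\GL^+_2(\R)$-orbits of Veech surfaces. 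To prove the claim I would invoke the genus two theory of Calta and McMullen (\cite{C},~\cite{Mc3}): a non-Veech surface in $\HHH(2)$ has infinitely many completely periodic two-cylinder directions, and — because such a surface carries no rigid structure and one can slide and twist a cylinder decomposition to produce new two-cylinder directions — the classes $\{a_D,b_D\}$ realized modulo $2$ are rich enough that $\bigcup_D\mathrm{span}\{a_D,b_D\}$ contains $z$ for every class $\zeta$ defining a cover in $\mathcal C^\odd$. In other words, no single non-zero $\zeta$ can fail to be adapted to all two-cylinder decompositions of a non-Veech surface.

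For the two non-emptiness assertions one argues on examples. To see $\mathcal D\cap\mathcal C^\odd\neq\emptyset$ (equivalently $\mathcal D^c\cap\mathcal C^\odd\subsetneq\mathcal C^\odd$), start from any $(Y,\eta)\in\HHH(2)$ with a two-cylinder decomposition and choose $\zeta\neq0$ pairing trivially with both of its core curves and lying in the odd-parity component; its cover lies in $\mathcal D\cap\mathcal C^\odd$. For $\mathcal D^c\cap\mathcal C^\odd\neq\emptyset$, take an explicit Veech surface $(Y,\eta)\in\HHH(2)$ — a square-tiled surface, or a primitive Veech surface such as the golden $L$ — list its finitely many $\Aff(Y)$-orbits of two-cylinder decompositions, record the resulting finite set of core-curve spans modulo $2$, and choose $\zeta\neq0$ lying in the odd-parity component with $z\notin\bigcup_D\mathrm{span}\{a_D,b_D\}$; the corresponding cover then has no $\zeta$-adapted two-cylinder decomposition, hence no completely periodic $2T_\fix 2C$-direction, so it lies in $\mathcal D^c\cap\mathcal C^\odd$ and is a Veech surface. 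Existence of a suitable $\zeta$ in the correct parity component is a finite linear-algebra verification over $\Z/2$, the parity being read off from $h^0(Y,\OOO_Y(P)\otimes\MMM)\bmod 2$ as in the discussion preceding the theorem. Strict containment of $\mathcal D$ in $\LLL$ is then immediate, since $\mathcal C^\odd\subset\LLL$.

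The step I expect to be the main obstacle is the highlighted claim in the second paragraph: controlling, uniformly over all non-Veech surfaces in $\HHH(2)$, the homology classes modulo $2$ of the cores of their two-cylinder decompositions well enough that every cohomology class defining an odd cover falls into the span of some such pair. This forces one to use the Calta--McMullen classification of $\GL^+_2(\R)$-orbit closures in $\HHH(2)$ in earnest — treating the primitive eigenform loci and the dense-orbit case, and exploiting that in $\HHH(2)$ one can generate new two-cylinder decompositions with controlled cores by shearing and twisting. By contrast, the covering dictionary of the first paragraph, the reduction to Veech surfaces, and the finite verifications on explicit examples should be routine.
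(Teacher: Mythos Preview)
Your route is genuinely different from the paper's, and your dictionary has a real gap.

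\textbf{The gap.} The converse direction of your dictionary fails: a two-cylinder decomposition of $Y$ with both core curves pairing trivially with $\zeta$ does lift to a four-cylinder decomposition of $X$, but the lift may be of type $2T2C_\fix$ (Figure~\ref{cap:conf:list}f) rather than $2T_\fix 2C$. Which of the two occurs depends on whether $\tau_X$ exchanges or fixes the two lifts of the simple cylinder of $Y$, equivalently on whether the two Weierstrass points sitting on the core of that simple cylinder lift to Weierstrass points of $X$---a finer invariant of $\zeta$ than merely $z\in\mathrm{span}\{a_D,b_D\}$. Your parenthetical ``a simple cylinder carries no Weierstrass point'' is not correct: a simple cylinder \emph{fixed} by $\tau_X$ carries two on its core, and this is exactly the $2T2C_\fix$ situation. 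So even granting your main claim about non-Veech surfaces, you would only produce on $X$ a completely periodic direction of one of the two types, and $\mathcal D$ requires the specific one.

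\textbf{The paper's approach.} The paper avoids both this issue and the obstacle you flagged. It does not lift completely periodic two-cylinder directions of $Y$ at all. Instead it invokes Calta's result that a non-Veech $(Y,\eta)\in\HHH(2)$ has a direction decomposing into a cylinder and a \emph{non}-periodic torus; the double cover then carries an irrational $2T2C_\fix$-direction, and the resplitting of Figure~\ref{cap:2T2Cfixeq}(b) (from the proof of Lemma~\ref{cpcase2T2Cfix}) exhibits a $2T_\fix 2C$-direction on $X$. The rest is soft structure: since the four saddle connections of a $2T_\fix 2C$-splitting are homologous, $\mathcal D$ is open, so $\mathcal D^c\cap\mathcal C^\odd$ is a proper closed $\SL_2(\R)$-invariant subset of $\mathcal C^\odd$, and McMullen's classification~\cite{Mc3} in genus two forces it to consist of Veech orbits. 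Non-emptiness of $\mathcal D^c\cap\mathcal C^\odd$ is a single explicit six-square origami whose three cusps are checked by hand. In particular no uniform control of mod-$2$ core classes across all non-Veech surfaces---the obstacle you identified---is needed; the openness argument together with~\cite{Mc3} replaces it.
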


\begin{proof}
First we show that the intersection is not empty. Consider
a square-tiled covering of $6$ squares as Figure~\ref{cap:origami}.

\begin{figure}[htbp]
\begin{center}
 \includegraphics[width=5cm]{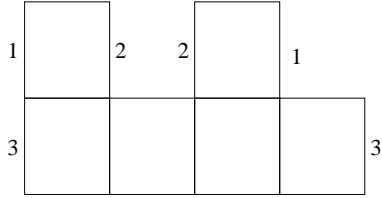}
\caption{A square-tiled surface in $\LLL$ without $2T_\fix 2C$-direction}
\label{cap:origami}
\end{center}
\end{figure}
Horizontal sides are glued by vertical translations.
Using e.g.\ \cite{Sc04}, one checks that the affine group of this
square-tiled covering has three cusps, given by the horizontal,
and vertical direction and by the direction of slope $1$.
None of these directions is a $2T_\fix 2C$-direction.

By \cite{C}, a surface in $\HHH(2)$ which is not Veech has a direction where it splits into a 
cylinder and a non-periodic torus. 
Consequently, a covering of a generic  surface in $\HHH(2)$
yields a surface in $\LLL$ with an irrational 
$2T2C_\fix$-direction. Such a surface also has a
$2T_\fix 2C$-direction by Figure~\ref{cap:2T2Cfixeq} (b).

Since the saddle connections involved in a $2T_\fix 2C$-direction
are homologous, ${\mathcal D}$ is open. Consequently,
${\mathcal D}^c \cap {\mathcal C}^\odd$ is closed and
$\Sl$-invariant. The last claim now follows from \cite{Mc3}. 
\end{proof}


\end{document}